\theoremstyle{plain}
\newtheorem{thm}{Theorem}[section]
\newtheorem{lemma}[thm]{Lemma}
\newtheorem{cor}[thm]{Corollary}
\newtheorem{example}{Example}
\newtheorem{fact}[thm]{Fact}
\theoremstyle{definition}
\newtheorem{rk}{Remark}
\theoremstyle{remark}
\numberwithin{equation}{section} 
\numberwithin{defn}{section}
\numberwithin{rk}{section}
\newcommand{\rsim}[1]{\textcolor{cyan}{#1}}
\newcommand \revi[1]{{\color{red}#1}}
\newcommand\norm[1]{\left\|#1\right\|}
\newcommand\ip[2]{\left(#1,#2\right)}
\newcommand\abs[1]{\lvert#1\rvert}
\newcommand{\tforall}{\text{ for all }}
      \theoremstyle{plain}
      \newtheorem{assumption}{Assumption}
\title{\sc A Multiscale Method for the Heterogeneous Signorini Problem}
\author{Xin Su\thanks{Department of Mathematics, Texas A\&M University, College Station, TX 77843, USA},~ Sai-Mang Pun\thanks{Department of Mathematics, Texas A\&M University, College Station, TX 77843, USA}
}
\begin{document}
\maketitle

\begin{abstract}
    In this paper, we develop a multiscale method for solving the Signorini problem with a heterogeneous field. The Signorini problem is encountered in many applications, such as hydrostatics, thermics, and solid mechanics. 
    It is well-known that numerically solving this problem requires a fine computational mesh, which can lead to a large number of degrees of freedom. 
    The aim of this work is to develop a new hybrid multiscale method based on the framework of the generalized multiscale finite element method (GMsFEM).
    The construction of multiscale basis functions requires local spectral decomposition.
    Additional multiscale basis functions related to the contact boundary are required so that 
    our method can handle the unilateral condition of Signorini type naturally. A complete analysis of the proposed method is provided and a result of the spectral convergence is shown. Numerical results are provided to validate our theoretical findings. 
\end{abstract}

\noindent\textbf{Keywords: }multiscale method, variational inequality, hybrid formulation, unilateral condition

\section{Introduction}
In this paper, we develop a multiscale method for Signorini’s problem in heterogeneous media. The Signorini problem is a classical unilateral problem, which was first studied by Fichera in the theory of elasticity \cite{fichera1964problemi} and later, Lions and Stampacchia \cite{lions1967variational} built the foundations of the theory. 
The Signorini problem occurs in many applications in science and engineering, including hydrostatics, thermics, and solid mechanics. We refer the readers to \cite{haslinger1996,kinderlehrer2000introduction} for a comprehensive review of the main unilateral contact models. It is well-known that the unilateral problem is characterized by a variational inequality set on a closed convex functional cone. 
There are a lot of works in discussing how to impose the unilateral conditions based on the well-posedness of the discrete formulation and the accuracy of the approximation algorithms (see \cite{ haslinger1996,tremolieres2011numerical,kikuchi1988contact,haslinger1981contact}).  
In  \cite{bend}, the authors revisited three different types of hybrid finite element methods for the Signorini problem. In our paper, we choose to use a hybrid variational formulation and apply the unilateral conditions to the displacement and the stress on the contact zone separately.

Model reduction technique for solving standard PDE in variational equality format has been extended to variational inequality recently. In \cite{haasdonk2012reduced}, the authors generalized the reduction technique in  \cite{rozza2008reduced}, which constructs the reduced basis by combining the greedy algorithm and a posterior error indicator, to solve parametrized variational inequalities. 
Applications to contact mechanics, optimal control, and some improvements based on this method have been proposed in \cite{benaceur2020reduced,zhang2016slack,negri2013reduced,balajewicz2016projection} and the references therein. 
Another family of model reductions applied to variational inequalities is based on the proper orthogonal decomposition (POD) methodology (see \cite{fauque2018hybrid,krenciszek2014model} and the references therein). 
In \cite{fauque2018hybrid}, the authors proposed an extension of the hyper-reduction method based on a reduced integration domain to frictionless contact problems which are written in mixed formulations. 
To be specific, they applied the POD method to reduce the number of the primal bases (for the displacement) and built a reduced integration domain based on the discrete empirical interpolation method (DEIM) to naturally reduce the number of the dual bases (for the contact forces).

However, many problems arising from physics and engineering applications involve multiple scales feature. 
In this research, we aim at approximating the solution of a high-contrast multiscale Signorini problem. 
The solution technique for problems involving multiple scales and high contrast requires high resolution. 
This results in fine-scale problems with high degrees of freedom, which are expensive to solve.
To reduce the computational cost, we need some types of model reduction. 
Our approach here uses the Generalized Multiscale Finite Element Method (GMsFEM), which was recently introduced in \cite{efendiev2013generalized}. 
The GMsFEM is a systematic technique for constructing multiscale basis functions, 
which are designed to capture the multiscale features of the solution. 
The multiscale basis functions contain information about the scales that are smaller than the coarse scale related to the basis functions (see \cite{efendiev2013generalized}).  
The GMsFEM is widely used in solving problems whose variational forms are of the types of equations. 
One may see \cite{chung2015mixed,chung2014generalized,chung2016generalized,gao2015generalized,efendiev2014generalized} and the references therein for more details. 

The main contribution of our work is as follows.
We develop a novel multiscale method based on the GMsFEM for solving the Signorini problem for the first time. 
Moreover, the work presented here can serve as a foundation for more general study of multiscale method for variational inequalities. 
The construction of multiscale space starts with a collection of snapshots. Then, a spectral decomposition of the snapshot space is performed to reduce the dimension of the fine-scale space.
Specifically, we consider a class of local spectral problems motivated by the analysis and we select the dominant modes corresponding to the first few smallest eigenvalues. 
To ensure the positivity of the displacement along the contact boundary, we introduce a set of additional multiscale basis functions, which are harmonic extension with appropriate essential boundary condition.
A rigorous analysis of the proposed method is presented.
In particular, we show a result of spectral convergence depending on the smallest eigenvalue whose eigenfunction is excluded in the local multiscale space.
We present numerical results for two different heterogeneous permeability fields and show that we can get accurate approximations with fewer degrees of freedom using the proposed method. The first case we consider contains small inclusions while the other one contains several high contrast channels.
Both examples show a fast convergence of the GMsFEM.

The paper is organized as follows. In Section \ref{sec:prelim}, we state the problem and the finite-element approximation on a fine grid. In Section \ref{sec:gmsfem}, the construction of multiscale basis functions is presented. In Section \ref{sec:analysis}, we present the error analysis and show the convergence for our method. In Section \ref{sec:numerics}, we present numerical experiments to show the effectiveness and the efficiency of our method. The paper ends in Section \ref{sec:conclusion} with a conclusion.

\section{Preliminaries} \label{sec:prelim}
In this section, we will present the model problem considered in this work. 
In Section \ref{subsec:problem}, we present the Signorini problem with a high-contrast permeability field. 
In Section \ref{subsec:functional}, we introduce the functional spaces that would be used in this work. 
For numerical approximation, we define the fine and coarse grids in Section \ref{subsec:partition}. 
This section ends with a fine-grid approximation scheme in Section \ref{subsec:finegrid}.

\subsection{Problem Setting} \label{subsec:problem}
Let $\Omega\subset \mathbb{R}^d$ ($d \in \{ 2, 3\}$) be a bounded polygonal Lipschitz domain. We consider the following Signorini problem over the computational domain $\Omega$: For a given source function $f \in L^2(\Omega)$, find a function $u$ such that it solves the following problem
\begin{equation}
\label{eqn:model}
\begin{cases}
    -\nabla \cdot (\kappa(x)\nabla u)=f &\text{ in } \Omega  , \\
     u =0 &\text{ on }\Gamma_D,\\
     \kappa(x)\nabla u \cdot \boldsymbol{ n}=0 &\text{ on } \Gamma_N,\\
     u\geq 0, ~ \kappa(x)\nabla u\cdot \boldsymbol{ n}\geq 0, ~ u (\kappa(x)\nabla u\cdot \boldsymbol{ n})=0 &\text{ on }\Gamma_C.
\end{cases}
\end{equation}
Here, $\kappa(x)$ is a heterogeneous permeability field with high contrast and $\boldsymbol{ n}$ is the unit outward normal vector field to the boundary $\partial \Omega$. We assume that there exist two positive constants $\kappa_0$ and $\kappa_1$ such that $0<\kappa_0\leq\kappa(x)\leq \kappa_1<\infty$  {for almost all $x \in \Omega$ and the ratio $\kappa_1/ \kappa_0$ could be very large}.
The boundary $\partial \Omega$ is a union of three non-overlapping portion $\Gamma_D,$ $\Gamma_N$, and $\Gamma_C$. 
The vertices of $\Gamma_C$ are $\{\boldsymbol{c}_1, \boldsymbol{c}_2\}$ and those of $\Gamma_D$ are $\{\boldsymbol{c}_1^\prime, \boldsymbol{c}_2^\prime\}$. See Figure \ref{fig:Triangulation} for a graphical illustration. 
The Neumann and unilateral boundary conditions are understood in the sense of distributions on $\partial \Omega$. 

\subsection{Functional Framework}\label{subsec:functional}

The functional framework well suited to solve the Signorini problem  \eqref{eqn:model} with heterogeneous permeability field consists in working with the subspace $H^1_0(\Omega, \Gamma_D)$ of the Sobolev space $H^1(\Omega)$ made of functions that vanish at $\Gamma_D$. That is, 
$$ H_0^1(\Omega, \Gamma_D) := \left \{ v \in H^1(\Omega): v|_{\Gamma_D} = 0 \right \}.$$

At the outset of the variational formulation of this problem, the unilateral contact condition enters the variational formulation through the introduction of the closed convex set 
\begin{equation}
\label{Q5}
K(\Omega):=\left\{v \in H_{0}^{1}\left(\Omega, \Gamma_{D}\right):  v|_{ \Gamma_C} \geq 0 \text { a.e. in } \Gamma_C \right\}.
\end{equation}
The primal variational principle for \eqref{eqn:model} produces the variational
inequality:  find $u\in K(\Omega)$ such that 
\begin{equation}
\label{Q6}
a(u, v-u) \geq L(v-u) \quad \tforall  v \in K(\Omega).
\end{equation}
The bilinear form $a: H^1(\Omega) \times H^1(\Omega) \to \mathbb{R}$ and the linear functional $L \in (H^1(\Omega))'$ are defined by
$$
\begin{aligned} 
a(u, v) &:=\int_{\Omega} \kappa(x)\nabla u \cdot\nabla v ~dx   \quad&&\text{ for all }  u, v \in H^1(\Omega),
\\ L(v) &:=\int_{\Omega} f v d x  \quad &&\text{ for all }  v \in H^1(\Omega).
\end{aligned}
$$
Let us first introduce the notations for norms and inner products that will be used in this work. We denote the weighted $L^2$ inner product and the associated norm on $\Omega$ as
$$\ip{u}{v}_{L^2_\kappa(\Omega)}:=\int_\Omega \kappa uv ~ dx, ~~\norm{v}_{L^2_\kappa(\Omega)} :=\sqrt{\ip{v}{v}_{L^2_\kappa(\Omega)}}.$$ 

We remark that we write $\norm{\cdot}_{L^2(\Omega)}$ and $(\cdot,\cdot)$ to denote the $L^2$ norm and inner product when $\kappa \equiv 1$. 
Next, we denote the weighted $H^1$ and the semi-norm weighted $H^1$ norm as

$$ |v|_{H_\kappa^1(\Omega)} := \sqrt{a(v,v)},~~ \norm{v}_{H^1_\kappa(\Omega)} := \left ( \norm{v}_{L^2_\kappa(\Omega)}^2 + |v|_{H_\kappa^1(\Omega)} ^2 \right  )^{1/2}.$$
Here, we denote $\alpha = (\alpha_1, \cdots, \alpha_d) \in \mathbb{N}^d$ a multi-index with $\abs{\alpha} := \sum_{i=1}^d \alpha_i$ and 
$$ D^{\alpha} v := \frac{\partial^{\abs{\alpha}} v}{\partial_{x_1}^{\alpha_1} \cdots \partial_{x_d}^{\alpha_d}}.$$
We define $$L^2_\kappa(\Omega):=\left \{u:\Omega\to \mathbb{R}  \text{ measurable such that } \|u\|_{L^2_\kappa(\Omega)}<\infty \right \},$$ 
$$H^1_\kappa(\Omega):=\left \{u\in L_\kappa^2(\Omega): \partial^\alpha_x u\in L^2_\kappa(\Omega) \tforall \alpha \text{ with }  |\alpha| \leq 1  \right \},$$ 
On the other hand, one has to introduce appropriate functional spaces in order to describe the unilateral boundary condition defined on $\Gamma_C$. We define 
$$L_\kappa^2(\Gamma_C):=\left \{v:\Gamma_C\to \mathbb{R} \text{ measurable such that }  \int_{\Gamma_C}\kappa v^2~ d\Gamma<\infty \right \}$$ endowed with the norm $\|v\|_{L_\kappa^2(\Gamma_C)}:=\left(\int_{\Gamma_C}\kappa v^2~ d\Gamma\right)^{\frac{1}{2}}$ and 
$$H^1_\kappa(\Gamma_C):=\left \{ v\in L_\kappa^2(\Gamma_C) \text{ with } \frac{dv}{ds} \in L_\kappa^2(\Gamma_C) \right \}$$ endowed with the norm $\|v\|_{H^1_{\kappa}(\Gamma_C)}:=\left(\int_{\Gamma_C} \kappa v^2 +\kappa (\frac{dv}{ds})^2~d\Gamma\right)^{\frac{1}{2}} $ for the case of $d=2$. For three dimension, see Appendix \ref{3dbdnorm}.
The space $H^{\frac{1}{2}}_\kappa(\Gamma_C)$ is defined via the real interpolation between $L^2_\kappa(\Gamma_C)$ and $H^1_\kappa(\Gamma_C)$. 
\begin{rk}
Here, $\frac{dv}{ds}$ is the derivative of $v$ along the boundary $\Gamma_C$ since we can view  $v$ restricted on $\Gamma_C$ as a function of a single variable $s$.
\end{rk}

The space $H^{-\frac12}_\kappa(\Gamma_C)$ is defined as the dual space of $H^{\frac{1}{2}}_\kappa(\Gamma_C)$ endowed with the norm $$\|v\|_{H^{-\frac{1}{2}}_\kappa(\Gamma_C)}:=\sup_{w\in H^{\frac{1}{2}}_\kappa(\Gamma_C)}\frac{b(v,w)}{~~~~\|w\|_{H^{\frac{1}{2}}_\kappa(\Gamma_C)}}.$$

In order to have weighted Poincar\'{e} inequalities, we need to make the following assumption on $\Omega$ and $\kappa$. 
\begin{assumption}\label{assumption}
(Structure of $\Omega$ and $\kappa$). Let $\Omega$ be a domain with a $C^{1,\alpha}(0<\alpha<1)$ boundary $\partial \Omega $, and $\{\Omega_i\}_{i=1}^m\subset \Omega$ be m pairwise disjoint strictly convex open subsets, each with $C^{1,\alpha}$ boundary $\partial \Omega_i$, and denote $\Omega_0=\Omega\backslash \overline{\cup_{i=1}^m \Omega_i}$. Let the permeability coefficient $\kappa$ be piecewise regular function defined by 
\begin{equation}
    \kappa=\begin{cases}
    \eta_i(x) &\text{ in } \Omega_i,\\
    1 &\text{ in } \Omega_0.
    \end{cases}
\end{equation}

Here, $\eta_i\in C^{\mu}(\overline{\Omega_i})$ with $\mu\in(0,1)$ for $i=1,\cdots, m$. Denote $\eta_{\text{min}}:=\min_i\{\eta_i\}\geq 1$ and $\eta_{\text{max}}:= \max_i\{\eta_i\}$.
\end{assumption}

The following weighted Poincar\'{e} inequality is presented in \cite{pechstein2013weighted} under Assumption \ref{assumption}. One may also refer to \cite{li2019convergence,galvis2010domain}.  In this work, we assume that $\kappa$ is a piecewise constant function.
\begin{thm}
Under Assumption \ref{assumption}, the following weighted Poincar\'{e} inequality holds: 
\begin{eqnarray}
\int_\Omega \kappa \abs{v}^2 dx \leq C_{\text{poin}} \int_\Omega \kappa \abs{\nabla v}^2 dx \quad \tforall v \in K(\Omega),
\label{ineq:wpi}
\end{eqnarray}
where the constant $C_{\text{poin}}>0$ is independent of the coefficient $\kappa$. 
\end{thm}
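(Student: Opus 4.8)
The plan is to prove the estimate subdomain by subdomain, exploiting the piecewise-constant structure of $\kappa$ from Assumption \ref{assumption} together with the Dirichlet condition $v|_{\Gamma_D}=0$ that is built into $K(\Omega)$. First I would split the weighted mass according to the decomposition $\Omega=\Omega_0\cup\bigcup_{i=1}^m\Omega_i$,
\begin{equation*}
\int_\Omega \kappa\,\abs{v}^2\,dx=\int_{\Omega_0}\abs{v}^2\,dx+\sum_{i=1}^m\eta_i\int_{\Omega_i}\abs{v}^2\,dx,
\end{equation*}
and treat the background term and the inclusion terms by different mechanisms, keeping careful track of which constants depend on the values $\{\eta_i\}$ and which depend only on the geometry.

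For the background term I would use that $\Omega_0$ is connected and carries the Dirichlet portion $\Gamma_D\subset\partial\Omega_0$ on which $v$ vanishes; a standard Poincar\'e--Friedrichs inequality on $\Omega_0$ with coefficient one gives $\int_{\Omega_0}\abs{v}^2\le C_0\int_{\Omega_0}\abs{\nabla v}^2$, where $C_0$ depends only on $\Omega_0$ and $\Gamma_D$, and since $\kappa\ge\eta_{\min}\ge1$ this is controlled by $C_0\int_\Omega\kappa\abs{\nabla v}^2$. For each inclusion I would write $v=\bar v_i+(v-\bar v_i)$ with $\bar v_i$ the mean of $v$ over $\Omega_i$. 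The oscillation $v-\bar v_i$ is handled by the Poincar\'e inequality on the convex domain $\Omega_i$, whose constant scales like $\operatorname{diam}(\Omega_i)^2$ and is independent of $\eta_i$; multiplying by $\eta_i$ and using $\eta_i\int_{\Omega_i}\abs{\nabla v}^2\le\int_\Omega\kappa\abs{\nabla v}^2$ then bounds this part by the weighted energy with a purely geometric constant.

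The hard part will be the mean terms $\sum_i\eta_i\abs{\Omega_i}\,\abs{\bar v_i}^2$, since the factor $\eta_i$ must be absorbed into the energy without ever surviving as a free multiple of $\kappa$ over the low-contrast background. The strategy is to transport $\bar v_i$ back to the Dirichlet datum: using a trace inequality on the $C^{1,\alpha}$ interface $\partial\Omega_i$ I would compare $\bar v_i$ with a weighted average over a collar of $\Omega_i$ inside $\Omega_0$, and then chain such comparisons along a path of overlapping subregions joining $\Omega_i$ to $\Gamma_D$, where $v=0$. This is precisely the step where the ordering $\eta_i\ge1$ and the quasi-monotone layout of $\kappa$ are essential, as they are what allow the $\eta_i$-weighted average to be estimated by the $\kappa$-weighted energy along the connecting path with a constant that does not degenerate as the contrast $\eta_{\max}/\eta_{\min}$ grows. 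For this estimate I would invoke the weighted Poincar\'e machinery of \cite{pechstein2013weighted} (see also \cite{li2019convergence,galvis2010domain}) rather than reproving it, since it is the technical heart of the result.

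Finally I would sum the three contributions and collect constants: the resulting $C_{\text{poin}}$ is a finite combination of the Friedrichs constant of $(\Omega_0,\Gamma_D)$, the convex-domain Poincar\'e constants $\operatorname{diam}(\Omega_i)^2$, the trace constants on the interfaces $\partial\Omega_i$, and the combinatorial length of the connecting paths, none of which depends on the values $\eta_i$. I expect the genuine obstacle to be the mean-value transport of the third paragraph, as it is the only place where the high-contrast weight on the inclusions interacts with the unit background, and it is exactly there that the geometric hypotheses of Assumption \ref{assumption}---convexity and smoothness of the $\Omega_i$, and the placement of $\Gamma_D$---cannot be dispensed with.
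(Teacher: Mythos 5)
You should know at the outset that the paper contains no proof of this theorem to compare against: it simply quotes the inequality from \cite{pechstein2013weighted} and moves on. So the step you "defer to the weighted Poincar\'e machinery of \cite{pechstein2013weighted} rather than reproving it" is not a technical lemma inside your argument --- it is the entire content of the theorem, and whether that citation actually applies under Assumption \ref{assumption} is exactly the question. Your first two pieces are correct as stated: the Friedrichs inequality on $\Omega_0$ (where $\kappa\equiv 1$ and $\Gamma_D\subset\partial\Omega_0$) and the Payne--Weinberger bound $\eta_i\int_{\Omega_i}\abs{v-\bar v_i}^2\,dx\leq C\,\mathrm{diam}(\Omega_i)^2\int_{\Omega_i}\kappa\abs{\nabla v}^2\,dx$ on each convex inclusion both go through with purely geometric constants.

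The gap is your third paragraph, and it is not repairable as described, because the claim that "$\eta_i\geq 1$ and the quasi-monotone layout of $\kappa$" enable the mean transport has the direction of quasi-monotonicity backwards. For a Friedrichs-type inequality (zero trace on $\Gamma_D$), transporting the $\eta_i$-weighted mean of an inclusion to $\Gamma_D$ with a contrast-independent constant requires every region along the connecting chain to carry a coefficient bounded below by a fixed multiple of $\eta_i$; here every path from $\Omega_i$ to $\Gamma_D$ exits into the background, where the coefficient drops from $\eta_i$ to $1$, so the configuration of Assumption \ref{assumption} is precisely \emph{not} quasi-monotone with respect to $\Gamma_D$, and the hypothesis $\eta_{\min}\geq 1$ works against you rather than for you (it is the case $\eta_i\leq 1$, or inclusions touching $\Gamma_D$, that the machinery of \cite{pechstein2013weighted} covers). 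The obstruction is concrete: take $m=1$, let $\Omega_1$ be a ball compactly contained in $\Omega$, and let $v$ equal $1$ on $\overline{\Omega_1}$, decay linearly to $0$ across a thin collar inside $\Omega_0$, and vanish elsewhere. Then $v\geq 0$ and $v|_{\Gamma_D}=0$, so $v\in K(\Omega)$, and $\int_\Omega\kappa\abs{v}^2\,dx\geq \eta_1\abs{\Omega_1}$ while $\int_\Omega\kappa\abs{\nabla v}^2\,dx$ is a fixed geometric constant, since $\nabla v$ is supported where $\kappa=1$. Letting $\eta_1\to\infty$ shows that the mean term $\eta_1\abs{\Omega_1}\abs{\bar v_1}^2$ admits no bound by the weighted energy with a constant independent of $\eta_1$; no chaining, trace, or collar argument can circumvent this. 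So the "genuine obstacle" you correctly identified is in fact a counterexample --- both to your proposed step and to the applicability of the cited result under Assumption \ref{assumption} as written --- and a correct proof would have to either assume low-permeability inclusions, assume the inclusions meet $\Gamma_D$, or accept a constant that grows with the contrast.
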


\begin{rk}
Although the weighted Poincar\'{e} inequality holds under Assumption \ref{assumption}, which is more general, we assume that $\kappa$ is a piecewise constant function in this work.
\end{rk}


With the weighted Poincar\'{e} inequality \eqref{ineq:wpi} and appropriate assumption on $\kappa$, it can be shown that $a(\cdot,\cdot)$ is a continuous, coercive, and symmetric bilinear form on $K(\Omega)$. In other words, there is a positive constant $\alpha >0$  such that 
$$ \alpha \norm{v}_{H^1_\kappa(\Omega)}^2 \leq a(v,v) \quad \text{and} \quad \abs{a(u,v)} \leq  \norm{u}_{H^1_\kappa(\Omega)}\norm{v}_{H^1_\kappa(\Omega)}$$
for any $u \in K(\Omega)$ and $v \in K(\Omega)$. 
Applying Stampacchia's theorem \cite{lions1967variational}, the variational inequality \eqref{Q6} has a unique solution in $K(\Omega)$ that depends continuously on the data $f$.

The variational inequality \eqref{Q6} is equivalent to the following constrained minimization problem:  {find $u \in K(\Omega)$} such that 
\begin{equation}
J(u)=\inf _{v \in K(\Omega)} J(v), \quad \text {where} \quad J(v):=\frac{1}{2} a(v,v)-L(v).
\label{eqn:con_j}
\end{equation}
The constrained minimization problem above can be reformulated as a saddle point problem, and hence $u$ is the first component of the saddle point $(u,\phi)$ of the Lagrangian 
\begin{equation}
\label{Q7}
\mathcal{L}(v,\psi):=\frac{1}{2}a(v,v)-L(v)-b(\psi,v)
\end{equation}
defined on $H^1_0(\Omega,\Gamma_D)\times M(\Gamma_C),$ where 
$$
M(\Gamma_C):= \left \{\psi\in H^{-\frac{1}{2}}(\Gamma_C): \psi\geq0 \right \} \quad \text{and} \quad b(\psi,v):=\langle\kappa \psi,v\rangle_{\frac{1}{2},\Gamma_C}.
$$
Here, $\left \langle \cdot, \cdot \right \rangle_{\frac{1}{2}, \Gamma_C}$ denotes the duality pair over $H^{-\frac{1}{2}}(\Gamma_C)$ and $H^{\frac{1}{2}} (\Gamma_C)$. 
The non-negativity of a distribution $\psi\in H^{-\frac{1}{2}}(\Gamma_C)$ is understood in the sense that $b (\psi, \chi)\geq 0$ for any $\chi\in H^{\frac{1}{2}}(\Gamma_C)$ with $\chi\geq 0$. 

The properties of saddle point problem \cite{haslinger1996} leads to the following hybrid variational system: find $(u,\varphi)\in H^1_0(\Omega, \Gamma_D)\times M(\Gamma_C)$ such that
\begin{eqnarray}
\begin{aligned}
a(u, v)-b(\varphi, v) &=L(v) &\quad \tforall v \in H_{0}^{1}\left(\Omega, \Gamma_{D}\right),  \\
b(\psi-\varphi, u) &\geq 0 &\quad \tforall \psi \in M(\Gamma_{C}).  
\end{aligned}
\label{Q8}
\end{eqnarray}
This hybrid problem \eqref{Q8} has a unique solution which satisfies the following stability condition
\begin{equation}
\|u\|_{H^1_\kappa(\Omega)}+\|\varphi\|_{H^{-\frac{1}{2}}_\kappa\left(\Gamma_{C}\right)} \leq C\norm{\kappa^{-1/2}f}_{L^{2}(\Omega)}.
\end{equation}
Existence  {and uniqueness} to the hybrid problem \eqref{Q8} as well as equivalence of the constrained minimization problem \eqref{eqn:con_j} and the hybrid formulation \eqref{Q8} {have} been proven, e.g., in Haslinger et al. \cite{haslinger1996}.

\subsection{Partitions of Domain}\label{subsec:partition}
In this section, we introduce partition of domain and boundary for numerical discretization. We start with introducing the notions of fine and coarse grids.

Let $\mathcal{T}^H = \{ K_i \}_{i \in \mathcal{I}}$ be a usual conforming partition of the computational domain $ {\Omega}$, which contains a collection of simplices (triangles, quadrilaterals, or tetrahedrals) with mesh size $H>0$ such that 
$$ H := \max_{i \in \mathcal{I}} \max_{x, y \in K_i} \abs{x-y}_{\mathbb{R}^d}.$$
Here, $\mathcal{I}$ is a general finite index set and $\abs{\cdot}_{\mathbb{R}^d}$ denotes the Euclidean metric in $\mathbb{R}^d$. 
Each coarse element $K \in \mathcal{T}^H$ is further partitioned into a connected union of fine-grid blocks, which are conforming across coarse-grid edges. 
The fine-grid partition is denoted by $\mathcal{T}^h$, which by definition is a refinement of $\mathcal{T}^H$, with mesh size $h$ much less than $H$. 
We denote $\tau$ and $K$ the generic elements in the fine-grid and the coarse-grid partitions, respectively. 
The sets of the grid nodes in $\mathcal{T}^h$ and $\mathcal{T}^H$ are denoted as $\Xi_h$ and $\Xi_H$, respectively. 
Let $N$ be the number of coarse grid nodes in $\Xi_H$. 
We number first the nodes lying on $\Gamma_C$, say from $1$ to $N_0$, and the remaining nodes from $N_0+1$ to $N$. 
We write $\Xi_H:=\{\boldsymbol{a_i}: 1\leq i\leq N\}$. 
For a coarse node $\boldsymbol{a_i} \in \Xi_H$, we define a local domain $\omega_i$ associated with $\boldsymbol{a_i}$ as follows: 
$$ \omega_i := \bigcup \{ K \in \mathcal{T}^H : \boldsymbol{a_i} \in K \}.$$
See Figure \ref{fig:Triangulation} for a graphical illustration of the mesh and coarse neighborhood. 

\begin{figure}[htp!]
\begin{center}
\includegraphics[scale=0.2]{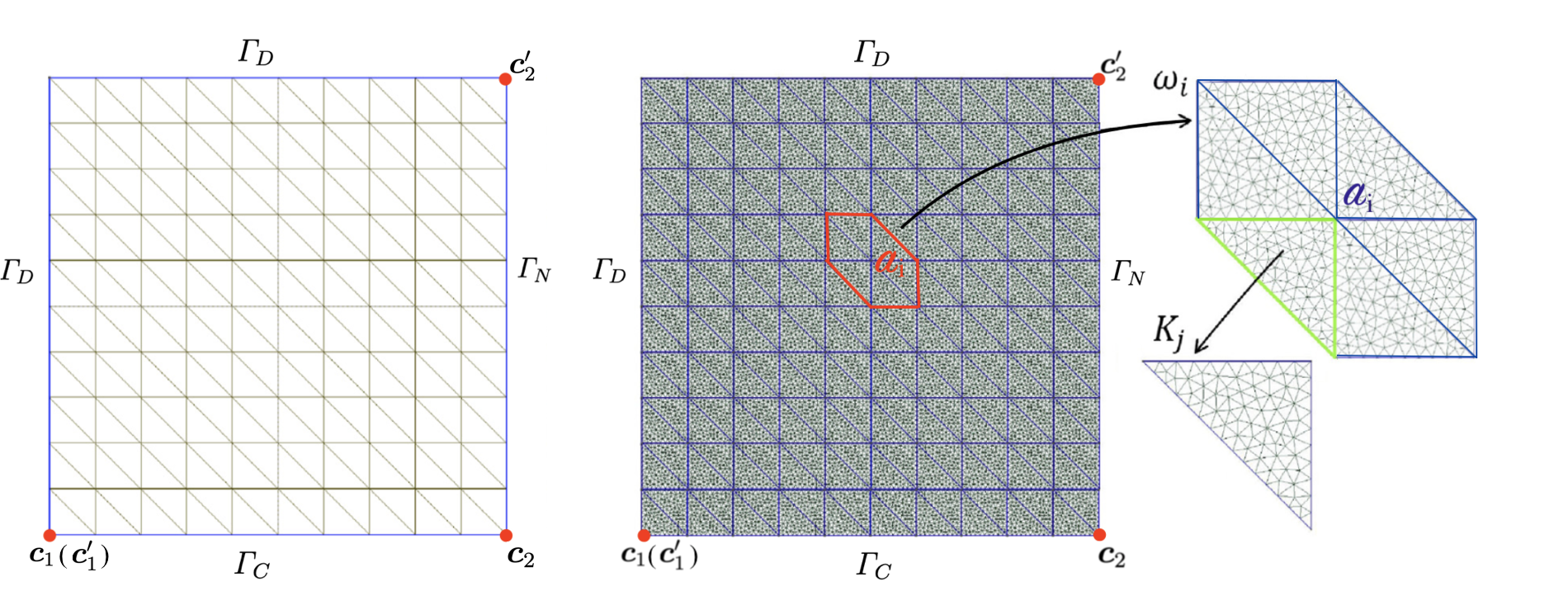}
\caption{Coarse grid and local domain $\omega_i$ with $K_j$}
\label{fig:Triangulation}
\end{center} 
\end{figure}

We assume that the partitions $\mathcal{T}^h$ and $\mathcal{T}^H$ are $\mathcal{C}^0$-regular in the classical sense \cite{ciarlet2002finite}. 
Moreover, $\mathcal{T}^h$ and $\mathcal{T}^H$ are built in such a way that the set $\{\boldsymbol{c}_1, \boldsymbol{c}_2,  \boldsymbol{c}_1^\prime,  \boldsymbol{c}_2^\prime\}$ is included in $\Xi_h$ and $\Xi_H$.  
Due to the $ {\mathcal{C}^0}$-regularity hypothesis, the contact boundary $\Gamma_C$ inherits regular meshes. 
Denote $\mathcal{T}^h_C$ characterized by the subdivision $\left(\boldsymbol{x}_{i,h}^{C}\right)_{1\leq i \leq i^{*}_h}$ inherited from $\mathcal{T}^h$ with 
$\boldsymbol{x}_{1,h}^{C}=\boldsymbol{c}_{1}$, $\boldsymbol{x}_{i_h^{*},h}^{C}=\boldsymbol{c}_{2}$, and $t_{i,h}:=\left(\boldsymbol{x}_{i,h}^{C}, \boldsymbol{x}_{i+1,h}^{C}) \right)_{1 \leq i \leq i^{*}_h-1}$ are its elements. 
Similarly, we denote $\mathcal{T}^H_C$ characterized by the subdivision $\left(\boldsymbol{a_i}\right)_{1\leq i\leq N_0}$ inherited from $\mathcal{T}^H$ with $\boldsymbol{a}_1=\boldsymbol{c}_{1}$, $\boldsymbol{a}_{N_0}=\boldsymbol{c}_{2}$, and $t_{i,H}:=\left(\boldsymbol{a}_{i}, \boldsymbol{a}_{i+1}\right)_{1\leq i \leq N_0-1}$ are its elements. 

\subsection{Fine-grid Approximation}\label{subsec:finegrid}
In this section, we introduce the fine-grid approximation of the hybrid variational formulation \eqref{Q8}. 
For any set $S \subset \overline{\Omega}$, we denote $\mathcal{P}_q (S)$ the set of polynomials of total degree less than or equal to $q$ over the set $S$. 
We introduce the finite dimensional spaces $X_h(\Omega) \subset H^1_0(\Omega,\Gamma_D)$ and $W_h(\Gamma_C)$ such that 
\begin{eqnarray}
\begin{split}
X_h(\Omega)&:=\left\{v_{h} \in \mathscr{C}(\bar{\Omega}): \tforall \tau \in \mathcal{T}^{h},~ v_{h}|_{\tau} \in \mathcal{P}_{1}(\tau),~ v_{h} |_{\Gamma_{D}}=0\right\}, \\
W_{h}\left(\Gamma_{C}\right) &:=\left\{\psi_{h}=v_{h}|_{\Gamma_C}: v_{h} \in X_{h}(\Omega)\right\} \\ &=\left\{\psi_{h} \in \mathscr{C}\left(\bar{\Gamma}_{C}\right): \tforall t \in \mathcal{T}^{h}_{C}, ~ \psi_{h} |_t \in \mathcal{P}_{1}(t)\right\} .
\end{split}
\end{eqnarray}
We enforce strongly the non-negativity condition on $\phi=\nabla u \cdot \boldsymbol{ n}|_{\Gamma_C}$. 
We define 
\begin{equation}
\label{ }
M_h(\Gamma_C):=\{\psi_h\in W_h(\Gamma_C):\psi_h\geq 0\}.
\end{equation}
The fine-grid solution $(u_h, \phi_h)\in X_h(\Omega)\times M_h(\Gamma_C) $  satisfies
\begin{eqnarray}
\begin{split}
a(u_h,v_h)-b(\phi_h,v_h)&=L(v_h) & \quad \tforall v_h\in X_h(\Omega),\\
b(\psi_h-\phi_h, u_h)&\geq 0 & \quad \tforall \psi_h\in M_h(\Gamma_C).
\end{split}
\label{finehybridineq}
\end{eqnarray}

\noindent
{\bf Remark:} 
In order to carry out the numerical analysis of this approximation, let us define the discrete closed convex set
\begin{equation}
\label{ }
K_h(\Omega)=\{v_h\in X_h(\Omega),~   b(\psi_h,v_h)\geq 0, \tforall \psi_h \in  M_h(\Gamma_C)\}.
\end{equation}
It is a nonconforming approximation of $K(\Omega)$, i.e., $K_h(\Omega)\not\subset K(\Omega)$. We have $u_h\in K_h(\Omega)$, and from inequality in \eqref{finehybridineq} we deduce that $b(\phi_h,u_h)=0$. By some manipulations, one can derive that $u_h$ is also a solution of the Signorini problem in a fine-scale variational inequality format: Find $u_h\in K_h(\Omega)$ such that
\begin{equation}
\label{finevarineq}
a(u_h,v_h-u_h)\geq L(v_h-u_h), ~~\tforall v_h\in K_h(\Omega).
\end{equation}
Testing the inequality  \eqref{finehybridineq} with $\psi_h=0$ and $\psi_h=2\phi_h$ leads to the system
\begin{eqnarray}
\begin{split}
a(u_h,v_h)-b(\phi_h,v_h)&=L(v_h) & \quad \tforall v_h\in X_h(\Omega),\\
b(\psi_h,u_h)&\geq 0 & \quad \tforall \psi_h\in M_h(\Gamma_C),\\
b(\phi_h, u_h)&=0. \\
\end{split}
\label{eqn:fine-hy}
\end{eqnarray}
Let $\eta_i$ ($i\in \{1,\cdots, M_1\}$) be the basis for $X_h(\Omega)$ (with $M_1:=\text{dim}(X_h(\Omega))$) and $\xi_j$ ($j\in \{1,\cdots, M_2\}$) be the basis for $M_h(\Gamma_C)$ (with $M_2 := \text{dim}(M_h(\Gamma_C))$). 
We write $u_h=\sum_{i=1}^{M_1} u_i\eta_i$ and $\phi_h=\sum_{j=1}^{M_2} \phi_j \xi_j$ for some coefficients $u_i$ and $\phi_j$. In terms of matrix representations, the above problem \eqref{eqn:fine-hy} can be written as 
\begin{eqnarray}
\begin{split}
M_{\text{fine}} \vec{U}_h-B^T_{\text{fine}}\vec{ \Phi}_h&= \vec{L},\\
B_{\text{fine}}\vec{U}_h& \geq 0, \\
\vec{\Phi}_h^TB_{\text{fine}}\vec{U}_h& =0,\\
\vec{\Phi}_h&\geq0,
\end{split}
\label{FINEeq}
\end{eqnarray}
where the matrices and vectors are defined as follows: 
$$ M_{\text{fine}} := \left [ a(\eta_j, \eta_i) \right ] \in \mathbb{R}^{M_1 \times M_1}, \quad B_{\text{fine}} := \left [ b(\xi_j, \eta_i) \right ] \in \mathbb{R}^{M_2 \times M_1}, $$
$$\vec{L} := \left [ L(\eta_i) \right ] \in \mathbb{R}^{M_1}, \quad \text{and} \quad  \vec{\Phi}_h := \left [ \phi_j \right ] \in \mathbb{R}^{M_2}.$$
We denote a vector is non-negative if each component of the vector is non-negative. 

\section{Multiscale Method} \label{sec:gmsfem}
In this section, we develop the multiscale method for solving the Signorini problem in coarse grid. Our approach is based on the framework of the generalized multiscale finite element methods \cite{efendiev2013generalized}. 
We first construct the multiscale space for approximation. Then, we derive the coarse-grid equation for the Signorini problem. 

\subsection{Multiscale Space}
We present the construction of the multiscale space for coarse-grid approximation.
First of all, we start with an appropriate snapshot space that can capture the fine-scale features of the solutions. Then, we apply a 
space reduction technique to select dominant modes in the snapshot space. 
    
In this work, we simply use the local fine-grid basis functions in $X_h(\Omega)$ as the snapshot space. 
The space reduction technique is based on a carefully designed local spectral problem giving a rapidly decaying residual.  
The resulting reduced space is obtained by the linear span of these dominant modes. 

In practice, 
the dimension reduction and the construction of the multiscale basis functions are performed
locally. 
In particular, the local spectral problem is solved for each local domain $\omega_i$ whose center point is an interior coarse grid node. 
We denote $\eta_j^{(i)}$ basis function of $X_h(\Omega)$ such that $\text{supp}(\eta_j^{(i)}) \subset \omega_i$. 
We define $X_h(\omega_i)$ to be the fine-scale finite element space corresponding to the local domain $\omega_i$ such that
$$X_{h}(\omega_i):=\operatorname{span}\left\{\eta_{j}^{(i)}: 1 \leq j \leq J_{i}\right\},$$
where $J_i$ is the number of fine grid nodes in the local domain $\omega_i$. 
The local spectral problem reads as follows: Find $(v,\lambda)\in X_{h}(\omega_i) \times \mathbb{R}$ such that 
\begin{equation}
\label{spectralpb}
a_i(v,w)=\lambda s_i(v,w) \quad \tforall w\in X_{h}(\omega_i),
\end{equation}
where
$$a_i(v,w):= \int_{\omega_i}  \kappa(x)  \nabla v \cdot \nabla w ~ dx, \quad 
s_i(v,w):= \int_{\omega_i}\kappa(x)   v\cdot  w~ dx.$$
Assume that the eigenvalues of \eqref{spectralpb} are arranged in increasing order
$$ 
0 \leq \lambda_1^{(i)}<\lambda_2^{(i)}<\cdots<\lambda_{J_i}^{(i)},
$$
where $\lambda_k^{(i)}$ denotes the $k$-th eigenvalue for the local domain $\omega_i$. The corresponding eigenvectors are denoted by $Z_k^{(i)}=(Z_{kj}^{(i)})_{j=1}^{J_i}$ where $Z_{kj}^{(i)}$ is the $j$th component of the vector $Z_k^{(i)}$. We select the first $0<l_i\leq J_i$ eigenfunctions to form the multiscale space. 
For simplicity, we assume the eigenvalues are strictly increasing. In practice, if there are multiple eigenvectors corresponding to a specific eigenvalues, then we will take all eigenvectors to be part of the basis functions when the corresponding eigenvalue is in the chosen range of eigenvalues. 

Using the eigenfunctions, multiscale basis functions can be constructed as 
$$
v_k^{(i)}= \sum_{j=1}^{J_i} Z_{kj}^{(i)} \eta_j^{(i)}, \quad k=1,2,\cdots, l_i.
$$

\textbf{Additional multiscale basis for $\Gamma_C$.} Inspired by the technique used in \cite{spiridonov2019generalized} when it deals with the boundary condition, we also need additional multiscale basis to tackle the unilateral boundary condition.
In order to ensure the multiscale solution is positive on $\Gamma_C$ 
we introduce a class of additional basis functions, namely $v_{\text{ex} 1} ^{(i)}$ , which are the solutions of the following Dirichlet problems \eqref{ex1}. 
We remark that the additional basis functions are only needed in the local domains $\omega_i$ whose central vertex $\boldsymbol{a_i}$ lies on $\Gamma_C$.  In other words, $\boldsymbol{a_i}$ is in the set $ \{\boldsymbol{a_i}\}_{1 \leq i \leq N_0}$.

\begin{equation}\label{ex1}
\begin{cases}
-\nabla \cdot (\kappa(x) \nabla v_{\text{ex} 1} ^{(i)})=0, ~~x\in \omega_i\\
v_{\text{ex} 1} ^{(i)}=\delta_i(x),\quad 
x\in \partial \omega_i.
\end{cases}
\end{equation}

Here, $\delta_i(x)$ is the piecewise linear function along the boundary that takes the value $1$ at $\boldsymbol{a_i}$ and zero elsewhere (other coarse grid nodes on boundary $\partial \omega_i$).

The global offline space is then 
$$
X_{H}(\Omega):=\operatorname{span} \left \{ v_k^{(i)}: 1\leq k\leq l_i, 1\leq i \leq N \right \}\bigoplus \operatorname{span} \left \{v_{\text{ex} 1}^{(i)}\\:  1\leq i\leq N_0 \right \}.
$$
For simplicity, we use the single-index notation and we write 
$$
X_{H}(\Omega)=\operatorname{span}\{v_k : 1\leq k\leq M_{\text{off}}\},
$$
where $M_{\text{off}}:=\sum_{i=1}^{N} l_i+ N_0$ is the total number of multiscale basis functions. This space will be used for approximating the solution of the Signorini problem over the coarse grid. 

The approximation of the space of Lagrange multiplier is made by choosing a closed convex cone $M_H(\Gamma_C)$ on the coarse grid such that  

\begin{equation}
\label{ }
M_H(\Gamma_C):=\{\psi_H \in W_H(\Gamma_C):~\psi_H\geq 0\}
\end{equation}
where 
$$ W_H \left ( \Gamma_C \right ) := \left\{\psi_{H} \in \mathscr{C}\left(\bar{\Gamma}_{C}\right): \tforall t \in \mathcal{T}^{H}_{C}, ~ \psi_{H} |_t \in \mathcal{P}_{1}(t)\right\}.$$

\subsection{The Method}
 Once the multiscale spaces are constructed, the multiscale method reads as follows: find $(u_H, \phi_H)\in  X_H(\Omega) \times M_H(\Gamma_C)$ such that 
\begin{eqnarray}
\begin{split}
a(u_H, v_H)-b(\phi_H,v_H)&=L(v_H) & \quad \tforall v_H\in  {X_H(\Omega) },  \\
b(\psi_H-\phi_H, u_H)&\geq 0& \quad \tforall \psi_H\in M_H(\Gamma_C). 
\end{split}
\label{GMeq}
\end{eqnarray}
The above formulation \eqref{GMeq} is equivalent to the following system: 
\begin{eqnarray}
\begin{split}
a(u_H,v_H)-b(\phi_H,v_H)&=L(v_H) &\quad \tforall v_H\in  X_H(\Omega),\\
b(\psi_H,u_H)&\geq 0 & \quad \tforall \psi_H\in M_H(\Gamma_C),\\
b(\phi_H, u_H)&=0. 
\end{split}
\end{eqnarray}
If we write 
\begin{equation}
K_H(\Omega):=\{v_H\in  {X_H(\Omega) }: ~ b(\psi_H,v_H)\geq 0 ~ \tforall \psi_H \in  M_H(\Gamma_C)\},
\end{equation}
the corresponding variational inequality of \eqref{GMeq} is to find 
$u_H\in K_H(\Omega)$ such that
\begin{equation}
\label{GMvarineq}
a(u_H,v_H-u_H)\geq L(v_H-u_H)  \quad \tforall v_H\in K_H(\Omega).
\end{equation}

Notice that each multiscale basis function $v_k$ is eventually represented on the fine grid. 
Therefore, each $v_k$ can be represented by a vector $V_k$ containing the coefficients in the expansion of $v_k$ in the fine-grid basis functions. Then, we define
$$
R_{\text{off}}:=\left [V_1 ~\cdots ~V_{M_{\text{off}}} \right ],
$$
which maps from the multiscale space to the fine-scale space. Similar to \eqref{FINEeq}, the coarse-grid system \eqref{GMeq} can be written in matrix form as follows: 
\begin{eqnarray}
\begin{split}
R_{\text{off}}^TM_{\text{fine}} R_{\text{off}}\vec{U}_H-R_{\text{off}}^TB^T_{\text{fine}}G_H\vec{ \Phi}_H&= R_{\text{off}}^T\vec{L}, \\
G_H^TB_{\text{fine}}R_{\text{off}}\vec{U}_H& \geq 0,\\
\vec{\Phi}_H^TG_H^TB_{\text{fine}}R_{\text{off}}\vec{U}_H& =0,\\
\vec{\Phi}_H&\geq0.
\end{split}
\label{GMmeq}
\end{eqnarray}
Here, $G_H$ is a prolongation operator (see \cite{grossmann2007numerical,briggs2000multigrid}) from $M_H(\Gamma_C) $ into $M_h(\Gamma_C)$ and it is defined in such a way that the values at points on the coarse gird map unchanged to the fine grid, while the values at fine-grid points not on the coarse grid are the linear interpolation of their coarse-grid neighbors. 
Moreover, $\vec{U}_H$ and $\vec{ \Phi}_H$ are vectors of coefficients in the expansions of the solutions $u_H$ and $\phi_H$ in the space $X_H(\Omega) $ and $M_H(\Gamma_C)$, respectively.  

\subsection{Algorithmic Details}
In this section, we discuss some algorithmic details of the proposed multiscale method. 
Following \cite{ulbrich2011semismooth}, the three constraints in \eqref{GMmeq} can be written as a single nonlinear equation by using a NCP-function. Thus, the coarse-grid system \eqref{GMmeq} is equivalent to the following one: 
\begin{eqnarray}
\begin{split}
M_{\text{coarse}}\vec{U}_H-B_{\text{coarse}}^T \vec{ \Phi}_H&= R_{\text{off}}^T\vec{L}, \\
\vec{\Phi}_H-\max\{0,\vec{\Phi}_H-c(B_{\text{coarse}} \vec{U}_H)\}&=0,
\end{split}
\end{eqnarray}
with any $c>0$, $M_{\text{coarse}} :=R_{\text{off}}^TM_{\text{fine}} R_{\text{off}}$, and  $B_{\text{coarse}} :=G_H^TB_{\text{fine}}R_{\text{off}}$. 
In this work, we perform the primal-dual active set strategy \cite{ito, gustafsson2017mixed} to solve this hybrid problem.
See Algorithm \ref{primaldual} for the details. 
In the algorithm, given a matrix $C$ and a row position vector $\boldsymbol{i}$, we denote by $C(\boldsymbol{i},:)$ the submatrix consisting of the rows of $C$ marked by the index vector $\boldsymbol{i}$.  
Similarly, $b(\boldsymbol{i})$ consists of the components of vector $b$ whose indices are in vector $\boldsymbol{i}$. 
In each iteration, the linear system to be solved at Step 9 has the saddle point structure. 
\begin{algorithm}[H] 
\caption{Primal-dual active set method for the hybrid problem}
\begin{algorithmic}[1]
\State  \textbf{Input: }$c$,  $TOL$\label{primaldual}
\State $k=0$; $\vec{\Phi}_H^0=\boldsymbol{0}$
\State Solve $ M_{\text{coarse}} \vec{U}_H^{0}= R_{\text{off}}^T\vec{L} $
\While{$k<1$ or $\|\vec{\Phi}_H^k-\vec{\Phi}_H^{k-1}\|>TOL$} 
        \State $\boldsymbol{s}^k=\vec{\Phi}_H^k-c(B_{\text{coarse}}\vec{U}_H^k)$
        \State Let $\boldsymbol{i}^k$ consists of the indices of the non-positive elements of $\boldsymbol{s}^k$
        \State Let $\boldsymbol{a}^k$ consists of the indices of the positive elements of $\boldsymbol{s}^k$
        \State $\vec{\Phi}_H^{k+1}(\boldsymbol{i}^k)=\boldsymbol{0}$
        \State Solve
       $$
        \left[ \begin{array}{cc}
       M_{\text{coarse}} & -\big(B_{\text{coarse}}\left(\boldsymbol{a}^{k},:\right)\big)^T\\
        -B_{\text{coarse}}\left(\boldsymbol{a}^{k},:\right) & \mathbf{0}
        \end{array}\right]
       \left[\begin{array}{c}
       \vec{U}_H^{k+1} \\ 
       \vec{\Phi}_H^{k+1}\left(\boldsymbol{a}^{k}\right)
       \end{array}\right]
       =\left[\begin{array}{c}
        R_{\text{off}}^T\vec{L} \\ 
       \boldsymbol{0}\left(\boldsymbol{a}^{k}\right)
       \end{array}\right]$$
       \State $k=k+1$
    \EndWhile 
    \State {\bf Output:} the coarse-grid solution $\vec{U}_k^H$.     
 \end{algorithmic}
\end{algorithm}

In practice, instead of checking the accumulating difference term with a given tolerance, we set a maximal number of iterations $\texttt{maxiter}$ such that the number of primal-dual active set iterations may not exceed $\texttt{maxiter}$.

\section{Convergence Analysis} \label{sec:analysis}
In this section, we present a convergence analysis of the proposed multiscale method for the Signorini problem.
\begin{lemma}[ {Error estimate between fine-scale and GMsFEM solutions}] 
Let $u_h\in K_h(\Omega)$ be the fine-grid solution obtained in \eqref{finevarineq} and $u_H\in K_H(\Omega)$ be the  {GMsFEM} solution obtained in \eqref{GMvarineq}. The following estimates holds:
\begin{equation}
\label{ }
\begin{aligned}
\|u_h-u_H\|_{H^1(\Omega)}^2 &\leq C[ 
\inf_{v_H\in K_H(\Omega)} 
(\|u_h-v_H\|_{H^1(\Omega)}^2+\langle \phi_h, v_H\rangle_{\frac{1}{2},\Gamma_C})
+\\
&
\inf_{v_h\in K_h(\Omega)} (\langle \phi_h, v_h-u_H  \rangle_{\frac{1}{2},\Gamma_C})
]\\
\end{aligned}
\end{equation}
\end{lemma}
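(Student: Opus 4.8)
The plan is to prove a Falk--type (variational-crime) estimate, mirroring the technique behind the analogous continuous-to-fine-scale bound. The two ingredients I would exploit are the discrete variational inequalities satisfied by $u_h$ and $u_H$, namely \eqref{finevarineq} and \eqref{GMvarineq}, together with the fine-scale hybrid identity $a(u_h,w)-L(w)=\langle\phi_h,w\rangle_{\frac{1}{2},\Gamma_C}$ valid for every $w\in X_h(\Omega)$, which is exactly the first line of \eqref{eqn:fine-hy} (here $\langle\phi_h,\cdot\rangle_{\frac{1}{2},\Gamma_C}$ is understood as $b(\phi_h,\cdot)$). The structural fact that makes the argument go through is the inclusion $X_H(\Omega)\subset X_h(\Omega)$: since every multiscale basis function is represented on the fine grid, any $v_H\in K_H(\Omega)\subset X_H(\Omega)$ is an admissible test function in that fine-scale identity.

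First I would decompose, for an arbitrary $v_H\in K_H(\Omega)$,
$$a(u_h-u_H,u_h-u_H)=a(u_h-u_H,u_h-v_H)+a(u_h,v_H-u_H)-a(u_H,v_H-u_H),$$
and bound the last term from above using the coarse variational inequality \eqref{GMvarineq} in the form $a(u_H,v_H-u_H)\geq L(v_H-u_H)$. Because $v_H-u_H\in X_H(\Omega)\subset X_h(\Omega)$, the fine-scale identity converts $a(u_h,v_H-u_H)-L(v_H-u_H)$ into $\langle\phi_h,v_H-u_H\rangle_{\frac{1}{2},\Gamma_C}=\langle\phi_h,v_H\rangle_{\frac{1}{2},\Gamma_C}-\langle\phi_h,u_H\rangle_{\frac{1}{2},\Gamma_C}$. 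This already yields the best-approximation term $a(u_h-u_H,u_h-v_H)$ and the desired term $\langle\phi_h,v_H\rangle_{\frac{1}{2},\Gamma_C}$.

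Next I would dispose of the leftover $-\langle\phi_h,u_H\rangle_{\frac{1}{2},\Gamma_C}$. For any $v_h\in K_h(\Omega)$, writing $-\langle\phi_h,u_H\rangle_{\frac{1}{2},\Gamma_C}=\langle\phi_h,v_h-u_H\rangle_{\frac{1}{2},\Gamma_C}-\langle\phi_h,v_h\rangle_{\frac{1}{2},\Gamma_C}$ and using that $\phi_h\in M_h(\Gamma_C)$ with $v_h\in K_h(\Omega)$ gives $\langle\phi_h,v_h\rangle_{\frac{1}{2},\Gamma_C}=b(\phi_h,v_h)\geq 0$, so this piece is dropped. Collecting the estimates gives, for all $v_H\in K_H(\Omega)$ and $v_h\in K_h(\Omega)$,
$$a(u_h-u_H,u_h-u_H)\leq a(u_h-u_H,u_h-v_H)+\langle\phi_h,v_H\rangle_{\frac{1}{2},\Gamma_C}+\langle\phi_h,v_h-u_H\rangle_{\frac{1}{2},\Gamma_C}.$$
It then remains to apply coercivity on the left, Cauchy--Schwarz and Young's inequality to the cross term $a(u_h-u_H,u_h-v_H)$ so as to absorb a multiple of $\|u_h-u_H\|_{H^1(\Omega)}^2$ into the left-hand side, invoke the equivalence of the weighted and unweighted $H^1$ norms (via $\kappa_0\leq\kappa\leq\kappa_1$) to pass to $\|\cdot\|_{H^1(\Omega)}$, and finally take the infimum over $v_H\in K_H(\Omega)$ and over $v_h\in K_h(\Omega)$ separately.

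The main obstacle is the nonconformity of the admissible cones. Since $W_H(\Gamma_C)\subset W_h(\Gamma_C)$ forces $M_H(\Gamma_C)\subset M_h(\Gamma_C)$, the constraint defining $K_h(\Omega)$ is strictly stronger than that defining $K_H(\Omega)$, so $K_H(\Omega)\not\subset K_h(\Omega)$ and the coarse displacement $u_H$ need not satisfy the fine-scale sign constraint; consequently $\langle\phi_h,u_H\rangle_{\frac{1}{2},\Gamma_C}$ does not vanish and cannot simply be discarded. The care required lies precisely in routing this consistency error through a test function $v_h\in K_h(\Omega)$ so that the sign of $b(\phi_h,v_h)$ can be exploited, which is exactly what produces the second infimum term. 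A secondary point worth verifying is that the pairing $\langle\phi_h,\cdot\rangle_{\frac{1}{2},\Gamma_C}$ is well defined on traces of $X_h(\Omega)$ and agrees with $b(\phi_h,\cdot)$, so that the fine-scale identity may be applied verbatim to $v_H-u_H$.
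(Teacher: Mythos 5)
Your proof is correct, and it arrives at exactly the intermediate bound on which the paper's own proof rests, namely
\begin{equation*}
a(u_h-u_H,u_h-u_H)\;\le\; a(u_h-u_H,u_h-v_H)+b(\phi_h,v_H)+b(\phi_h,v_h-u_H)
\quad\tforall v_H\in K_H(\Omega),\ v_h\in K_h(\Omega),
\end{equation*}
but by a different decomposition. The paper follows the classical symmetric Falk template: it tests the fine inequality \eqref{finevarineq} with $v_h$ and the coarse inequality \eqref{GMvarineq} with $v_H$, adds the two, subtracts $2a(u_h,u_H)$, rewrites both residuals $L(u_h-v_H)-a(u_h,u_h-v_H)$ and $L(u_H-v_h)-a(u_h,u_H-v_h)$ through the hybrid identity $a(u_h,w)-L(w)=b(\phi_h,w)$, $w\in X_h(\Omega)$, and finally invokes the saturation condition $b(\phi_h,u_h)=0$ to drop the term $b(\phi_h,u_h)$. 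Your route is asymmetric: you use only the coarse inequality, apply the hybrid identity once (to $v_H-u_H$, legitimate since $X_H(\Omega)\subset X_h(\Omega)$, as you note), and dispose of the leftover $-b(\phi_h,u_H)$ via the sign property $b(\phi_h,v_h)\ge 0$, which is immediate from the definition of $K_h(\Omega)$ together with $\phi_h\in M_h(\Gamma_C)$. This buys a small economy of hypotheses: you never invoke the fine variational inequality nor the complementarity relation $b(\phi_h,u_h)=0$; in fact, the paper's combination of fine VI, hybrid identity, and saturation is exactly equivalent to the sign property you read off directly, so your argument isolates the minimal structural input for the discrete-versus-discrete comparison. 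What the paper's symmetric version buys in exchange is a template that transfers verbatim to situations where neither side's cone embeds in the other's test space (such as the continuous-versus-fine estimate), where both inequalities are genuinely needed. Your closing steps (coercivity, Cauchy--Schwarz, Young, norm equivalence, then separate infima) and your diagnosis of the nonconformity $K_H(\Omega)\not\subset K_h(\Omega)$ as the reason the consistency term cannot be avoided both match the paper.
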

We start with a suitable inf-sup condition for the multiscale finite element space.
To this aim, we assume that the coarse partition $\mathcal{T}^C_H$ is quasi-uniform. 
The $L^2_\kappa(\Gamma_C)$-orthogonal projection $\pi_H: W_h(\Gamma_C)\to W_H(\Gamma_C)$ is defined by 
$$\int_{\Gamma_C}\kappa (\pi_H u_h)\cdot v_H~ d\Gamma=\int_{\Gamma_C} \kappa u_h\cdot v_H ~~d\Gamma \tforall u_h\in W_h(\Gamma_C), v_H\in W_H(\Gamma_C).$$
Since the partition of the domain is quasi-uniform and the permeability field $\kappa$ is piecewise constant, from \cite{bramble1991some} we know that $\pi_H$ has the following properties:

\begin{enumerate}
    \item For any $\mu\in[0,1]$, $\pi_H$ is stable  in the sense that for any $\psi\in W_h(\Gamma_C)$ 
\begin{equation}
\|\pi_H\psi\|_{H^\mu_\kappa(\Gamma_C)}\leq C\|\psi\|_{H^\mu_\kappa(\Gamma_C)}.
\end{equation}
    \item The following approximation results hold: for any $\psi\in W_h(\Gamma_C)$,
\begin{equation}\label{eq3.6}
\|\psi-\pi_H\psi\|_{H^{-\frac12}_\kappa(\Gamma_C)}\leq  H\|\psi\|_{H^{\frac12}_\kappa(\Gamma_C)} \text{ and } 
\|\psi-\pi_H\psi\|_{L^{2}_\kappa(\Gamma_C)}\leq C H\|\psi\|_{H^{1}_\kappa(\Gamma_C).}
\end{equation}
\end{enumerate}
The result for the fractional norm is by Riesz-Thorin Theorem, while the boundedness of the error with respect to $H^{-\frac12}_\kappa(\Gamma_C)$ norm is obtained by duality.
For the inf-sup condition in the fine-scale space, it has been proven in \cite{bend}. We remark that this condition still holds in weighted norm setting due to the properties of the $L^2_\kappa(\Gamma_C)-$orthogonal projection. 
Assume that the triangulation
$\mathcal{T}_h^C$ is quasi-uniform at the fine-scale level, the inf-sup condition in the fine-scale space holds:
\begin{equation}\label{infsupfine}
\inf_{\psi_h\in W_h(\Gamma_C)} \sup_{v_h\in X_h(\Omega)} \frac{b(\psi_h,v_h)}{\|v_h\|_{H^1_\kappa(\Omega)}\|\psi_h\|_{H^{-\frac{1}{2}}_\kappa(\Gamma_C)}}\geq \gamma_0
\end{equation}
where the constant $\gamma_0$ does not depend on $h$.

As a key component in the proof of the convergence result, we first prove the inf-sup condition in the multiscale space. 
\begin{lemma}
Under the assumption that the mesh $\mathcal{T}^C_H$ is quasi-uniform, the following inf-sup condition holds:
\begin{equation}\label{infsupMs}
\inf_{\psi_H\in W_H(\Gamma_C)} \sup_{v_H\in X_H(\Omega)} \frac{b(\psi_H,v_H)}{\|v_H\|_{H^1_\kappa(\Omega)}\|\psi_H\|_{H^{-\frac{1}{2}}_\kappa(\Gamma_C)}}\geq \gamma. 
\end{equation}
The constant $\gamma>0$ does not depend on $H$.
\end{lemma}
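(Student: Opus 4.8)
The plan is to establish \eqref{infsupMs} by a Fortin-type construction that exploits two special features of the multiscale space: the additional contact basis functions and the $L^2_\kappa(\Gamma_C)$-orthogonality of $\pi_H$. The starting point is that $b(\psi_H,v_H)=\int_{\Gamma_C}\kappa\,\psi_H\,v_H\,d\Gamma$ depends on $v_H$ only through its trace on $\Gamma_C$, and that the traces of the additional basis functions $\{v_{\text{ex} 1}^{(i)}\}_{1\le i\le N_0}$ coincide with the coarse nodal hat functions $\{\delta_i|_{\Gamma_C}\}$, which form a basis of $W_H(\Gamma_C)$. Hence, for any prescribed coarse datum $g_H\in W_H(\Gamma_C)$ there exists an element of $X_H(\Omega)$ whose trace on $\Gamma_C$ equals $g_H$, obtained as the matching linear combination of the $v_{\text{ex} 1}^{(i)}$.

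First I would fix $\psi_H\in W_H(\Gamma_C)\subset W_h(\Gamma_C)$ and invoke the fine-scale inf-sup condition \eqref{infsupfine} to produce $v_h\in X_h(\Omega)$ with $b(\psi_H,v_h)\ge \gamma_0\,\norm{\psi_H}_{H^{-\frac12}_\kappa(\Gamma_C)}\,\norm{v_h}_{H^1_\kappa(\Omega)}$. I then define $v_H\in X_H(\Omega)$ to be the combination of additional basis functions whose trace is $\pi_H(v_h|_{\Gamma_C})\in W_H(\Gamma_C)$. The crucial identity is
$$b(\psi_H,v_H)=\int_{\Gamma_C}\kappa\,\psi_H\,\pi_H(v_h|_{\Gamma_C})\,d\Gamma=\int_{\Gamma_C}\kappa\,\psi_H\,(v_h|_{\Gamma_C})\,d\Gamma=b(\psi_H,v_h),$$
where the middle equality holds because $\psi_H\in W_H(\Gamma_C)$ and $\pi_H$ is the $L^2_\kappa(\Gamma_C)$-orthogonal projection onto $W_H(\Gamma_C)$. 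This transfers the lower bound from the fine to the coarse test function with no loss.

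It then remains to bound $\norm{v_H}_{H^1_\kappa(\Omega)}$ from above by $\norm{v_h}_{H^1_\kappa(\Omega)}$. Since $v_H$ is assembled from $\kappa$-harmonic extensions, it has minimal energy among all extensions sharing its trace, so I expect a stable-extension estimate $\norm{v_H}_{H^1_\kappa(\Omega)}\le C\,\norm{\pi_H(v_h|_{\Gamma_C})}_{H^{\frac12}_\kappa(\Gamma_C)}$; chaining this with the $H^{\frac12}_\kappa$-stability of $\pi_H$ (the case $\mu=\tfrac12$ of the stated projection bound) and the weighted trace theorem gives $\norm{v_H}_{H^1_\kappa(\Omega)}\le C\,\norm{v_h}_{H^1_\kappa(\Omega)}$. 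Substituting into the identity above yields
$$\frac{b(\psi_H,v_H)}{\norm{v_H}_{H^1_\kappa(\Omega)}\,\norm{\psi_H}_{H^{-\frac12}_\kappa(\Gamma_C)}}\ge \frac{\gamma_0}{C}=:\gamma,$$
a bound with constant independent of $H$, as claimed.

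The main obstacle is the stable-extension estimate $\norm{v_H}_{H^1_\kappa(\Omega)}\le C\,\norm{v_H|_{\Gamma_C}}_{H^{\frac12}_\kappa(\Gamma_C)}$ with $C$ independent of both $H$ and the contrast $\kappa_1/\kappa_0$. Because $v_H$ is a sum of locally supported harmonic extensions on the overlapping neighborhoods $\omega_i$, I would control its global energy by summing local energies through a bounded-overlap (finite-coloring) argument, using the quasi-uniformity of $\mathcal{T}^H_C$ together with the energy-minimality of each $v_{\text{ex} 1}^{(i)}$. Keeping every constant free of the contrast is the delicate point; this is where the weighted Poincar\'e inequality \eqref{ineq:wpi} and the piecewise-constant structure of $\kappa$ from Assumption \ref{assumption} enter, guaranteeing that the weighted trace and extension inequalities hold uniformly in $\kappa$.
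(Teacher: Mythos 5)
Your overall skeleton is the same as the paper's: fix $\psi_H$, produce a fine-scale supremizer $v_h$ (the paper gets it from an auxiliary weighted Riesz problem and then invokes \eqref{infsupfine} for the lower stability bound, but taking the supremizer directly as you do is a harmless variation), match traces in $W_H(\Gamma_C)$ through $\pi_H$ so that $L^2_\kappa$-orthogonality gives $b(\psi_H,v_H)=b(\psi_H,v_h)$, and close the argument with a stable extension bound, the $H^{\frac12}_\kappa$-stability of $\pi_H$, and the trace theorem. Up to the extension step, your argument is sound.

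The genuine gap is in your extension operator. You build $v_H$ from the additional contact basis functions $v_{\text{ex}1}^{(i)}$ alone. These are supported in the coarse neighborhoods $\omega_i$ with $\boldsymbol{a_i}\in\Gamma_C$, i.e.\ in a strip of width $O(H)$ along $\Gamma_C$, and each vanishes on $\partial\omega_i\setminus\Gamma_C$ by \eqref{ex1}. Hence any such $v_H$ equals its boundary data on $\Gamma_C$ and is identically zero at distance $O(H)$ from $\Gamma_C$. Take the trace $g_H\equiv 1$ (all coefficients equal to one); a Cauchy--Schwarz argument along each normal segment crossing the strip gives
$$
\abs{v_H}_{H^1_\kappa(\Omega)}^2\;\geq\;\int_{\Gamma_C}\frac{1}{2H}\,d\Gamma\;\gtrsim\; H^{-1},
$$
using $\kappa\geq 1$, while $\norm{g_H}_{H^{\frac12}_\kappa(\Gamma_C)}=O(1)$. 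So the estimate you need, $\norm{v_H}_{H^1_\kappa(\Omega)}\leq C\norm{v_H|_{\Gamma_C}}_{H^{\frac12}_\kappa(\Gamma_C)}$ with $C$ independent of $H$, is false for this class of extensions: the best constant grows like $H^{-1/2}$, and your final inf-sup constant $\gamma_0/C$ then degenerates like $H^{1/2}$, which is exactly what the lemma must exclude. This is not a pathological case — low-frequency $\psi_H$ (e.g.\ $\psi_H\equiv 1$) forces supremizers whose traces have a large mean, so the loss is actually incurred. The energy-minimality claim does not rescue the argument: each $v_{\text{ex}1}^{(i)}$ is energy-minimal only subject to its \emph{full} Dirichlet data on $\partial\omega_i$, including the artificial homogeneous condition on $\partial\omega_i\setminus\Gamma_C$, and it is precisely that condition which creates the boundary layer; the sum is far from the minimal-energy extension of its own trace.

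This is exactly why the paper's extension operator in Appendix \ref{sec:StableExtension} is not built from the contact basis functions alone: it is $L_H(w_H)=\sum_i b_i v_{\text{ex}1}^{(i)}+I_H(L_h(w_H))$ as in \eqref{b5.1}, where $L_h$ is a stable fine-scale lifting into all of $\Omega$ and $I_H$ is the spectral interpolation of Theorem \ref{thm1.1}. The interpolated lifting carries the interior, low-frequency part of the extension with $H$-independent stability, and the functions $v_{\text{ex}1}^{(i)}$ enter only as a trace correction determined by \eqref{b_ieqn}. To repair your proof you must add such an interior component to your $v_H$; with that modification the rest of your argument goes through as written.
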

\begin{proof}
Let $\psi_H\in W_H(\Gamma_C).$ We are going to construct $v_H\in X_H(\Omega)$ satisfying
\begin{equation}\label{eq3.7H}
b(\psi_H,v_H)\geq\|\psi_H\|_{H^{-\frac{1}{2}}_\kappa(\Gamma_C)}^2 \quad \text{and} \quad \gamma\|v_H\|_{H^1_\kappa(\Omega)}\leq \|\psi_H\|_{H^{-\frac{1}{2}}_\kappa(\Gamma_C)}
\end{equation} 
for some constant $\gamma$.
Let us consider the function $v_h\in X_h(\Omega)$ such that
\begin{equation}
\int_{\Omega}\kappa(x) v_hw_h ~dx+\int_{\Omega}\kappa(x) \nabla v_h\cdot\nabla w_h ~dx= b(\psi_H, w_h)  \quad \tforall w_h\in X_h(\Omega). 
\end{equation}
The following stability inequalities hold: for some constants $c_{-}$ and $c_+$, 
\begin{equation}\label{eq3.9H}
c_-\|\psi_H\|_{H^{-\frac{1}{2}}_\kappa(\Gamma_C)}\leq \|v_h\|_{H^1_\kappa(\Omega)}\leq c_+\|\psi_H\|_{H^{-\frac{1}{2}}_\kappa(\Gamma_C)}, 
\end{equation}
where the first inequality implies from the inf-sup condition in the fine-scale finite element space.
Then we set $v_H\in X_H(\Omega)$ such that $v_H|_{\Gamma_C}=\pi_H(v_h|_{\Gamma_C})$ and
\begin{equation}\label{eq3.10H}
\|v_H\|_{H^1_\kappa(\Omega)}\leq c\|\pi_H v_h\|_{H^{\frac{1}{2}}_\kappa(\Gamma_C)}\leq c\|v_h\|_{H^{\frac{1}{2}}_\kappa(\Gamma_C)}.
\end{equation}
This function $v_H$ is built using the stable finite element extension operator similar to the one mentioned in \cite{bernardi1998local}. We will discuss the construction of such an extension operator on the multiscale spaces in Appendix \ref{sec:StableExtension}.

The first statement of \eqref{eq3.7H} is valid because
\begin{equation}\label{eq3.11}
b(\psi_H,v_H)=b(\psi_H,v_h)=\|v_h\|^2_{H^1_\kappa(\Omega)}\geq c_-^2\|\psi_H\|^2_{H^{-\frac{1}{2}}_\kappa(\Gamma_C)}.
\end{equation}
The second is obtained from \eqref{eq3.9H} and \eqref{eq3.10H} together with the trace theorem.
\end{proof}

The derivation of the error estimate between $u_h$ and $u_H$ is based on an adaptation of Falk's lemma. Throughout this section, we denote $a \lesssim b$ if there exists a generic constant $C>0$ independent of the domain and the heterogeneous coefficient such that $a \leq C b$. 
\begin{lemma}
[{\bf Error estimate between fine- and coarse-scale solutions}]
\label{hlemma4.1Ms}
Let $(u_h,\phi_h)$ be the fine-scale solution of the hybrid Signorini problem \eqref{finehybridineq} and $(u_H,\phi_H)\in X_H(\Omega)\times M_H(\Gamma_C)$ the coarse-scale solution of the hybrid Signorini problem \eqref{GMeq}. 
The following estimates holds:
\begin{equation}
\label{lem4-2}
\begin{aligned}
\norm{u_h-u_H}_{H^1_\kappa(\Omega)}^2 & \lesssim 
\inf_{v_H\in K_H(\Omega)} 
\left (\norm{u_h-v_H}_{H^1_\kappa(\Omega)}^2+b(\phi_h, v_H) \right )
+
\inf_{v_h\in K_h(\Omega)} b(\phi_h, v_h-u_H).
\end{aligned}
\end{equation}
\end{lemma}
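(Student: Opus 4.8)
The plan is to adapt Falk's lemma for the nonconforming approximation of a variational inequality. Since $u_h$ solves the fine-scale inequality \eqref{finevarineq} and $u_H$ solves the coarse-scale inequality \eqref{GMvarineq}, I would start from their equivalent forms $a(u_h, u_h - v_h) \leq L(u_h - v_h)$ for all $v_h \in K_h(\Omega)$ and $a(u_H, u_H - v_H) \leq L(u_H - v_H)$ for all $v_H \in K_H(\Omega)$, testing the first with a generic $v_h \in K_h(\Omega)$ and the second with a generic $v_H \in K_H(\Omega)$ (both choices are legitimate in their respective admissible sets). Adding the two inequalities and then adding and subtracting the cross terms $a(u_h, u_H) + a(u_H, u_h)$ turns the left-hand side into the squared energy $a(u_h - u_H, u_h - u_H)$, while the right-hand side becomes a combination of terms that I will reorganize into the shape $a(u_h, w) - L(w)$.

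The key step is to note that every multiscale basis function is represented on the fine grid, so $X_H(\Omega) \subset X_h(\Omega)$ and consequently $u_h - u_H$, $u_h - v_h$, and $u_h - v_H$ all lie in $X_h(\Omega)$. On this space the fine-scale hybrid equation \eqref{finehybridineq} yields $a(u_h, w) - L(w) = b(\phi_h, w)$, which lets me convert each regrouped term into a multiplier pairing $b(\phi_h, \cdot)$. Applying this identity together with the complementarity relation $b(\phi_h, u_h) = 0$ recorded in the Remark following \eqref{finehybridineq} (which removes precisely the $u_h$ contributions), the estimate reduces, after careful bookkeeping of signs and arguments, to
\[
a(u_h - u_H, u_h - u_H) \leq b(\phi_h, v_H) + b(\phi_h, v_h - u_H) + a(u_h - u_H, u_h - v_H).
\]

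To close the argument I would control the residual bilinear term by continuity and Young's inequality, $a(u_h - u_H, u_h - v_H) \leq \norm{u_h - u_H}_{H^1_\kappa(\Omega)}\norm{u_h - v_H}_{H^1_\kappa(\Omega)} \leq \tfrac{1}{2}\norm{u_h - u_H}_{H^1_\kappa(\Omega)}^2 + \tfrac{1}{2}\norm{u_h - v_H}_{H^1_\kappa(\Omega)}^2$, and then invoke coercivity of $a(\cdot,\cdot)$ to absorb the $\norm{u_h - u_H}_{H^1_\kappa(\Omega)}^2$ contribution into the left-hand side; taking the infima over $v_H \in K_H(\Omega)$ and $v_h \in K_h(\Omega)$ independently produces \eqref{lem4-2}. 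I expect the main obstacle to be the algebraic reorganization rather than any analytic estimate: because the scheme is nonconforming, the pairings $b(\phi_h, \cdot)$ do not vanish as they would for a conforming Galerkin equation, and the delicate part is to regroup so that each argument of $a(u_h, \cdot) - L(\cdot)$ is a fine-space function and to deploy the complementarity $b(\phi_h, u_h) = 0$ at exactly the right moments, so that the surviving terms are precisely the consistency contributions $b(\phi_h, v_H)$ and $b(\phi_h, v_h - u_H)$ appearing on the right of \eqref{lem4-2}.
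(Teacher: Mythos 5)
Your proposal is correct and follows essentially the same route as the paper's proof: adding the two variational inequalities, subtracting the cross terms to form $a(u_h-u_H,u_h-u_H)$, invoking the fine-scale hybrid equation (valid since $X_H(\Omega)\subset X_h(\Omega)$) to rewrite the residual terms as $b(\phi_h,\cdot)$ pairings, applying the complementarity $b(\phi_h,u_h)=0$, and closing with Cauchy--Schwarz, Young's inequality, and coercivity before taking infima. Your explicit remark that the nonconforming nature of the spaces is what keeps the consistency pairings alive is exactly the right reading of why both terms survive on the right-hand side of \eqref{lem4-2}.
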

\begin{proof}
From the equivalence between the variational inequality formulation and the hybrid formulation, we know that $u_h$ is also in $K_h(\Omega)$ satisfying  \eqref{finevarineq} and $u_H$ is also in $K_H(\Omega)$ satisfying \eqref{GMvarineq}.
From \eqref{finevarineq} and \eqref{GMvarineq}, we have that
$$
\begin{aligned}
\label{}
   a(u_h,u_h-v_h) &\leq L(u_h-v_h) &\tforall v_h\in K_h(\Omega),   \\
    a(u_H,u_H-v_H) &\leq L(u_H-v_H) &\tforall v_H\in K_H(\Omega).   
\end{aligned}
$$
Adding these inequalities and transposing terms, we obtain
$$
a(u_h,u_h)+a(u_H,u_H)\leq L(u_h-v_h)+L(u_H-v_H)+a(u_h,v_h)+a(u_H,v_H).
$$
Subtracting $a(u_h,u_H)+a(u_H,u_h)$ from both sides and grouping terms, we obtain
$$
\begin{aligned}
a(u_h-u_H,u_h-u_H)
&\leq L(u_h-v_H)+L(u_H-v_h)-a(u_h,u_H-v_h)-a(u_H, u_h-v_H)\\
&= \left [L(u_h-v_H)-a(u_h, u_h-v_H) \right ]+ \left [L(u_H-v_h)-a(u_h,u_H-v_h) \right ]\\
&\quad +a(u_h-u_H,u_h-v_H)\\
&=b(\phi_h, v_H-u_h)+b(\phi_h, v_h-u_H)+a(u_h-u_H,u_h-v_H)\\
&=b(\phi_h, v_H) +b(\phi_h,v_h-u_H)+a(u_h-u_H,u_h-v_H).
\end{aligned}
$$
In the last step we have used $b(\phi_h, u_h)=0$.
Using the continuity and coercivity of the bilinear form $a(\cdot,\cdot)$ and the Cauchy-Schwarz inequality, we have 
$$\begin{aligned}
\norm{u_h-u_H}_{H^1_\kappa(\Omega)}^2&\lesssim \norm{u_h-v_H}_{H^1_\kappa(\Omega)}^2+b(\phi_h, v_H)
+b(\phi_h, v_h-u_H).
\end{aligned}
$$
This completes the proof. 
\end{proof}

\noindent {\bf Remark:} The first term in the right hand-side of \eqref{lem4-2} is the approximation error, while the second term is the consistency error. 

The following theorem shows that the distance between the element $v_h\in X_h(\Omega)$ and the multiscale finite element space $X_H(\Omega) $ we construct is bounded by the norm of $v_h$ up to a specific constant. The readers are referred to \cite{li2019convergence} for more details of the approximibility of the multiscale space. 

Recall that $(\lambda^{(i)}_j, v^{(i)}_j)$ is the eigenpair obtained from \eqref{spectralpb}. Based on the eigenfunctions, we define the interpolation operator $I_H^{(i)} :  X_h (\omega_i) \to X_H(\omega_i)$ by 
\begin{equation}
\displaystyle I_H^{(i)} v_h:=\sum_{k=1}^{{l_i}} s(v_h, v_k^{(i)}) v_k^{(i)}
\label{eqn:interpolation}
\end{equation}
for any $v_h \in X_h(\omega_i)$. 
\begin{thm}\label{thm1.1}
Assume that $\omega_i$ is a local domain with central vertex $\boldsymbol{a_i}$. 
Then, the following estimate holds
\begin{equation}
\inf_{r_H\in X_H(\omega_i) } \norm{v_h-r_H}_{H^1_\kappa(\omega_i)}^2 \lesssim \left (1+\frac{1}{\Lambda}\right)|v_h|_{H^1_\kappa(\omega_i)}^2
\end{equation}
for any $v_h \in X_h(\omega_i)$, where $\displaystyle\Lambda:=\min_{1\leq i\leq N} \lambda_{l_i+1}^{(i)}$. 
\end{thm}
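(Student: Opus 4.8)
The plan is to take as competitor in the infimum the interpolation operator $I_H^{(i)}$ defined in \eqref{eqn:interpolation}, so that
$$\inf_{r_H\in X_H(\omega_i)} \norm{v_h-r_H}_{H^1_\kappa(\omega_i)}^2 \leq \norm{v_h-I_H^{(i)}v_h}_{H^1_\kappa(\omega_i)}^2,$$
and then to estimate the right-hand side by diagonalizing everything with respect to the eigenbasis of \eqref{spectralpb}. First I would record the standard fact that, since $a_i(\cdot,\cdot)$ and $s_i(\cdot,\cdot)$ are symmetric and positive definite on $X_h(\omega_i)$, the eigenfunctions $\{v_k^{(i)}\}_{k=1}^{J_i}$ of \eqref{spectralpb} may be taken to be simultaneously orthogonal: normalizing so that $s_i(v_j^{(i)},v_k^{(i)})=\delta_{jk}$ forces $a_i(v_j^{(i)},v_k^{(i)})=\lambda_k^{(i)}\delta_{jk}$. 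These functions form a basis of $X_h(\omega_i)$, so any $v_h\in X_h(\omega_i)$ expands as $v_h=\sum_{k=1}^{J_i}c_k v_k^{(i)}$ with $c_k=s_i(v_h,v_k^{(i)})$.

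Next I would compute the interpolation error explicitly. By the definition of $I_H^{(i)}$ one has $I_H^{(i)}v_h=\sum_{k=1}^{l_i}c_k v_k^{(i)}$, so the error is the spectral tail $e:=v_h-I_H^{(i)}v_h=\sum_{k=l_i+1}^{J_i}c_k v_k^{(i)}$. Using the identity $\norm{e}_{H^1_\kappa(\omega_i)}^2=s_i(e,e)+a_i(e,e)$ together with the orthogonality relations, the energy part is bounded with no spectral gap needed,
$$a_i(e,e)=\sum_{k>l_i}\lambda_k^{(i)}c_k^2\leq \sum_{k=1}^{J_i}\lambda_k^{(i)}c_k^2=a_i(v_h,v_h)=|v_h|_{H^1_\kappa(\omega_i)}^2,$$
whereas the $L^2_\kappa$ part is controlled precisely through the gap: since $\lambda_k^{(i)}\geq \lambda_{l_i+1}^{(i)}$ for every $k>l_i$,
$$s_i(e,e)=\sum_{k>l_i}c_k^2\leq \frac{1}{\lambda_{l_i+1}^{(i)}}\sum_{k>l_i}\lambda_k^{(i)}c_k^2\leq \frac{1}{\lambda_{l_i+1}^{(i)}}\,|v_h|_{H^1_\kappa(\omega_i)}^2.$$

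Adding the two contributions gives $\norm{e}_{H^1_\kappa(\omega_i)}^2\leq \left(1+\tfrac{1}{\lambda_{l_i+1}^{(i)}}\right)|v_h|_{H^1_\kappa(\omega_i)}^2$, and the stated estimate follows by replacing $\lambda_{l_i+1}^{(i)}$ with the smaller quantity $\Lambda=\min_{1\leq i\leq N}\lambda_{l_i+1}^{(i)}$ in the denominator, so that $1/\lambda_{l_i+1}^{(i)}\leq 1/\Lambda$.

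The essential mechanism is the spectral gap, which trades the discarded $L^2_\kappa$ mass for the energy seminorm at cost $1/\Lambda$, while the energy of the tail is simply dominated by the full seminorm. I do not expect a genuine obstacle here; the only points that deserve careful statement are the simultaneous diagonalization of $a_i$ and $s_i$ and the orthogonal splitting $\norm{\cdot}_{H^1_\kappa(\omega_i)}^2=s_i(\cdot,\cdot)+a_i(\cdot,\cdot)$ on $\omega_i$, both routine but exactly what makes the diagonal computation valid. No difficulty arises from the boundary behaviour of the local basis, since $I_H^{(i)}v_h\in X_H(\omega_i)$ is an admissible competitor by construction and including the additional boundary basis in $X_H(\omega_i)$ can only lower the infimum.
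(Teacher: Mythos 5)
Your proposal is correct and follows essentially the same argument as the paper's own proof: the same competitor $I_H^{(i)}v_h$, the same $s_i$-orthonormal eigenfunction expansion, the bound $a_i(e,e)\leq |v_h|_{H^1_\kappa(\omega_i)}^2$ for the tail energy, and the spectral-gap bound $s_i(e,e)\leq \lambda_{l_i+1}^{(i)\,-1}|v_h|_{H^1_\kappa(\omega_i)}^2$ followed by $1/\lambda_{l_i+1}^{(i)}\leq 1/\Lambda$. The only (harmless) additions are your explicit statement of the simultaneous diagonalization, which the paper leaves implicit, and the remark that enlarging $X_H(\omega_i)$ by the extra boundary basis only lowers the infimum.
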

\begin{proof}
The set of the eigenfunctions $\left \{v_1^{(i)},\cdots ,v_{J_i} ^{(i)} \right \}$ forms an orthonormal basis of the fine-scale finite element space $X_h(\omega_i)$ with respect to $s(\cdot, \cdot)$ inner product. 
On the other hand, $X_H(\omega_i)$ is a finite-dimensional space making up with the first $l_i$ eigenfunctions $\left \{v_1^{(i)},\cdots ,v_{l_i} ^{(i)} \right \}$ for the spectral problem \eqref{spectralpb} with corresponding eigenvalues $\lambda_1^{(i)}<\lambda_2^{(i)}<\lambda_3^{(i)}<\cdots<\lambda_{l_i}^{(i)}$. Given any $v_h\in X_h(\omega_i), $  we can write 
$$v_h=\sum_{k=1}^{J_i} s(v_h, v_k^{(i)})v_k^{(i)}.$$
Then, we have 
$$|v_h|_{H^1_\kappa(\omega_i)}^2 = \sum_{k=1}^{J_i} \lambda_k^{(i)}\abs{s(v_h,  v_k^{(i)})}^2 \quad \text{and} \quad 
\norm{v_h}_{L^2_\kappa(\omega_i)}^2=\sum_{k=1}^{J_i} \abs{s(v_h, v_k^{(i)})}^2$$
due to the orthogonality of the eigenfunctions. 
We estimate the interpolation error as follows: 
$$
|v_h- I_H^{(i)} v_h|^2_{H^1_\kappa(\omega_i)}
=\left|\sum_{k={l_i}+1}^{J_i} s(v_h, v_k^{(i)}) v_k^{(i)}\right|^2_{H^1_\kappa(\omega_i)}
\leq |v_h|_{H^1_\kappa(\omega_i)}^2.
$$
On the other hand, we have 
$$
\begin{aligned}
\norm{v_h-I_H^{(i)} v_h}^2_{L^2_\kappa(\omega_i)}
\leq\frac{1}{\lambda_{{l_i}+1}^{(i)}}|v_h- I_H^{(i)} v_h|^2_{H^1_\kappa(\omega_i)}
& \leq  \frac{1}{\Lambda}|v_h|_{H^1_\kappa(\omega_i)}^2.
\end{aligned}
$$
Combining the estimates for the interpolation error yields the desired estimate. 
\end{proof}

\begin{lemma}\label{hlemma4.5Ms}
Let $(u_h,\phi_h)$ be the fine-scale solution of the hybrid Signorini problem \eqref{finehybridineq} and $(u_H,\phi_H)\in X_H(\Omega)\times M_H(\Gamma_C)$ the coarse-scale solution of the hybrid Signorini problem \eqref{GMeq}. 
Then, the following estimate holds: 
$$\begin{aligned}
&~~\inf _{v_H\in K_{H}(\Omega)}\left(\left\|u_h-v_{H}\right\|_{H^{1}_\kappa(\Omega)}^{2}+b(\phi_h,v_{H})\right)\\
&\lesssim H\|u_h\|_{H^1_\kappa(\Omega)}^2
+\left (1+\frac{1}{\Lambda} \right )|u_h|_{H^{1}_\kappa(\Omega)}^2
+H \left\|\phi_h\right\|_{H^{\frac{1}{2}}_\kappa\left(\Gamma_{C}\right)}
\|u_h\|_{H^{1}_\kappa(\Omega)},
\end{aligned}
$$
\end{lemma}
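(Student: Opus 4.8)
The plan is to bound the infimum by exhibiting one admissible competitor $v_H\in K_H(\Omega)$ and estimating its two contributions separately; this is the coarse-scale analogue of the classical approximation/consistency splitting, except that the fine-scale solution $u_h$ only enjoys $H^1_\kappa$-regularity, so the Lagrange-interpolation bounds of the smooth case must be replaced by the local spectral approximation of Theorem \ref{thm1.1} together with the projection estimates \eqref{eq3.6} for $\pi_H$.

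The candidate is built as $v_H=r_H+w_H$. The interior part $r_H$ is assembled from the local spectral interpolants $I_H^{(i)}u_h$ of \eqref{eqn:interpolation}, which approximate $u_h$ inside each $\omega_i$ with error controlled by Theorem \ref{thm1.1}. The boundary part $w_H=\sum_{\boldsymbol{a_i}\in\Gamma_C}\alpha_i\,v_{\text{ex}1}^{(i)}$ uses the additional basis functions of \eqref{ex1}: since the traces $v_{\text{ex}1}^{(i)}|_{\Gamma_C}$ are exactly the coarse nodal hat functions spanning $W_H(\Gamma_C)$, choosing $\alpha_i$ as the nodal values of $\pi_H(u_h|_{\Gamma_C})$ (correcting for the trace of $r_H$) forces $v_H|_{\Gamma_C}=\pi_H(u_h|_{\Gamma_C})$. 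Matching this particular trace is what places $v_H$ in $K_H(\Omega)$: for any $\psi_H\in M_H(\Gamma_C)\subset M_h(\Gamma_C)$ the definition of the $L^2_\kappa(\Gamma_C)$-projection gives $b(\psi_H,v_H)=\langle\kappa\psi_H,\pi_H(u_h|_{\Gamma_C})\rangle=\langle\kappa\psi_H,u_h|_{\Gamma_C}\rangle=b(\psi_H,u_h)\ge 0$, the last inequality because $u_h\in K_h(\Omega)$.

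With this $v_H$ the consistency term is the cleanest piece. Using the saturation identity $b(\phi_h,u_h)=0$ from \eqref{eqn:fine-hy} and $v_H|_{\Gamma_C}=\pi_H(u_h|_{\Gamma_C})$, I rewrite $b(\phi_h,v_H)=\langle\kappa\phi_h,\pi_H(u_h|_{\Gamma_C})-u_h|_{\Gamma_C}\rangle$ and bound it by $H^{\frac12}_\kappa$--$H^{-\frac12}_\kappa$ duality, so that the first estimate in \eqref{eq3.6} and the trace theorem yield $b(\phi_h,v_H)\lesssim H\,\norm{\phi_h}_{H^{\frac12}_\kappa(\Gamma_C)}\norm{u_h}_{H^1_\kappa(\Omega)}$, which is the third term on the right-hand side. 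For the approximation term I split $\norm{u_h-v_H}_{H^1_\kappa(\Omega)}^2\lesssim\norm{u_h-r_H}_{H^1_\kappa(\Omega)}^2+\norm{w_H}_{H^1_\kappa(\Omega)}^2$; summing the local bounds of Theorem \ref{thm1.1} over the patches produces the $(1+\tfrac1\Lambda)\abs{u_h}_{H^1_\kappa(\Omega)}^2$ term, while the energy of the boundary correction $w_H$, a (discrete) harmonic extension of a trace of size $\norm{u_h|_{\Gamma_C}-\pi_H(u_h|_{\Gamma_C})}$, is estimated through stability of the extension, the $L^2_\kappa$-projection bound in \eqref{eq3.6}, an inverse inequality on the quasi-uniform coarse trace mesh $\mathcal{T}^H_C$, and the trace theorem, giving the $H\norm{u_h}_{H^1_\kappa(\Omega)}^2$ term.

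The main obstacle is this boundary-correction estimate, i.e.\ extracting the single power of $H$ in $H\norm{u_h}^2_{H^1_\kappa(\Omega)}$. Because $u_h$ carries no regularity beyond $H^1_\kappa$, one cannot imitate the $O(H^2)$ Lagrange-interpolation argument of the smooth case; instead the missing regularity must be traded for powers of $H$ by interpolating between the $L^2_\kappa$ and $H^1_\kappa$ projection estimates \eqref{eq3.6} and invoking inverse inequalities, while keeping the trace of $v_H$ exactly equal to $\pi_H(u_h|_{\Gamma_C})$ so that membership in $K_H(\Omega)$ is preserved. Reconciling the fine-scale trace produced by the interior interpolant $r_H$ with the coarse trace required on $\Gamma_C$ is the delicate bookkeeping step, and it is precisely here that the additional contact basis functions of \eqref{ex1} are essential.
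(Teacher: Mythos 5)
Your proposal takes essentially the same route as the paper's own proof: the paper likewise chooses the competitor $v_H = I_H u_h + w_H$ with $w_H$ a stable extension (constructed in Appendix \ref{sec:StableExtension} precisely out of the contact basis functions $v_{\text{ex}1}^{(i)}$) of the trace $\pi_H(u_h|_{\Gamma_C}) - (I_H u_h)|_{\Gamma_C}$, verifies $v_H \in K_H(\Omega)$ through the $L^2_\kappa(\Gamma_C)$-projection property and $u_h \in K_h(\Omega)$, bounds the approximation error by Theorem \ref{thm1.1} together with the projection estimates \eqref{eq3.6} and the trace theorem, and bounds $b(\phi_h, v_H)$ via the saturation condition and $H^{\frac12}_\kappa$--$H^{-\frac12}_\kappa$ duality. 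The only cosmetic difference is that you assemble the boundary correction directly as a combination of the $v_{\text{ex}1}^{(i)}$; note that since $(I_H u_h)|_{\Gamma_C}$ is a fine-grid function, this correction matches $\pi_H(u_h|_{\Gamma_C})$ only after projection onto $W_H(\Gamma_C)$ — which is exactly the weak trace identity the paper's extension operator provides, and is sufficient for both the membership argument and the final estimates.
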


\begin{proof}
Based on the local interpolation operator define in \eqref{eqn:interpolation}, we define the global interpolation operator $I_H$ such that $\displaystyle I_H u_h:=\sum_{1\leq i\leq N} I_H^{(i)} u_h $ for any $u_h \in X_h(\Omega)$. Then, the following error estimate holds
\begin{equation}\label{eq3.1}
\begin{aligned}
\|u_h-I_H  u_h\|_{H^{1}_\kappa(\Omega)}^2&\lesssim \sum_{1\leq i\leq N}\|u_h-I_H  u_h\|_{H^{1}_\kappa(\omega_i)}^2\\
&\leq\sum_{1\leq i\leq N}\left (1+\frac{1}{\Lambda}\right) |u_h|_{H^1_\kappa(\omega_i)}^2 
\lesssim  \left (1+\frac{1}{\Lambda} \right )|u_h|_{H^1_\kappa(\Omega)}^2.
\end{aligned}
\end{equation}
Next, we choose $v_H=I_H u_h+w_H,$ with $w_H\in X_H(\Omega)$ a stable extension of the trace function 
$\left(\pi_{H}\left({u_h}_{\mid \Gamma_{C}}\right)-\left(I_H u_h\right)_{\mid \Gamma_{C}}\right)$. 
See Appendix \ref{sec:StableExtension} for the construction of the stable extension. 
By definition, one has ${v_H}_{\mid\Gamma_C}=\pi_{H}\left({u_h}_{\mid \Gamma_{C}}\right)$. Since ${u_h}\in K_h(\Omega)$, we have
$$b\left(\psi_{H}, v_{H}\right)=\int_{\Gamma_{C}} \kappa\pi_{H}\left({u_h}_{\mid \Gamma_{C}}\right) \psi_{H} d \Gamma=\int_{\Gamma_{C}}\kappa\left({u_h}_{\mid \Gamma_{C}}\right) \psi_{H} d \Gamma \geq 0$$
for any $\psi_H\in M_H(\Gamma_C)$. 
Furthermore, using \eqref{eq3.1} and \eqref{eq3.6}, we have 
$$\begin{aligned}
\left\|u_h-v_{H}\right\|_{H^{1}_\kappa(\Omega)}&\leq\left\|u_h-I_H u_h\right\|_{H^{1}_\kappa(\Omega)}+\left\|w_H\right\|_{H^{1}_\kappa(\Omega)} \\
&\lesssim \|u_h-I_H u_h\|_{H^1_\kappa(\Omega)} +\|\pi_H ({u_h}_{\mid\Gamma_C})-(I_H u_h)_{\mid\Gamma_C}\|_{H^{\frac{1}{2}}_\kappa\left(\Gamma_{C}\right)}\\
&\lesssim \left (1+\frac{1}{\Lambda} \right )^{\frac{1}{2}}|u_h|_{H^1_\kappa(\Omega)}+\|\pi_H({u_h}_{\mid\Gamma_C})-{u_h}_{\mid\Gamma_C}\|_{H^{\frac{1}{2}}_\kappa\left(\Gamma_{C}\right)}
+\|(u_h-I_H {u_h})_{\mid\Gamma_C}\|_{H^{\frac{1}{2}}_\kappa\left(\Gamma_{C}\right)}\\
&\lesssim \left (1+\frac{1}{\Lambda}\right)^{\frac{1}{2}}|u_h|_{H^1_\kappa(\Omega)}+
H^{\frac{1}{2}}\|u_h\|_{H^1_\kappa\left(\Gamma_C\right)} 
+\left(1+\frac{1}{\Lambda}\right)^{\frac{1}{2}}|u_h|_{H^{1}_\kappa(\Omega)}\\
&\lesssim H^{\frac{1}{2}}\sum_{e\in \mathcal{T}^h_C}\|u_h\|_{H^1_\kappa(e)}+\left(1+\frac{1}{\Lambda}\right)^{\frac{1}{2}}|u_h|_{H^{1}_\kappa(\Omega)}\\
& \lesssim\sum_{e\in \mathcal{T}^h_C} H^{\frac{1}{2}}\|u_h\|_{H^2_\kappa(\tau_e)} +\left (1+\frac{1}{\Lambda}\right)^{\frac{1}{2}}|u_h|_{H^1_\kappa(\Omega)}\\
& \lesssim H^{\frac{1}{2}}\|u_h\|_{H^1_\kappa(\Omega)} +\left (1+\frac{1}{\Lambda}\right)^{\frac{1}{2}}|u_h|_{H^1_\kappa(\Omega)}.
\end{aligned}$$
Here, $\tau_{e}$ is the fine-grid element having the edge $e$. 
In order to evaluate the integral term $b(\phi_h, v_H)$, we use the saturation condition $$\int_{\Gamma_C}\kappa u_h\phi_h d \Gamma=0$$ 
and again the estimate \eqref{eq3.6}. Thus, we obtain 
$$\begin{aligned}
\int_{\Gamma_{C}}\kappa \phi_hv_{H} d \Gamma
&=\int_{\Gamma_{C}}\kappa\phi_h\left(\pi_{H} u_h-u_h\right) d \Gamma \leq \left\|\phi_h\right\|_{H^{\frac{1}{2}}_\kappa\left(\Gamma_{C}\right)}\left\|\pi_{H} u_h-u_h\right\|_{H^{-\frac{1}{2}}_\kappa\left(\Gamma_{C}\right)}\\
&\lesssim H \left\|\phi_h\right\|_{H^{\frac{1}{2}}_\kappa\left(\Gamma_{C}\right)}\|u_h\|_{H^{1}_\kappa(\Omega)}.
\end{aligned}$$
This completes the proof. 
\end{proof}
\begin{lemma}\label{hlemma4.6Ms}
Let $(u_h,\phi_h)$ be the fine-scale solution of the hybrid Signorini problem \eqref{finehybridineq} and $(u_H,\phi_H)\in X_H(\Omega)\times M_H(\Gamma_C)$ be the coarse-scale solution of the hybrid Signorini problem \eqref{GMeq}. 
Then, the consistency error is bounded as follows:
$$\inf _{v_h \in K_h(\Omega)} b(\phi_h,v_h -u_{H}) \lesssim  H^{\frac{1}{2}}\|\phi_h\|_{H^{\frac12}_\kappa(\Gamma_C)} 
\left (\left\|u_h-u_{H}\right\|_{H^{1}_\kappa(\Omega)}+ h |u_h|_{H^1_\kappa(\Omega)} \right ).
$$
\end{lemma}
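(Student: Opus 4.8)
The plan is to transplant the consistency argument used in the continuous-versus-fine analysis to the present fine-versus-coarse setting. Since the infimum runs over $K_h(\Omega)$ and the fine-scale solution itself satisfies $u_h\in K_h(\Omega)$, I would simply take $v_h=u_h$, so the quantity to estimate becomes $b(\phi_h,u_h-u_H)$. The two structural facts I intend to exploit are the fine-scale saturation condition $b(\phi_h,u_h)=0$ (recorded in the Remark after \eqref{finehybridineq}) and the pointwise complementarity it encodes: since $\kappa>0$, and $\phi_h\geq 0$, $u_h\geq 0$ on $\Gamma_C$ with $\int_{\Gamma_C}\kappa\phi_h u_h\,d\Gamma=0$, one gets $\phi_h u_h=0$ almost everywhere on $\Gamma_C$. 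I would also introduce a nonnegativity-preserving quasi-interpolation operator $r_H:W_h(\Gamma_C)\to W_H(\Gamma_C)$ (a coarse Cl\'ement-type operator, as in the fine-scale lemma) and set $\psi_H:=r_H\phi_h\in M_H(\Gamma_C)$, which enjoys the coarse interpolation estimate $\norm{\phi_h-\psi_H}_{L^2_\kappa(\Gamma_C)}\lesssim H^{1/2}\norm{\phi_h}_{H^{1/2}_\kappa(\Gamma_C)}$ obtained from \eqref{eq3.6} by interpolation between $L^2_\kappa(\Gamma_C)$ and $H^1_\kappa(\Gamma_C)$.

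Next I would split
\[
b(\phi_h,u_h-u_H)=b(\phi_h-\psi_H,u_h-u_H)+b(\psi_H,u_h-u_H).
\]
The first term is the approximation part: by Cauchy--Schwarz, the interpolation estimate above, and the trace inequality $\norm{u_h-u_H}_{L^2_\kappa(\Gamma_C)}\lesssim\norm{u_h-u_H}_{H^1_\kappa(\Omega)}$, it is controlled by $H^{1/2}\norm{\phi_h}_{H^{1/2}_\kappa(\Gamma_C)}\norm{u_h-u_H}_{H^1_\kappa(\Omega)}$, which is exactly the first contribution in the claimed bound. For the second term I would use that $u_H\in K_H(\Omega)$ and $\psi_H\in M_H(\Gamma_C)$ give $b(\psi_H,u_H)\geq 0$, whence $b(\psi_H,u_h-u_H)\leq b(\psi_H,u_h)$; then, writing $b(\psi_H,u_h)=b(\psi_H-\phi_h,u_h)+b(\phi_h,u_h)$ and discarding the last term by saturation, the entire consistency part is reduced to the single integral $\int_{\Gamma_C}\kappa(\psi_H-\phi_h)u_h\,d\Gamma$.

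The crux is to bound this integral by $H^{1/2}\norm{\phi_h}_{H^{1/2}_\kappa(\Gamma_C)}\,h\,|u_h|_{H^1_\kappa(\Omega)}$, and I would do this elementwise over $\mathcal{T}^h_C$. On a fine contact element $t$ on which $u_h$ does not vanish identically, complementarity forces $\phi_h\equiv 0$ there; since $u_h$ is continuous and is forced to vanish on the portion of $\Gamma_C$ carrying active flux, $u_h$ has a zero at a nearby node, so a one-dimensional Poincar\'e inequality yields $\norm{u_h}_{L^2_\kappa(t)}\lesssim h_t\norm{\tfrac{du_h}{ds}}_{L^2_\kappa(t)}$. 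Applying Cauchy--Schwarz on each $t$, summing, and factoring out the coarse interpolation error $\norm{\psi_H-\phi_h}_{L^2_\kappa(\Gamma_C)}\lesssim H^{1/2}\norm{\phi_h}_{H^{1/2}_\kappa(\Gamma_C)}$ leaves a mesh factor times the tangential-derivative norm of $u_h$ along $\Gamma_C$. Finally, exactly as in the proof of Lemma \ref{hlemma4.5Ms}, I would convert this boundary quantity to a volume one by the trace theorem together with the vanishing of $D^2u_h$ elementwise for $P_1$ functions, thereby controlling it by $|u_h|_{H^1_\kappa(\Omega)}$ and recovering the second contribution in the bound.

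The main obstacle is precisely this last elementwise step: one must localize the product $(\psi_H-\phi_h)u_h$ to a neighbourhood of the free boundary using the pointwise complementarity, confirm that $\psi_H-\phi_h$ vanishes away from the support of $\phi_h$ so that only transition elements contribute, and — most delicately — keep the powers of $h$ and $H$ separate so that the coarse interpolation error supplies the $H^{1/2}$ factor while the Poincar\'e argument supplies the fine-scale $h$. The construction and the zero-set locality of the nonnegativity-preserving coarse interpolant $r_H$ are the sensitive ingredients that make this bookkeeping go through; the remaining manipulations are routine Cauchy--Schwarz, trace inequalities, and the $P_1$ identity already invoked in Lemma \ref{hlemma4.5Ms}.
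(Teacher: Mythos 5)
Your proposal is correct and follows essentially the same route as the paper's own proof: the choice $v_h=u_h$, the splitting of $b(\phi_h,u_h-u_H)$ with $\psi_H=r_H\phi_h$ (the non-negativity-preserving Cl\'ement interpolant), the use of $b(\psi_H,u_H)\geq 0$ together with the saturation condition to reduce the consistency part to $\int_{\Gamma_C}\kappa(\psi_H-\phi_h)u_h\,d\Gamma$, and the elementwise complementarity plus one-dimensional weighted Poincar\'e argument yielding the factor $h\,|u_h|_{H^1_\kappa(\Omega)}$ are exactly the steps taken there. The delicate points you flag (zero-set locality of $r_H$, separating the $H^{1/2}$ and $h$ factors, and converting boundary norms to volume norms) are treated in the paper at the same level of detail, so no further work is needed.
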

\begin{proof}
Choosing $v_h=u_h$, we have for all $\psi_H\in M_H(\Gamma_C)$,
$$
b(\phi_h,v_h -u_{H}) =\int_{\Gamma_{C}}\kappa\left(\phi_h-\psi_{H}\right)\left(u_h-u_{H}\right) d \Gamma+\int_{\Gamma_{C}} \kappa\psi_{H}\left(u_h-u_{H}\right) d \Gamma.
$$
Taking $\psi_H=r_H(\phi_h)$ (where $r_H$ is a non-standard Cl\'ement type interpolation operator preserving non-negativity \cite{clement}), the first integral can be estimated as follows: 
\begin{equation}\label{eq4.4}
\begin{aligned}
\int_{\Gamma_{C}}\kappa\left(\phi_h- \psi_{H}\right)\left(u_h-u_{H}\right) d \Gamma
& \leq \|\phi_h- \psi_{H}\|_{L^2_\kappa(\Gamma_C)}\|u_h-u_H\|_{L^2_\kappa(\Gamma_C)}\\
&\lesssim H^{\frac{1}{2}}\|\phi_h\|_{H^{\frac12}_\kappa(\Gamma_C)}\left\|u_h-u_{H}\right\|_{H^{1}_\kappa(\Omega)}.
\end{aligned}
\end{equation}
Working out the remaining term requires some preliminary technical steps. Taking into account of $b(\psi_H, u_H)\geq 0$ 
and the saturation condition, 
we obtain
\begin{equation}\label{eq4.5}
\begin{aligned} \int_{\Gamma_{C}} \kappa \psi_{H}\left(u_h-u_{H}\right) d \Gamma & \leq \int_{\Gamma_{C}} \kappa\psi_{H} u_h d \Gamma=\int_{\Gamma_{C}}\kappa\left(\psi_{H}-\phi_h\right) u_h d \Gamma \\ & \leq \sum_{t \in \mathcal{T}^{H}_{C}} \int_{t}\kappa\left(\psi_{H}-\phi_h\right) u_h d \Gamma\\
& \leq \sum_{t \in \mathcal{T}^{H}_{C}}\left\|\psi_{H}-\phi_h\right\|_{L^{2}_\kappa(t)}\|u_h\|_{L^{2}_\kappa(t)} 
\end{aligned}
\end{equation}
For each fine-scale segment $e=e_{i,h}=( x_{i,h}^C,  x_{i+1,h}^C)$, we define $T_e$ as the union of $e$ and its neighbors, for example, $T_{e_{1,h}}={e_{1,h}}\cup{e_{2,h}}=( x_{1,h}^C,  x_{3,h}^C)$, $T_{e_{2,h}}={e_{1,h}}\cup{e_{2,h}}\cup{e_{3,h}}=( x_{1,h}^C,  x_{4,h}^C)$.
Then, focusing on each segment $e$, if ${u_h}_{\mid T_e}>0$ then $\phi_h=0$, this implies that ${\psi_H}_{\mid e}=0$; otherwise, $u_h$ has at least one zero in $T_e$. This results in, for any $x\in T_e$, from the weighted Poincar\'{e} inequality for 1D case (see \cite{pechstein2013weighted}), we have $$\|u_h\|_{L^2_\kappa(e)}\leq C h_e|u_h|_{H^1_\kappa(T_e)}.$$
Hence, 
$$\displaystyle\begin{aligned}
\sum_{t\in \mathcal{T}^H_C}\|u_h\|_{L^2_\kappa(t)}^2&=\sum_{t\in \mathcal{T}^H_C}\sum_{e\subseteq t} \|u_h\|_{L^2_\kappa(e)}^2\leq c h^2\sum_{t\in \mathcal{T}^H_C}\sum_{e\subseteq t} |u_h|_{H^1_\kappa(T_e)}^2\\
&=c h^2\sum_{t\in \mathcal{T}^H_C}\sum_{e\subseteq t} \sum_{\tilde{e}\subseteq T_e}|u_h|_{H^1_\kappa(\tilde{e})}^2
\leq c h^2\sum_{t\in \mathcal{T}^H_C}\sum_{e\subseteq t} \sum_{\tilde{e}\subseteq T_e}\|\nabla u_h\|_{L^2_\kappa({\tilde{e}})}^2\\
&\lesssim h^2\sum_{t\in \mathcal{T}^H_C}\sum_{e\subseteq t} \sum_{\tilde{e}\subseteq T_e}\|\nabla u_h\|_{H^1_\kappa(\tau_{\tilde{e}})}^2= h^2\sum_{t\in \mathcal{T}^H_C}\sum_{e\subseteq t} \sum_{\tilde{e}\subseteq T_e}|u_h|_{H^1_\kappa(\tau_{\tilde{e}})}^2\\
&\lesssim h^2|u_h|^2_{H^1_\kappa(\Omega)},
\end{aligned}$$
where $\tau_{\tilde{e}}$ is the fine-grid element having the edge $\tilde{e}$. 
Using \eqref{eq4.5} and the Cauchy-Schwartz inequality, we obtain 
\begin{equation}\label{eq4.6}
\begin{aligned}
\int_{\Gamma_{C}} \kappa\psi_{H}\left(u_h-u_{H}\right) d\Gamma &\lesssim \left (\sum_{t \in \mathcal{T}^{H}_{C}} \left\|\psi_{H}-\phi_h\right\|_{L^2_\kappa(t)}^2 \right )^{\frac{1}{2}} h|u_h|_{H^{1}_\kappa\left(T_{t}\right)}\\
& \lesssim H^{\frac{1}{2}} \|\phi_h\|_{H^{\frac12}_\kappa(\Gamma_C)}h|u_h|_{H^1_\kappa(\Omega)}.
\end{aligned}
\end{equation}
This completes the proof. 
\end{proof}

\noindent {\bf Remark:} In the proof above, we have used the approximation property of non-standard Cl\'{e}ment type interpolation operator. 
This approximation property still holds under the weighted norm due to the weighted Poincar\'{e} inequality with the given assumptions on the weight function $\kappa$. 

Using Lemma \ref{hlemma4.1Ms}, together with the bounds for the approximation and the consistency errors, we are now in the position to state the main result of this work, namely, the error bound of the proposed multiscale method. 
\begin{thm}\label{hthm4.3Ms}
Let $(u_h,\phi_h)$ be the fine-scale solution of the hybrid Signorini problem \eqref{finehybridineq} and $(u_H,\phi_H)\in X_H(\Omega)\times M_H(\Gamma_C)$ the coarse-scale solution of the hybrid Signorini problem \eqref{GMeq}.  
Then, the following error estimate holds: 
\begin{equation}\label{eq4.3}
\begin{aligned}
\|u_h-u_H\|_{H^1_\kappa(\Omega)} 
\lesssim  H ^{\frac{1}{2}} \left ( \|\phi_h\|_{H^{\frac{1}{2}}_\kappa(\Gamma_C)} +\|\kappa^{-\frac{1}{2}}f\|_{L^2(\Omega)} \right )+
\left ( 1+\frac{1}{\Lambda}\right )^{\frac{1}{2}}  \|\kappa^{-\frac{1}{2}}f\|_{L^2(\Omega)}.
\end{aligned}
\end{equation}

\end{thm}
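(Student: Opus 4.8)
The plan is to close the error inequality by inserting the two structural bounds into the Falk-type decomposition and then absorbing the troublesome self-referential term. Starting from Lemma \ref{hlemma4.1Ms},
\[
\norm{u_h-u_H}_{H^1_\kappa(\Omega)}^2 \lesssim \inf_{v_H\in K_H(\Omega)}\left(\norm{u_h-v_H}_{H^1_\kappa(\Omega)}^2 + b(\phi_h,v_H)\right) + \inf_{v_h\in K_h(\Omega)} b(\phi_h, v_h-u_H),
\]
I would substitute the right-hand side of Lemma \ref{hlemma4.5Ms} for the first (approximation) infimum and the right-hand side of Lemma \ref{hlemma4.6Ms} for the second (consistency) infimum. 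This produces
\[
\norm{u_h-u_H}_{H^1_\kappa(\Omega)}^2 \lesssim H\norm{u_h}_{H^1_\kappa(\Omega)}^2 + \left(1+\frac{1}{\Lambda}\right)\abs{u_h}_{H^1_\kappa(\Omega)}^2 + H\norm{\phi_h}_{H^{\frac12}_\kappa(\Gamma_C)}\norm{u_h}_{H^1_\kappa(\Omega)} + H^{\frac12}\norm{\phi_h}_{H^{\frac12}_\kappa(\Gamma_C)}\left(\norm{u_h-u_H}_{H^1_\kappa(\Omega)} + h\abs{u_h}_{H^1_\kappa(\Omega)}\right).
\]

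The main obstacle — and the one genuinely delicate step — is that the consistency bound reintroduces $\norm{u_h-u_H}_{H^1_\kappa(\Omega)}$, the very quantity being estimated, on the right-hand side. To handle this I would apply Young's inequality to the cross term $H^{\frac12}\norm{\phi_h}_{H^{\frac12}_\kappa(\Gamma_C)}\norm{u_h-u_H}_{H^1_\kappa(\Omega)}$, splitting off a piece $\epsilon\,\norm{u_h-u_H}_{H^1_\kappa(\Omega)}^2$ with $\epsilon$ chosen small enough relative to the implied constant that it can be moved to and absorbed by the left-hand side, leaving a residual of order $H\norm{\phi_h}_{H^{\frac12}_\kappa(\Gamma_C)}^2$. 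The remaining mixed products, namely $H\norm{\phi_h}_{H^{\frac12}_\kappa(\Gamma_C)}\norm{u_h}_{H^1_\kappa(\Omega)}$ and $H^{\frac12}h\,\norm{\phi_h}_{H^{\frac12}_\kappa(\Gamma_C)}\abs{u_h}_{H^1_\kappa(\Omega)}$, I would likewise split by Young's inequality; the latter is genuinely higher order since $h<H$, so after splitting it merges harmlessly into the leading $H$- and $(1+\Lambda^{-1})$-terms and does not degrade the rate.

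After absorption I expect to be left with
\[
\norm{u_h-u_H}_{H^1_\kappa(\Omega)}^2 \lesssim H\norm{u_h}_{H^1_\kappa(\Omega)}^2 + \left(1+\frac{1}{\Lambda}\right)\abs{u_h}_{H^1_\kappa(\Omega)}^2 + H\norm{\phi_h}_{H^{\frac12}_\kappa(\Gamma_C)}^2.
\]
To convert this into the stated bound I would invoke the discrete analogue of the stability estimate recorded after \eqref{Q8}, which yields $\norm{u_h}_{H^1_\kappa(\Omega)} \lesssim \norm{\kappa^{-\frac12}f}_{L^2(\Omega)}$ and hence also $\abs{u_h}_{H^1_\kappa(\Omega)} \lesssim \norm{\kappa^{-\frac12}f}_{L^2(\Omega)}$. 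Substituting these, and then taking square roots with the elementary inequality $\sqrt{a+b}\leq\sqrt{a}+\sqrt{b}$, gives
\[
\norm{u_h-u_H}_{H^1_\kappa(\Omega)} \lesssim H^{\frac12}\left(\norm{\phi_h}_{H^{\frac12}_\kappa(\Gamma_C)} + \norm{\kappa^{-\frac12}f}_{L^2(\Omega)}\right) + \left(1+\frac{1}{\Lambda}\right)^{\frac12}\norm{\kappa^{-\frac12}f}_{L^2(\Omega)},
\]
which is exactly \eqref{eq4.3}. Apart from the absorption step, every remaining manipulation is routine bookkeeping with Young's inequality and the stability estimate.
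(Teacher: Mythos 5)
Your proof is correct and takes essentially the same approach as the paper's: both substitute Lemmas \ref{hlemma4.5Ms} and \ref{hlemma4.6Ms} into the Falk-type estimate of Lemma \ref{hlemma4.1Ms}, invoke the stability bound $\|u_h\|_{H^1_\kappa(\Omega)}\lesssim \|\kappa^{-\frac{1}{2}}f\|_{L^2(\Omega)}$, and conclude by taking square roots. The only difference is cosmetic: you absorb the self-referential term $H^{\frac{1}{2}}\|\phi_h\|_{H^{\frac{1}{2}}_\kappa(\Gamma_C)}\|u_h-u_H\|_{H^1_\kappa(\Omega)}$ with an $\epsilon$-Young inequality, whereas the paper completes the square, which is the same manipulation in different clothing.
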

\begin{proof}
Using Lemma \ref{hlemma4.1Ms} with the results of Lemmas \ref{hlemma4.5Ms} and \ref{hlemma4.6Ms} yields the inequality
$$
\begin{aligned}
\|u_h-u_H\|_{H^1_\kappa(\Omega)}^2
&\lesssim H\|u_h\|_{H^1_\kappa(\Omega)}^2+\left (1+\frac{1}{\Lambda} \right )|u_h|_{H^1_\kappa(\Omega)}^2+H\left\|\phi_h\right\|_{H^{\frac12}_\kappa(\Gamma_C)} \|u_h\|_{H^1_\kappa(\Omega)}\\
&+H^{\frac{1}{2}}\|\phi_h\|_{H^{\frac12}_\kappa(\Gamma_C)}\left\|u_h-u_{H}\right\|_{H^{1}_\kappa(\Omega)}+ H^{\frac{1}{2}} \|\phi_h\|_{H^{\frac12}_\kappa(\Gamma_C)}h|u_h|_{H^1_\kappa(\Omega)} 
\end{aligned}
$$
from which, combining the fact that $\|u_h\|_{H^1_\kappa(\Omega)}\leq \|\kappa^{-\frac{1}{2}} f\|_{L^2(\Omega)}$, we derive the final estimate \eqref{eq4.3}: 
$$
\begin{aligned}
&~~~\left(\|u_h-u_H\|_{H^1_\kappa(\Omega)}-\frac{C}{2}H^{\frac12}\|\phi_h\|_{H^{\frac12}_\kappa(\Gamma_C)}\right)^2\\
&\leq C \left ( H\|u_h\|_{H^1_\kappa(\Omega)}^2+\left(1+\frac{1}{\Lambda}\right)|u_h|_{H^1_\kappa(\Omega)}^2 \right )
+CH\left(\frac{\|\phi_h\|_{H^{\frac12}_\kappa(\Gamma_C)}^2+\|u_h\|_{H^1_\kappa(\Omega)}^2}{2}\right)\\
&~~~+C\left(\frac{H\|\phi_h\|_{H^{\frac12}_\kappa(\Gamma_C)}^2+h^2|u_h|_{H^1_\kappa(\Omega)}^2}{2}\right)+ \frac{C^2}{4}H\|\phi_h\|_{H^{\frac12}_\kappa(\Gamma_C)}^2\\
& \lesssim  H \|\phi_h\|^2_{H^{\frac{1}{2}}_\kappa(\Gamma_C)}+ \left (1+\frac{1}{\Lambda}\right )|u_h|_{{H^{1}_\kappa(\Omega)}}^2+H \|u_h\|^2_{H^1_\kappa(\Omega)}.
\end{aligned}
$$
This implies that 
$$
\|u_h-u_H\|_{H^1_\kappa(\Omega)}
\lesssim H ^{\frac{1}{2}} \left ( \|\phi_h\|_{H^{\frac{1}{2}}_\kappa(\Gamma_C)}+ \|\kappa^{-\frac{1}{2}}f\|_{L^2(\Omega)}\right ) + \left (1+\frac{1}{\Lambda} \right )^{\frac{1}{2}}\|\kappa^{-\frac{1}{2}}f\|_{L^2(\Omega)}.
$$
This completes the proof. 
\end{proof}

Combining the convergence rate of $u_H$ and the variational equality \eqref{GMeq}, we get the convergence rate of $\phi_H$ as a corollary.
\begin{cor}\label{cor4.7Ms}
Let $(u_h,\phi_h)$ be the fine-scale solution of the hybrid Signorini problem \eqref{finehybridineq} and $(u_H,\phi_H)\in X_H(\Omega)\times M_H(\Gamma_C)$ be the coarse-scale solution of the hybrid Signorini problem \eqref{GMeq}. 
Then, the following estimate holds 
\begin{equation}
\begin{aligned}
\|\phi_h-\phi_H\|_{H^{-\frac{1}{2}}_\kappa(\Gamma_C)}&
\lesssim  H ^{\frac{1}{2}} \left ( \|\phi_h\|_{H^{\frac{1}{2}}_\kappa(\Gamma_C)} +\|\kappa^{-\frac{1}{2}}f\|_{L^2(\Omega)} \right )+
\left ( 1+\frac{1}{\Lambda}\right )^{\frac{1}{2}}  \|\kappa^{-\frac{1}{2}}f\|_{L^2(\Omega)}.
\end{aligned}
\end{equation}
\end{cor}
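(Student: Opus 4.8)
The plan is to combine the inf-sup condition \eqref{infsupMs} in the multiscale space with the convergence rate for $u_H$ already established in Theorem \ref{hthm4.3Ms}. The key structural fact is that $X_H(\Omega)\subset X_h(\Omega)$ by construction, so the first equation of the fine-scale system \eqref{finehybridineq} may be tested against any $v_H\in X_H(\Omega)$. Subtracting from it the first equation of the coarse-scale system \eqref{GMeq} yields the Galerkin-orthogonality-type identity
$$b(\phi_h-\phi_H, v_H)=a(u_h-u_H, v_H)\quad \tforall v_H\in X_H(\Omega),$$
which relates the multiplier error to the already-controlled displacement error.

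Next I would introduce an arbitrary $\psi_H\in W_H(\Gamma_C)$ and split by the triangle inequality,
$$\|\phi_h-\phi_H\|_{H^{-\frac12}_\kappa(\Gamma_C)}\leq \|\phi_h-\psi_H\|_{H^{-\frac12}_\kappa(\Gamma_C)}+\|\psi_H-\phi_H\|_{H^{-\frac12}_\kappa(\Gamma_C)}.$$
Since $\phi_H\in M_H(\Gamma_C)\subset W_H(\Gamma_C)$, the difference $\psi_H-\phi_H$ lies in $W_H(\Gamma_C)$, so the inf-sup condition \eqref{infsupMs} bounds its norm by a supremum of $b(\psi_H-\phi_H,v_H)/\|v_H\|_{H^1_\kappa(\Omega)}$ over $v_H\in X_H(\Omega)$. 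Writing $b(\psi_H-\phi_H,v_H)=b(\psi_H-\phi_h,v_H)+b(\phi_h-\phi_H,v_H)$ and substituting the orthogonality identity into the second term, I would bound the first term by the duality pairing together with the trace theorem ($\|v_H|_{\Gamma_C}\|_{H^{\frac12}_\kappa(\Gamma_C)}\lesssim \|v_H\|_{H^1_\kappa(\Omega)}$) and the second by the continuity of $a(\cdot,\cdot)$. This gives
$$\|\psi_H-\phi_H\|_{H^{-\frac12}_\kappa(\Gamma_C)}\lesssim \|\psi_H-\phi_h\|_{H^{-\frac12}_\kappa(\Gamma_C)}+\|u_h-u_H\|_{H^1_\kappa(\Omega)},$$
and, after combining with the triangle inequality and taking the infimum over $\psi_H$,
$$\|\phi_h-\phi_H\|_{H^{-\frac12}_\kappa(\Gamma_C)}\lesssim \inf_{\psi_H\in W_H(\Gamma_C)}\|\phi_h-\psi_H\|_{H^{-\frac12}_\kappa(\Gamma_C)}+\|u_h-u_H\|_{H^1_\kappa(\Omega)}.$$

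To close, I would take $\psi_H=\pi_H\phi_h$ and invoke the first approximation estimate in \eqref{eq3.6}, giving $\inf_{\psi_H}\|\phi_h-\psi_H\|_{H^{-\frac12}_\kappa(\Gamma_C)}\leq H\|\phi_h\|_{H^{\frac12}_\kappa(\Gamma_C)}$. Inserting the bound of Theorem \ref{hthm4.3Ms} for $\|u_h-u_H\|_{H^1_\kappa(\Omega)}$ and absorbing the lower-order term $H\|\phi_h\|_{H^{\frac12}_\kappa(\Gamma_C)}$ into $H^{\frac12}\|\phi_h\|_{H^{\frac12}_\kappa(\Gamma_C)}$ (valid for $H\leq 1$) produces the claimed estimate. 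I expect the only delicate point to be the correct deployment of the inf-sup condition on the discrete difference $\psi_H-\phi_H$: one must keep $\psi_H$ inside $W_H(\Gamma_C)$ so that \eqref{infsupMs} is applicable, while still comparing against the fine-scale multiplier $\phi_h$ through the duality pairing. Everything after that reduces to continuity, trace, and the projection bounds already available.
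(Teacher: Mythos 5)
Your proof is correct, and it follows the same skeleton as the paper's own argument: the Galerkin-orthogonality identity $b(\phi_h-\phi_H,v_H)=a(u_h-u_H,v_H)$ for $v_H\in X_H(\Omega)$ (which the paper writes in the equivalent form $b(\phi_H-\psi_H,v_H)=a(u_H-u_h,v_H)+b(\phi_h-\psi_H,v_H)$), the triangle inequality, the inf-sup condition \eqref{infsupMs} applied to the discrete difference $\psi_H-\phi_H$, and finally the displacement bound of Theorem \ref{hthm4.3Ms}. The one genuine difference is the last, approximation step. The paper takes the infimum over $\psi_H\in M_H(\Gamma_C)$ and therefore must choose $\psi_H=r_H\phi_h$, where $r_H$ is a non-standard Cl\'ement-type interpolation operator preserving non-negativity, and it bounds $\|\phi_h-r_H\phi_h\|_{H^{-\frac12}_\kappa(\Gamma_C)}$ by the $L^2_\kappa(\Gamma_C)$ norm to obtain the rate $H^{\frac12}\|\phi_h\|_{H^{\frac12}_\kappa(\Gamma_C)}$. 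You observe, correctly, that the non-negativity of $\psi_H$ is never used in this corollary (in contrast to Lemma \ref{hlemma4.6Ms}, where it is essential), so the infimum may be taken over all of $W_H(\Gamma_C)$; this frees you to choose $\psi_H=\pi_H\phi_h$, the $L^2_\kappa(\Gamma_C)$-orthogonal projection, whose estimate \eqref{eq3.6} is already established in the paper and even gives the sharper rate $H\|\phi_h\|_{H^{\frac12}_\kappa(\Gamma_C)}$ for this term, which you then weaken to $H^{\frac12}$ for $H\leq 1$. What this buys is a modest but real simplification: the appeal to the external non-negativity-preserving Cl\'ement operator disappears from the corollary, and every ingredient of the proof becomes internal to the paper; the final estimate is unchanged.
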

\begin{proof}
Let $\psi_H\in M_H(\Gamma_C)$ and $v_H\in X_H(\Omega)$. Then
$$
b\left(\phi_{H}-\psi_{H}, v_{H}\right)=a\left(u_{H}, v_{H}\right)-L\left(v_{H}\right)-b\left(\psi_{H}, v_{H}\right)=a\left(u_{H}-u_h, v_{H}\right)+b\left(\phi_h-\psi_{H}, v_{H}\right).
$$
On account of the \textit{inf-sup} condition and the triangle inequality, we have 
$$
\begin{aligned}
\|\phi_h-\phi_H\|_{H^{-\frac12}_\kappa(\Gamma_C)}&
\leq \|\phi_h-\psi_H\|_{H^{-\frac12}_\kappa(\Gamma_C)} +\|\psi_H-\phi_H\|_{H^{-\frac12}_\kappa(\Gamma_C)}\\
&\leq \frac{1}{\gamma}\sup_{v_H\in X_H(\Omega)} \frac{b(\phi_H-\psi_H,v_H)}{\|v_H\|_{H^1_\kappa(\Omega)}}+ \|\phi_h-\psi_H\|_{H^{-\frac12}_\kappa(\Gamma_C)}\\
&\leq\frac{1}{\gamma}\sup_{v_H\in X_H(\Omega)} \frac{a\left(u_{H}-u_h, v_{H}\right)+b\left(\phi_h-\psi_{H}, v_{H}\right)}{\|v_H\|_{H^1_\kappa(\Omega)}}+ \|\phi_h-\psi_H\|_{H^{-\frac12}_\kappa(\Gamma_C)}\\
&\leq\frac{1}{\gamma}\sup_{v_H\in X_H(\Omega)} \frac{\|u_{H}-u_h\|_{H^1_\kappa(\Omega)} \|v_{H}\|_{H^1_\kappa(\Omega)}+\|\phi_h-\psi_{H}\|_{H^{-\frac12}_\kappa(\Gamma_C)}\|v_{H}\|_{H^{\frac12}_\kappa(\Gamma_C)}}{\|v_H\|_{H^1_\kappa(\Omega)}}\\
&~~~+ \|\phi_h-\psi_H\|_{H^{-\frac12}_\kappa(\Gamma_C)}.
\end{aligned}$$
Since $\psi_H \in M_H(\Gamma_C)$ is arbitrary, we have 
$$\|\phi_h-\phi_H\|_{H^{-\frac12}_\kappa(\Gamma_C)}
\lesssim \|u_h-u_H\|_{H^1_\kappa(\Omega)}+\inf_{\psi_H\in M_H(\Gamma_C)}\|\phi_h-\psi_H\|_{H^{-\frac12}_\kappa(\Gamma_C)}.$$
The estimate is completed by taking $\psi_H=r_H\phi_h\in M_H(\Gamma_C)$ (where $r_H$ is a non-standard Cl\'ement type interpolation operator preserving non-negativity \cite{clement}) and using Theorem \ref{hthm4.3Ms}. As a result, we obtain 
$$
\begin{aligned}
\|\phi_h-\phi_H\|_{H^{-\frac12}_\kappa(\Gamma_C)}
& \lesssim \|u_h-u_H\|_{H^1_\kappa(\Omega)}+\inf_{\psi_H\in M_H(\Gamma_C)}\|\phi_h-\psi_H\|_{H^{-\frac12}_\kappa(\Gamma_C)}\\
&\lesssim \|u_h-u_H\|_{H^1_\kappa(\Omega)}+\|\phi_h-r_H\phi_h\|_{L^2_\kappa(\Gamma_C)}\\
&\lesssim \|u_h-u_H\|_{H^1_\kappa(\Omega)}+H^{\frac12}\|\phi_h\|_{H^{\frac12}_\kappa(\Gamma_C)}\\
&\lesssim H ^{\frac{1}{2}} \left ( \|\phi_h\|_{H^{\frac{1}{2}}_\kappa(\Gamma_C)} +\|\kappa^{-\frac{1}{2}}f\|_{L^2(\Omega)} \right )+
\left ( 1+\frac{1}{\Lambda}\right )^{\frac{1}{2}}  \|\kappa^{-\frac{1}{2}}f\|_{L^2(\Omega)}.
\end{aligned}
$$
This completes the proof. 
\end{proof}

\section{Numerical Results} \label{sec:numerics}

In this section, we present some numerical examples to demonstrate the effectiveness and efficiency of the proposed multiscale method. We set the computational domain and boundary to be $\Omega = (0,1)^2$,  $\Gamma_D = [0,1] \times \{ 1 \}$, $\Gamma_C = [0,1] \times \{ 0 \}$, and $\Gamma_N = \partial \Omega \setminus (\Gamma_D \cup \Gamma_C)$. We partition the domain into $16 \times 16$ square elements with mesh size $H = \sqrt{2}/16$ to form the coarse grid $\mathcal{T}^H$. For each coarse block, we partition it into $16 \times 16$ square elements so that the mesh size $h$ of the fine grid $\mathcal{T}^h$ is $h = \sqrt{2}/256$. We remark that we use bilinear element on this fine grid to compute the reference solution $u_h$. In the following, we compute the relative $L^2$ and energy errors defined as follows 
$$ e_{L^2} := \frac{\norm{u_h - u_{\text{ms}}}_{L^2(\Omega)}}{\norm{u_h}_{L^2(\Omega)}} \quad \text{and} \quad e_{a} := \frac{\norm{u_h - u_{\text{ms}}}_{H_{\kappa}^1(\Omega)}}{\norm{u_h}_{H_{\kappa}^1(\Omega)}}$$
to measure the performance of the multiscale method. We remark that when performing the primal-dual active set iterations, we set $c = 1$ and the maximal number of iterations is set to be $12$. 

\begin{example} \label{exp1}
In the first example, we consider a highly heterogeneous permeability field $\kappa$ in the domain as shown in Figure \ref{fig:kappa-exp1} (left). The background value is $1$ (i.e., the blue region) and the contrast value in the channels and inclusions is $10^4$ (i.e., the yellow region). The permeability field is a piecewise constant function on the fine-grid elements. The source function $f$ is set to be $f(x_1, x_2) = \sin(2\pi x_1) \sin(2 \pi x_2)$ for any $(x_1, x_2) \in \Omega$. 
\end{example}

\begin{figure}[htbp]
    \centering
    \mbox{
    \includegraphics[width = 2.8in]{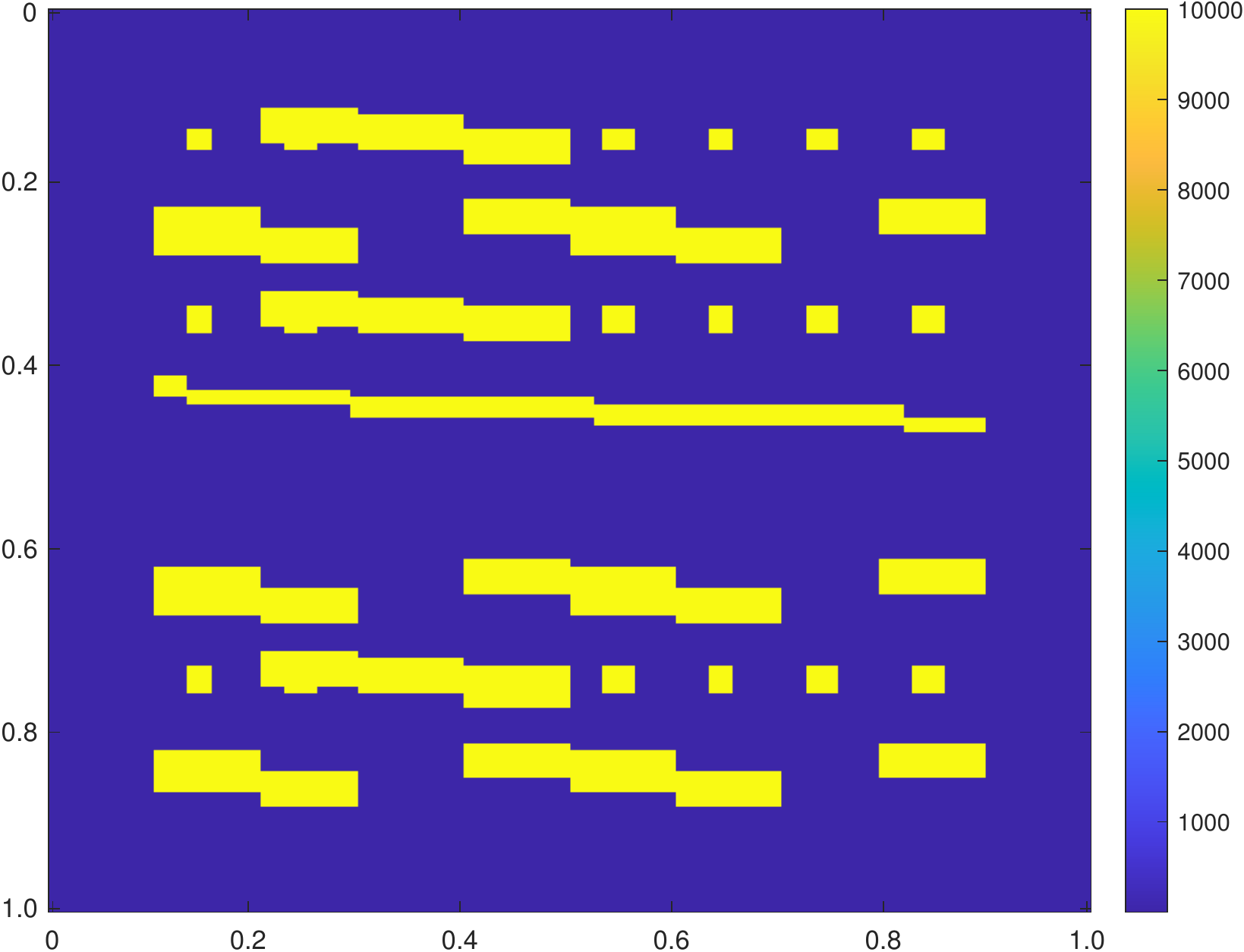}
    \includegraphics[width = 2.8in]{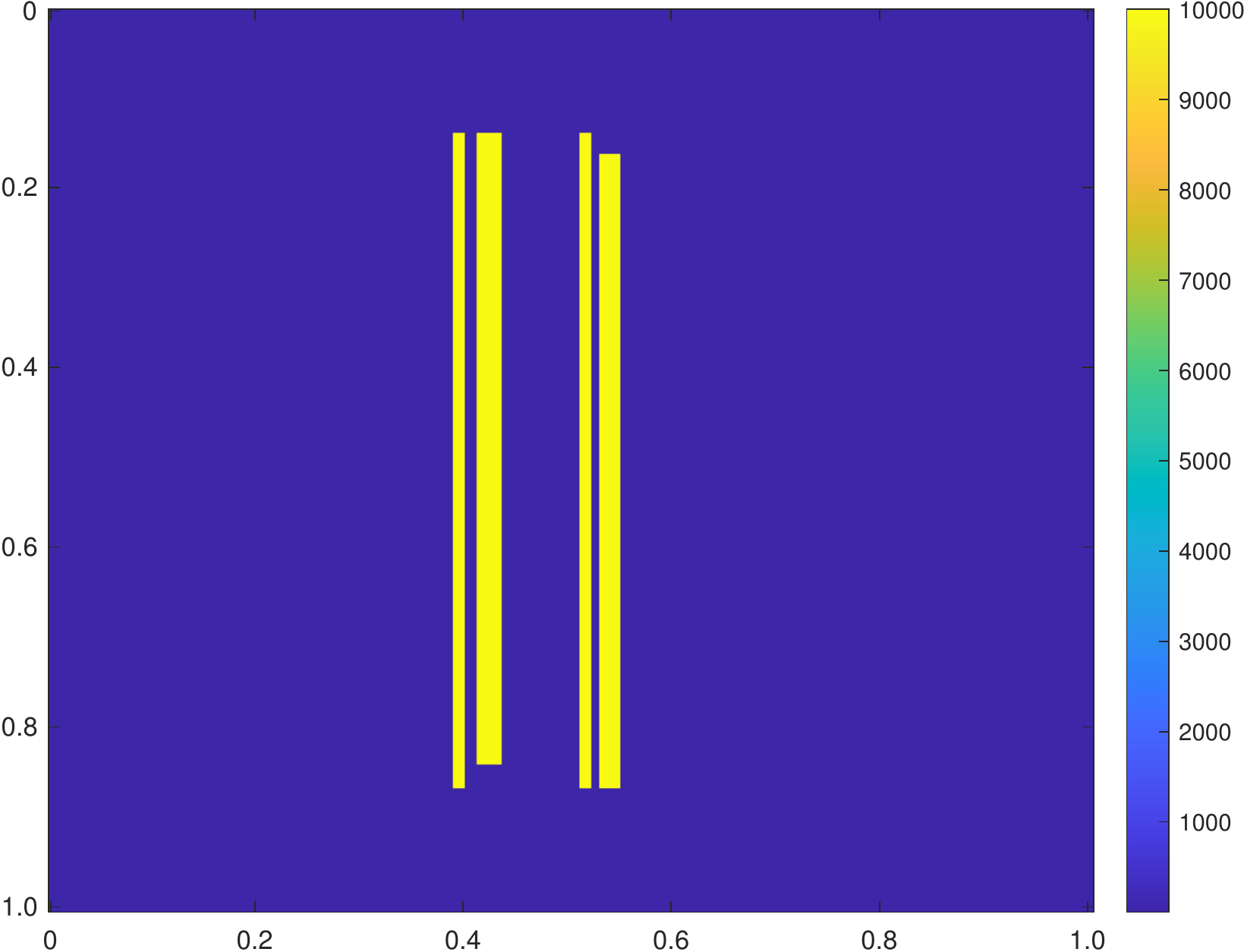}
    }
    \caption{The permeability field $\kappa$ in Example \ref{exp1} (left) and Example \ref{exp2} (right).}
    \label{fig:kappa-exp1}
\end{figure}

We present the numerical results using different numbers of local basis functions to form the multiscale space. In Table \ref{tab:error-exp1}, we present the $L^2$ and energy errors with different setting of numbers of local basis functions. The {\it coarse DOF} represents the dimension of the multiscale space. We also present the errors against the number of local basis functions in Figure \ref{fig:error-plot-exp1}. The solution profiles of the reference and multiscale solutions are plotted in Figure \ref{fig:sol-pro-exp1}. We remark that the larger number of local basis functions is, the larger the parameter $\Lambda$ is. From the results below, one can observe the convergence in terms of number of local basis functions. When five local basis functions are used, the relative $L^2$ error is around the level of $1\%$ while the relative energy error is approximately $10\%$. 

\begin{table}[htbp]
    \centering
    \begin{tabular}{c|ccccc}
    \hline \hline 
    Number of local bases & 1 & 2 & 3 & 4 & 5 \\
    \hline 
    Coarse DOF & $289$ & $514$ & $739$ & $964$ & $1189$ \\ 
    \hline 
    Energy error $e_{a}$ & $87.6568\%$ & $26.3642\%$ & $18.8316\%$ & $14.1339\%$ & $10.0682\%$ \\    
    \hline 
    $L^2$ error $e_{L^2}$ & $78.8863\%$ & $7.5842\%$ & $4.2161\%$ & $2.4489\%$ & $1.3033\%$ \\
    \hline \hline 
    \end{tabular}
    \caption{Numerical results in Example \ref{exp1}.}
    \label{tab:error-exp1}
\end{table}

\begin{figure}[htbp]
    \mbox{
    \includegraphics[width = 2.8in]{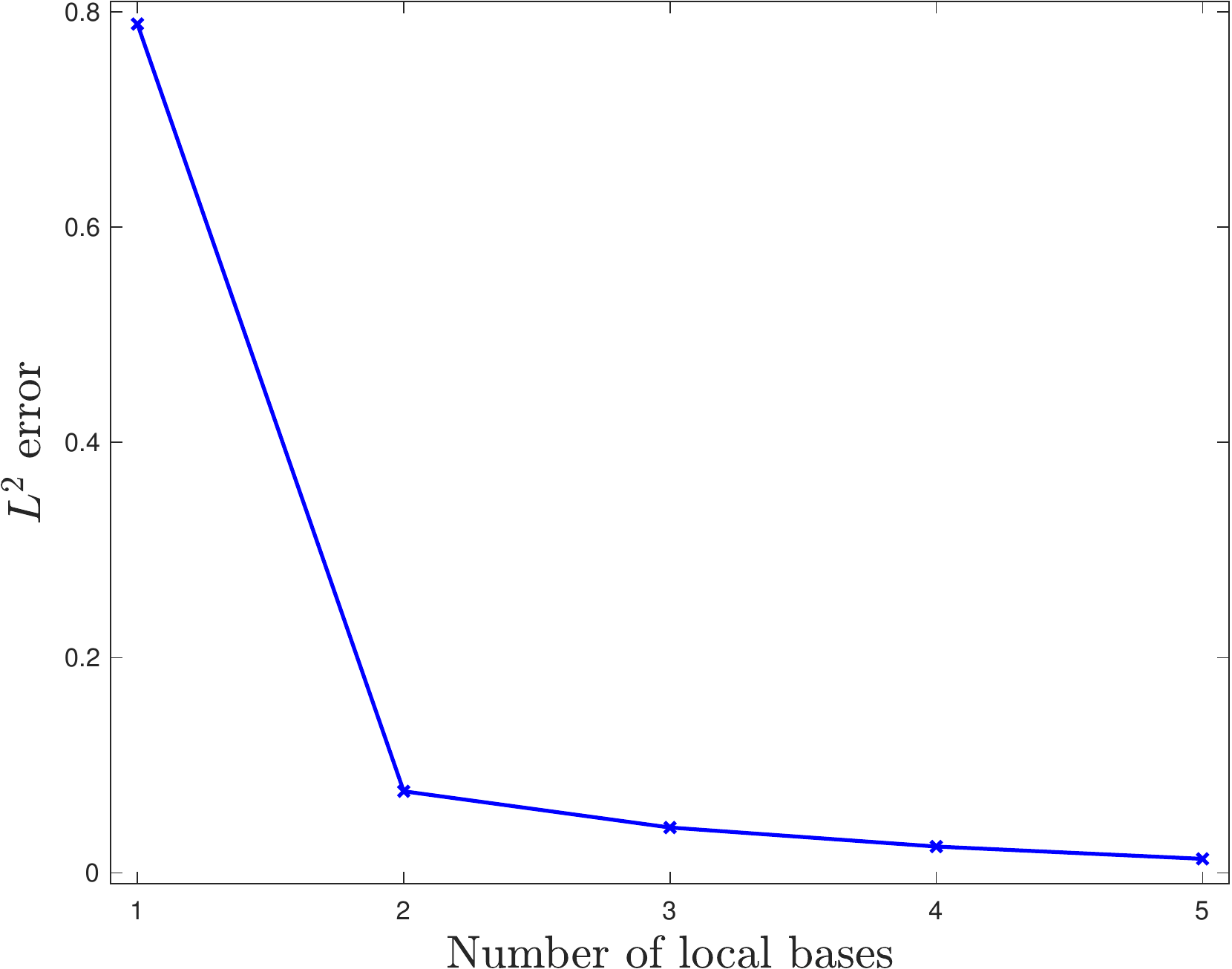}
    \includegraphics[width = 2.8in]{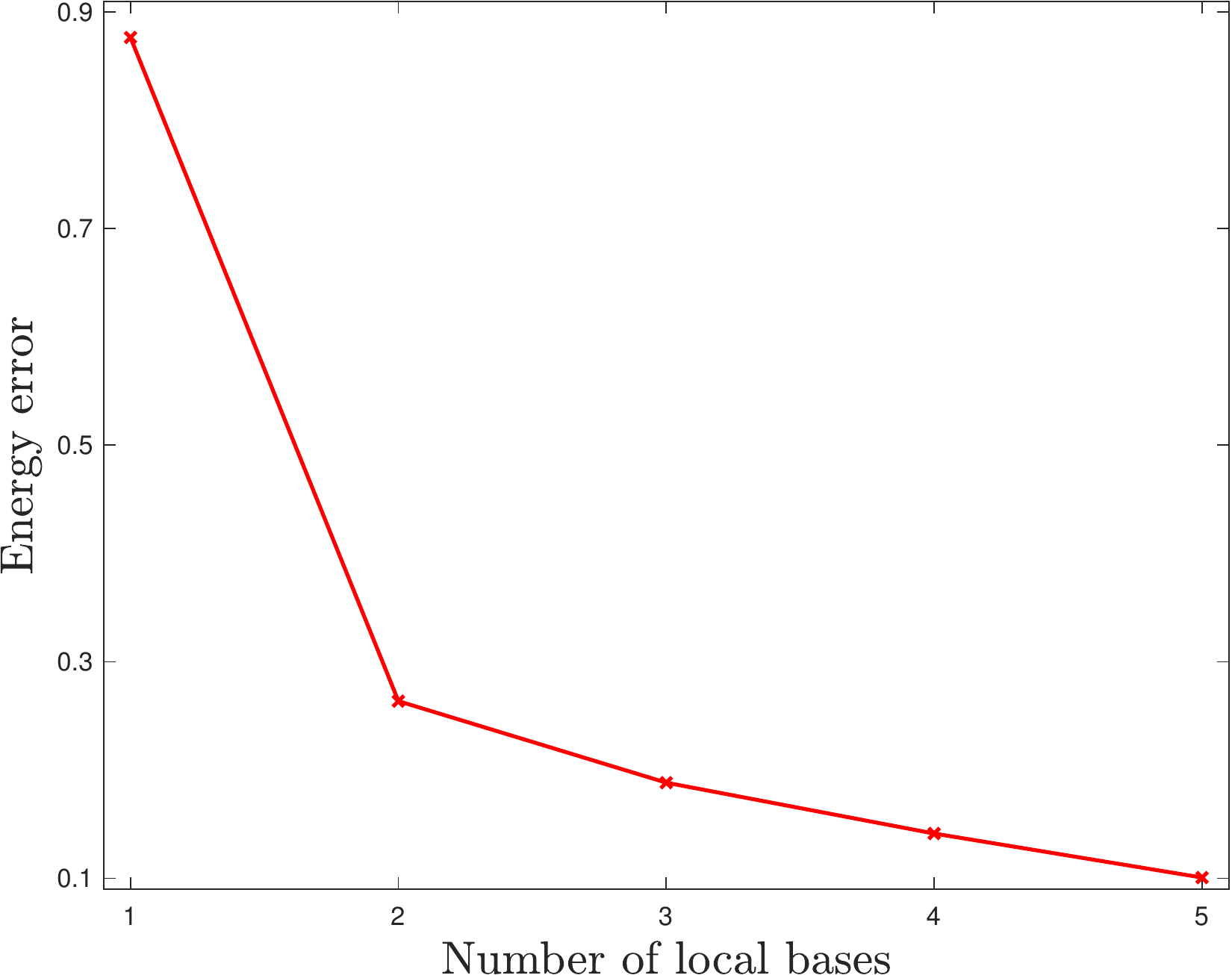}
    }
    \caption{The $L^2$ and energy errors against the number of local bases (Example \ref{exp1}).}
    \label{fig:error-plot-exp1}
\end{figure}

\begin{figure}[htbp]
    \mbox{
    \includegraphics[width = 2.8in]{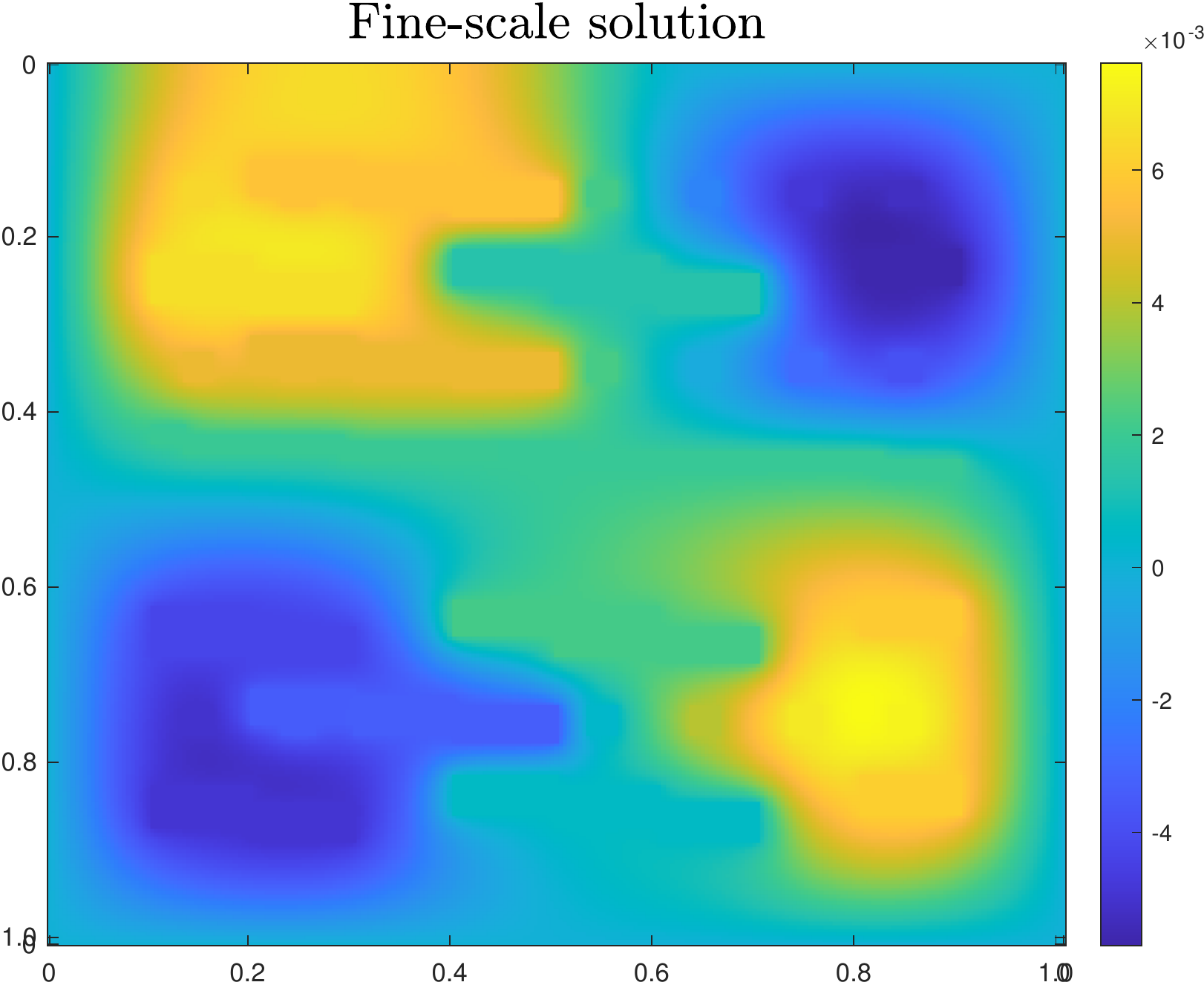}
    \includegraphics[width = 2.8in]{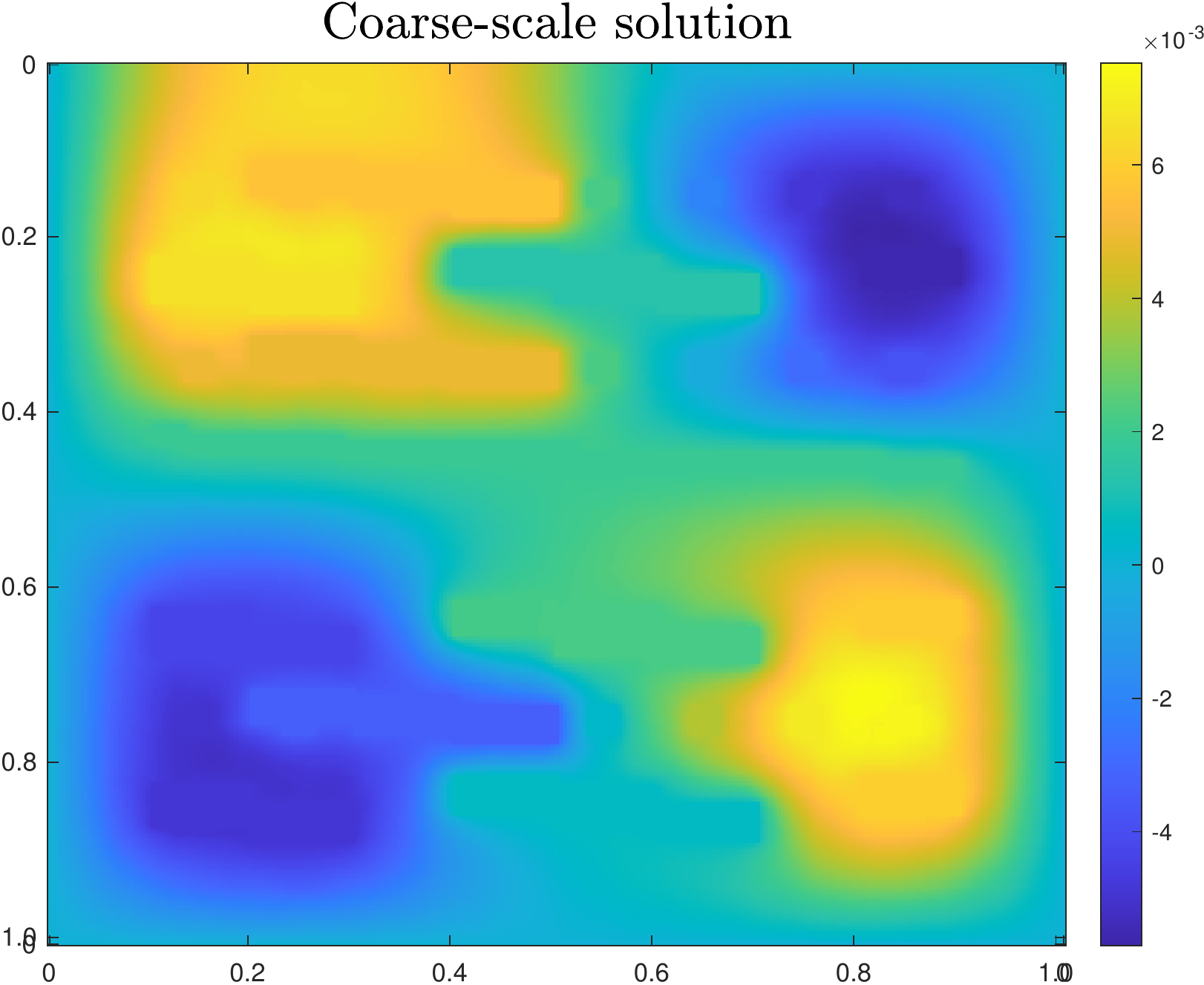}
    }
    \caption{The solution profile of Example \ref{exp1}. Left: $u_h$; right: $u_{\text{ms}}$.}
    \label{fig:sol-pro-exp1}
\end{figure}

\begin{example}
\label{exp2} 
In the second example, we consider a channelized permeability field $\kappa$ depicted in Figure \ref{fig:kappa-exp1} (right). In the background region (i.e., the blue region), the value of the channelized field is $1$. On the other hand, the value of permeability field in the channelized region (i.e., the yellow region) is $10^4$. The source function $f$ is the same as in Example \ref{exp1}. 
\end{example}

In Table \ref{tab:error-exp2}, we present the $L^2$ and energy errors with different setting of numbers of local basis functions. 
We present the errors against the number of local basis functions in Figure \ref{fig:error-plot-exp2}. The solution profiles of the reference and multiscale solutions are plotted in Figure \ref{fig:sol-pro-exp2}. 
From the results below, one can observe the convergence in terms of number of local basis functions in this example. 
In this case, one has to include (at least) two basis functions in the local multiscale space to ensure good approximation property of the multiscale method. 
When five local basis functions are used, the relative $L^2$ error is around the level of $0.5\%$ while the relative energy error is approximately $6\%$. Overall, these numerical results demonstrate that the proposed multiscale method provides a coarse-grid model with certain level of accuracy for the Signorini  problem.

\begin{table}[htbp]
    \centering
    \begin{tabular}{c|ccccc}
    \hline \hline 
    Number of local bases & 1 & 2 & 3 & 4 & 5 \\
    \hline 
    Coarse DOF & $289$ & $514$ & $739$ & $964$ & $1189$ \\ 
    \hline 
    Energy error $e_{a}$ & $27.5114\%$ & $26.3899\%$ & $6.9410\%$ & $6.7407\%$ & $6.1470\%$ \\
    \hline 
    $L^2$ error $e_{L^2}$ & $8.4982\%$ & $8.0200\%$ & $0.6928\%$ & $0.6597\%$ & $0.5693\%$ \\
    \hline \hline 
    \end{tabular}
    \caption{Numerical results in Example \ref{exp2}.}
    \label{tab:error-exp2}
\end{table}

\begin{figure}[htbp]
    \mbox{
    \includegraphics[width = 2.8in]{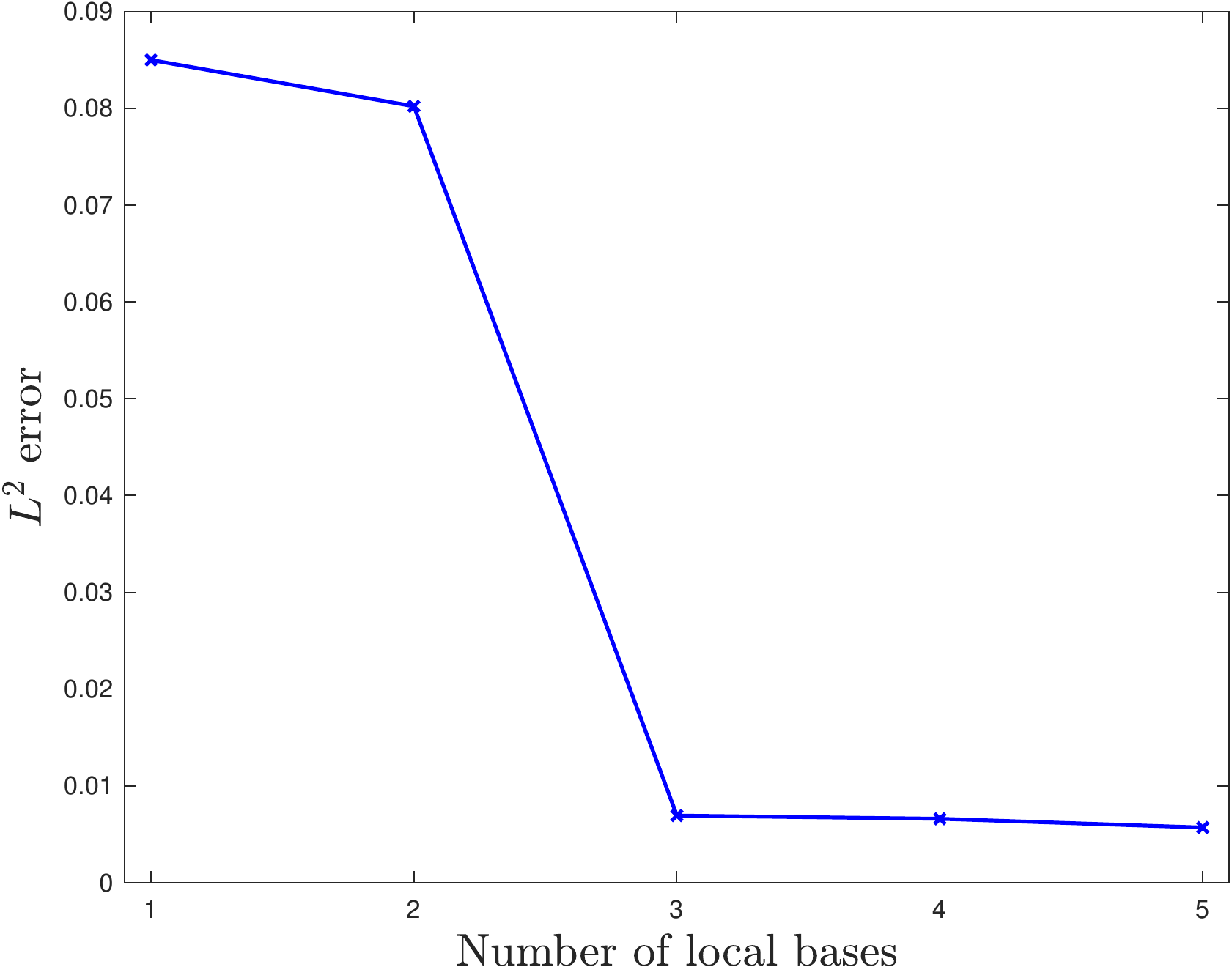}
    \includegraphics[width = 2.8in]{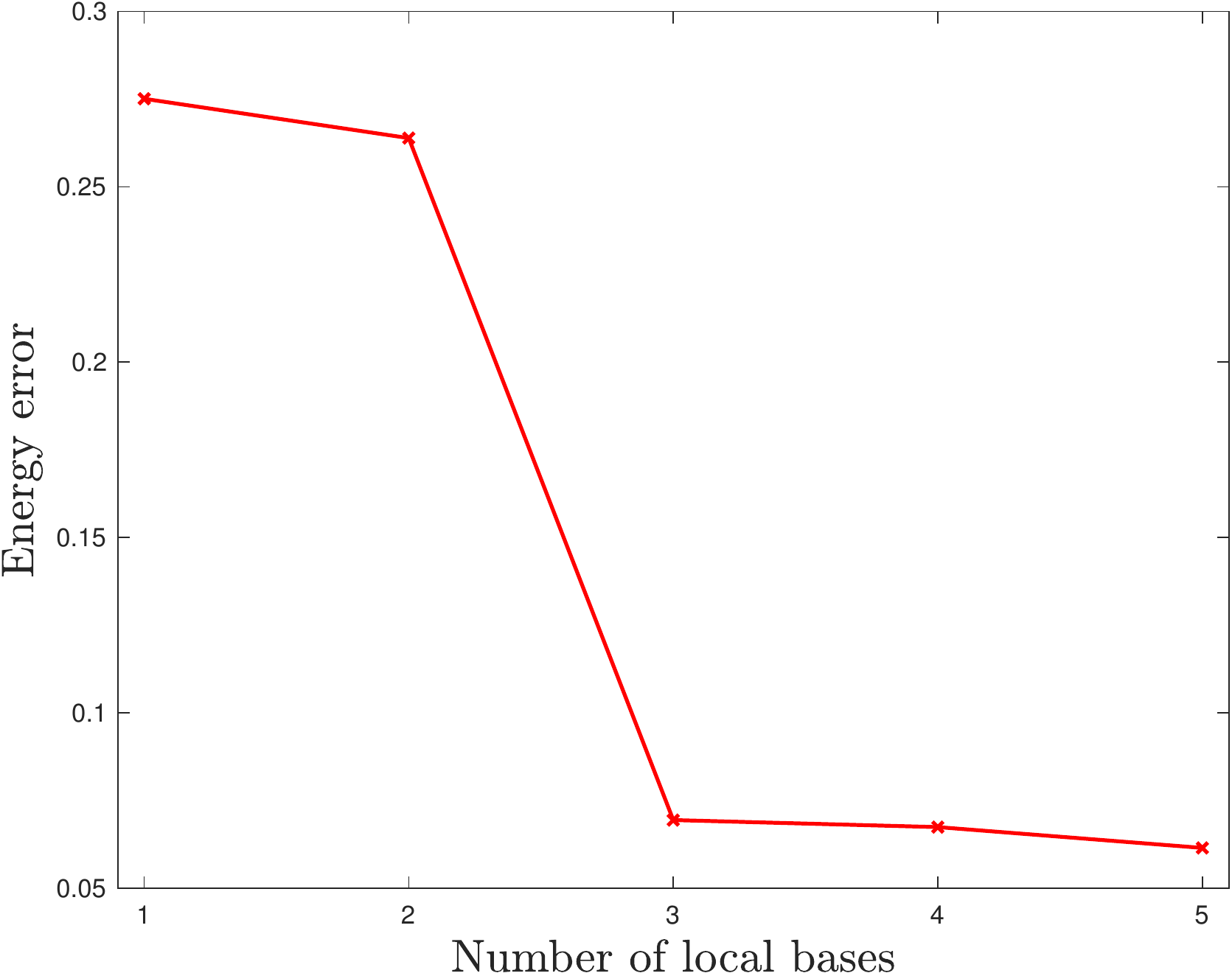}
    }
    \caption{The $L^2$ and energy errors against the number of local bases (Example \ref{exp2}).}
    \label{fig:error-plot-exp2}
\end{figure}

\begin{figure}[htbp]
    \mbox{
    \includegraphics[width = 2.8in]{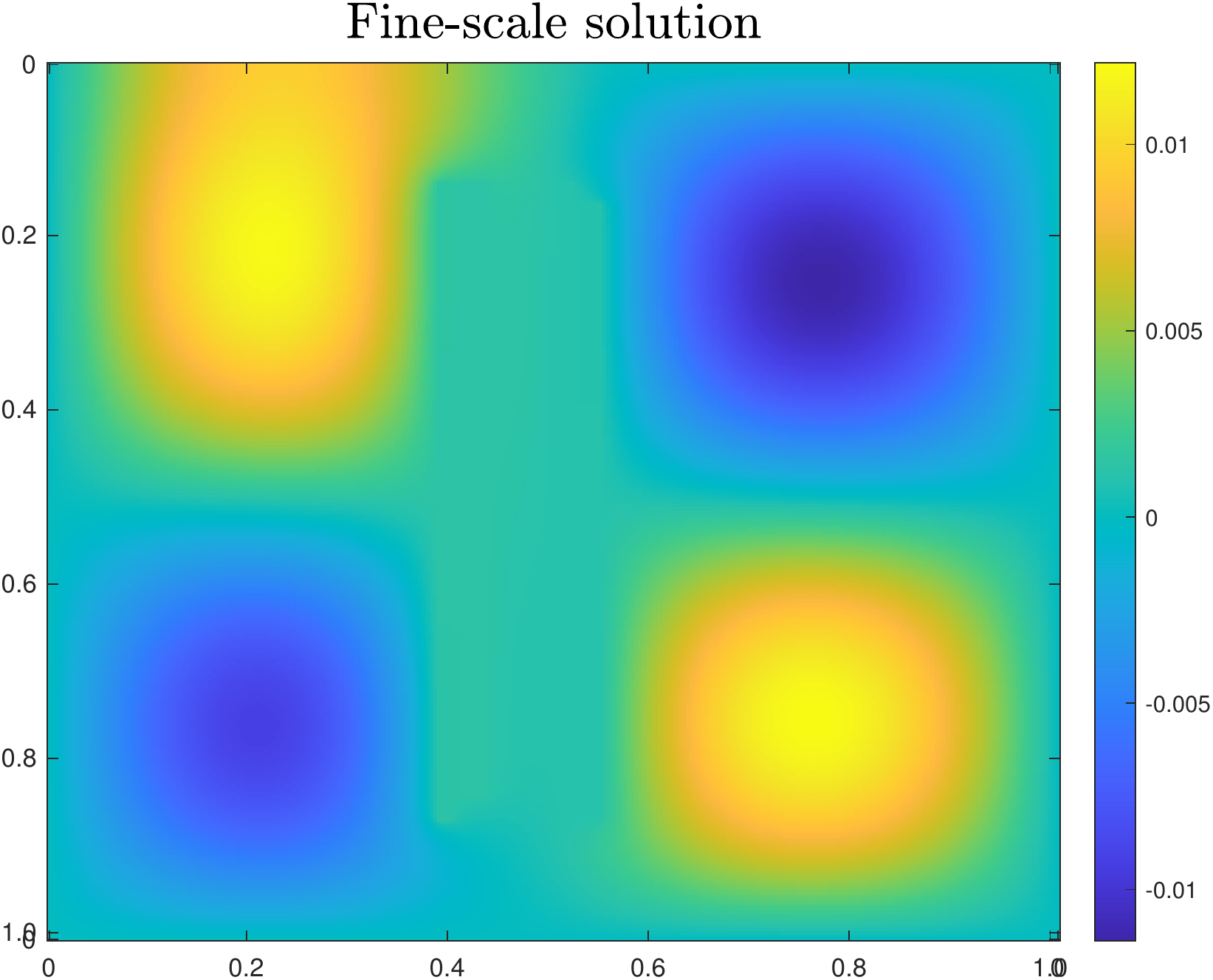}
    \includegraphics[width = 2.8in]{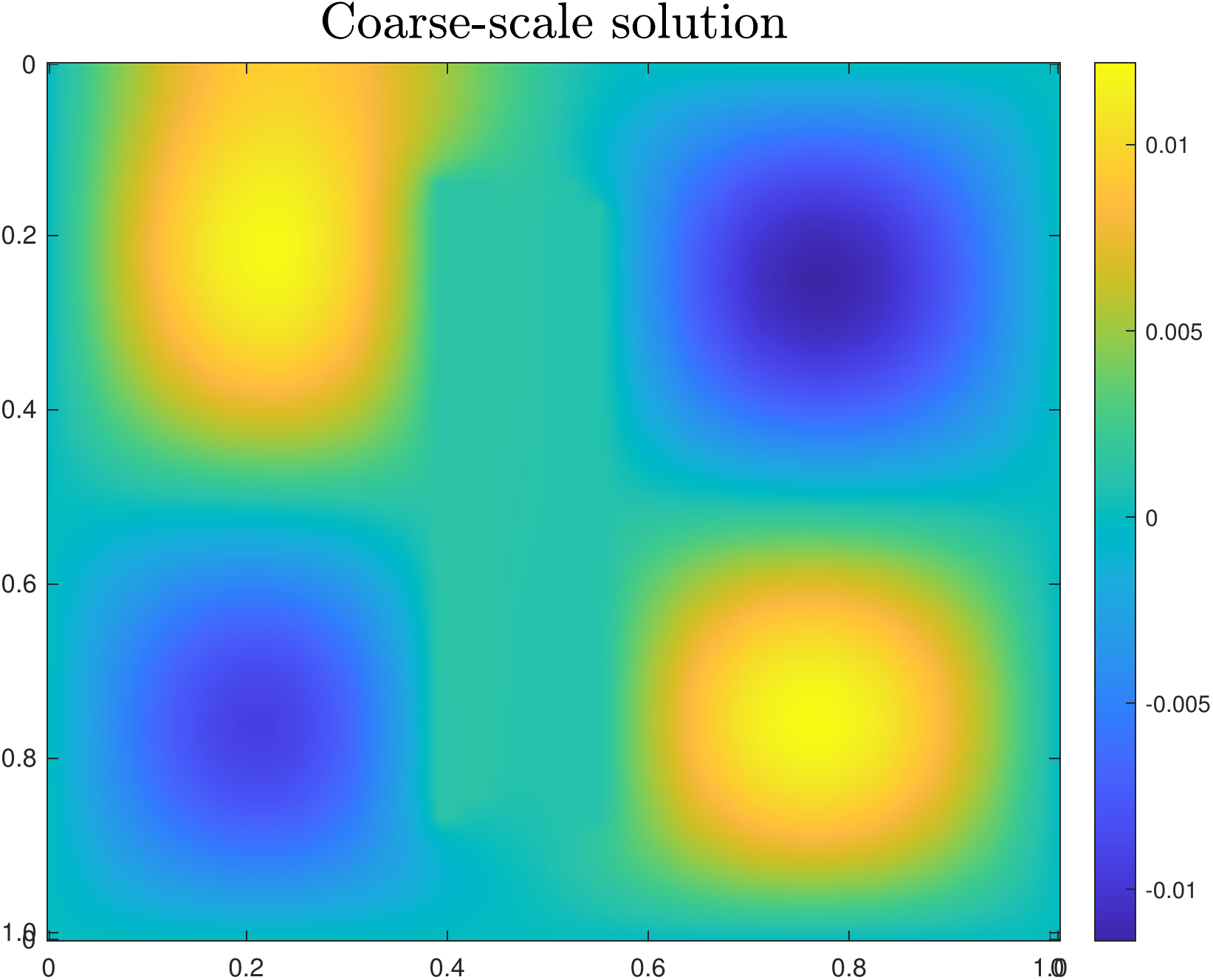}
    }
    \caption{The solution profile of Example \ref{exp2}. Left: $u_h$; right: $u_{\text{ms}}$.}
    \label{fig:sol-pro-exp2}
\end{figure}

\section{Conclusion} \label{sec:conclusion}
In this work, we developed a multiscale method for the hybrid Signorini problem with a heterogeneous permeability field based on the framework of GMsFEM. 
The hybrid formulation allows naturally the use of unilateral conditions. The construction of the multiscale basis functions is based on a space reduction method using a local spectral problem. 
We also add a set of additional basis functions to the multiscale space to ensure the unilateral boundary condition.  
This set of additional basis is obtained by solving local Dirichlet problems along the contact boundary. We provided a complete error analysis for the method. 
Several numerical results were presented to validate the analytical spectral estimate. 

\bibliographystyle{unsrt}
\bibliography{SignoriniPb}

\begin{appendices}
\section{Trace and inverse trace theorems in weighted norm}\label{sec:trace}
For $x=(x_1,\cdots, x_n)\in \mathbb{R}^n$ we denote its shift  $x^\prime:=(x_1,\cdots,x_{n-1}) \in \mathbb{R}^{n-1}$. 
Then, we define for $v\in C^\infty_c(\mathbb{R}^n)$ its trace onto the hyperplane $\mathbb{R}^{n-1}\times\{0\}$ by 
$\gamma_0 v(x^\prime):= v(x^\prime, 0)$ with $x^\prime \in \mathbb{R}^{n-1}$. 
Recall that the Fourier transform $\mathcal{F}$ is defined by
$$
\hat{v}(\xi):= \mathcal{F} v (\xi) := \int_{\mathbb{R}^n }\exp^{-i 2\pi\xi\cdot x} v(x) dx~~~ (\xi\in \mathbf{\mathbb{R}}^n)
$$

We use the Fourier transform to redefine the norms in weighted Sobolev space $H^s_\kappa(\mathbb{R}^n)$ :
$$
\|v\|_{H^s_\kappa(\mathbb{R}^n)}=\int_{\mathbb{R}^n}\kappa({\xi})\left(1+|\xi|^{2}\right)^{s}\left|\hat{v}\left(\xi\right)\right|^{2} d \xi
$$
where we extend $\kappa(x)$ by $0$ outside the domain $\Omega$.

For a bounded Lipschitz domain $\Omega$, we define $\|v\|_{H^s_\kappa(\Omega)}:= \|v_0\|_{H^s_\kappa(\mathbb{R}^n)}$ where $v_
0$ denotes the extension of $v$ by $0$ onto $\mathbb{R}^n\setminus \Omega$.


In the weighted trace theorem below, we use the weighted Sobolev norm defined through Fourier transform. We explain later that it is equivalent to the usual weighted Sobolev norm.

\begin{thm}Assume that a weight function $\kappa$ defined on the upper half space of $\mathbb{R}^n$ such that the following condition holds for some positive constant $C_0$:  $$\frac{\kappa({\xi}^\prime, 0)}{\kappa({\xi})}\leq C_0 \quad \text{for any} ~ \xi \in \mathbb{R}^{n} ~ \text{with} ~ \kappa(\xi) \neq 0.$$
Then, for $s>\frac{1}{2}$, there exists a unique extension of the trace operator $\gamma_0$ to a bounded linear operator
$$
\gamma_0 : H^s_\kappa(\mathbb{R}^n)\to H^{s-\frac{1}{2}}_\kappa(\mathbb{R}^{n-1}).
$$
\end{thm}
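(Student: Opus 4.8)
The plan is to prove the bound $\|\gamma_0 v\|_{H^{s-\frac12}_\kappa(\mathbb{R}^{n-1})} \lesssim \|v\|_{H^s_\kappa(\mathbb{R}^n)}$ for $v$ in the dense subspace $C_c^\infty(\mathbb{R}^n)$, and then obtain the asserted operator by the standard extension-by-density argument; uniqueness of the bounded extension is automatic once $C_c^\infty(\mathbb{R}^n)$ is dense and the target space is complete. The structural identity driving everything is the Fourier slice formula. Writing $\xi = (\xi', \xi_n)$ with $\xi' \in \mathbb{R}^{n-1}$, Fourier inversion evaluated at $x_n = 0$ gives $\gamma_0 v(x') = \int_{\mathbb{R}^{n-1}} e^{i 2\pi \xi'\cdot x'}\big(\int_\mathbb{R}\hat v(\xi', \xi_n)\,d\xi_n\big)\,d\xi'$, so that the $(n-1)$-dimensional Fourier transform of the trace is the partial integral
\[
\widehat{\gamma_0 v}(\xi') = \int_\mathbb{R} \hat v(\xi', \xi_n)\,d\xi_n.
\]

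Next I would derive a pointwise estimate in the boundary frequency $\xi'$. Inserting the factor $(1+|\xi|^2)^{s/2}(1+|\xi|^2)^{-s/2}$ into the slice integral and applying Cauchy--Schwarz in $\xi_n$ yields
\[
\abs{\widehat{\gamma_0 v}(\xi')}^2 \le \Big(\int_\mathbb{R}(1+|\xi'|^2+\xi_n^2)^{-s}\,d\xi_n\Big)\Big(\int_\mathbb{R}(1+|\xi|^2)^s\,\abs{\hat v(\xi',\xi_n)}^2\,d\xi_n\Big).
\]
The first factor is the heart of the matter: the substitution $\xi_n = (1+|\xi'|^2)^{1/2}t$ gives $\int_\mathbb{R}(1+|\xi'|^2+\xi_n^2)^{-s}\,d\xi_n = C_s\,(1+|\xi'|^2)^{\frac12-s}$ with $C_s := \int_\mathbb{R}(1+t^2)^{-s}\,dt$, and this one-dimensional integral is finite exactly because $s > \frac12$. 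This is precisely where the hypothesis $s>\frac12$ is consumed. Multiplying by the target weight $(1+|\xi'|^2)^{s-\frac12}$ then cancels the powers of $(1+|\xi'|^2)$ entirely.

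Finally I would transfer the boundary weight $\kappa(\xi',0)$ to the bulk weight $\kappa(\xi)$. Multiplying the previous bound by $\kappa(\xi',0)(1+|\xi'|^2)^{s-\frac12}$ and integrating over $\xi'$, the hypothesis $\kappa(\xi',0) \le C_0\,\kappa(\xi)$ lets me replace the $\xi_n$-independent factor $\kappa(\xi',0)$ by $C_0\,\kappa(\xi',\xi_n)$ inside the inner integral; Fubini then recombines the double integral into
\[
\|\gamma_0 v\|_{H^{s-\frac12}_\kappa(\mathbb{R}^{n-1})}^2 \le C_s C_0 \int_{\mathbb{R}^n}\kappa(\xi)(1+|\xi|^2)^s\abs{\hat v(\xi)}^2\,d\xi = C_s C_0\,\|v\|_{H^s_\kappa(\mathbb{R}^n)}^2,
\]
which is the desired trace inequality with constant $\sqrt{C_s C_0}$. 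The main obstacle is exactly this weight-transfer step, since the hypothesis only furnishes $\kappa(\xi',0)\le C_0\kappa(\xi)$ on the set where $\kappa(\xi)\neq 0$: on the complementary set the bulk weight vanishes, so I would argue that those slices contribute nothing to the $H^s_\kappa(\mathbb{R}^n)$ norm and may be excluded, equivalently carrying out the whole estimate on the support of $\kappa$. A secondary technical point is justifying that $C_c^\infty(\mathbb{R}^n)$ is dense in $H^s_\kappa(\mathbb{R}^n)$ for the final extension, which follows from the Fourier-transform definition of the norm together with the standing assumptions on $\kappa$.
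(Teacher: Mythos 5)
Your proof is correct and follows essentially the same route as the paper: the Fourier slice identity $\widehat{\gamma_0 v}(\xi')=\int_{\mathbb{R}}\hat v(\xi',\xi_n)\,d\xi_n$, Cauchy--Schwarz in $\xi_n$, the substitution $\xi_n=(1+|\xi'|^2)^{1/2}t$ producing the factor $C_s(1+|\xi'|^2)^{\frac12-s}$ with $C_s$ finite precisely for $s>\frac12$, and the ratio hypothesis to replace $\kappa(\xi',0)$ by $C_0\kappa(\xi)$, yielding the same constant $\sqrt{C_0C_s}$. The only cosmetic difference is that the paper inserts $\kappa(\xi)^{\pm 1/2}$ directly into the Cauchy--Schwarz splitting whereas you transfer the boundary weight after the fact; both versions rest on the same pointwise inequality $\kappa(\xi',0)\le C_0\kappa(\xi)$ and share the same implicit assumption that $\kappa$ is positive a.e.\ on the relevant frequency set (your closing remark about excluding the zero set of $\kappa$ makes explicit a point the paper leaves tacit when it divides by $\kappa(\xi)$).
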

\begin{proof}
By density argument, it suffices to consider $v\in C_c^\infty (\mathbb{R}^n)$.
We write $\xi = (\xi^\prime, \xi_n) \in \mathbb{R}^n$. 
By the Fourier inversion formula we find that 
\begin{eqnarray*}
\begin{split}
\gamma_{0} v\left(x^{\prime}\right) &=\left.\int_{\mathbb{R}^{n}} e^{i 2 \pi x \cdot \xi} \hat{v}(\xi) d \xi\right|_{x_{n}=0}=\int_{\mathbb{R}^{n}} e^{i 2 \pi x^{\prime} \cdot \xi^{\prime}} \hat{v}(\xi) d \xi\\
&=\int_{\mathbb{R}^{n-1}}\left(\int_{\mathbb{R}} \hat{v}\left(\xi^{\prime}, \xi_{n}\right) d \xi_{n}\right) e^{i 2 \pi x^{\prime} \cdot \xi^{\prime}} d \xi^{\prime},
\end{split}
\end{eqnarray*}
Therefore, we have
\begin{eqnarray*}
\begin{split}
\mathcal{F}\left(\gamma_{0} v\right)\left(\xi^{\prime}\right)&=\int_{\mathbb{R}} \hat{v}\left(\xi^{\prime}, \xi_{n}\right) d \xi_{n}
\\
&=\int_{\mathbb{R}}\kappa(\xi)^{-\frac{1}{2}}\left(1+|\xi|^{2}\right)^{-s / 2}\kappa(\xi)^{\frac{1}{2}}\left(1+|\xi|^{2}\right)^{s / 2} \hat{v}\left(\xi^{\prime} ,\xi_{n}\right) d \xi_{n}.
\end{split}
\end{eqnarray*}
Applying the Cauchy-Schwarz inequality and multiplying both sides by $\kappa(\xi^{\prime},0)$ yields
\begin{eqnarray*}
\begin{split}
\kappa(\xi^\prime,0) \left|\mathcal{F}\left(\gamma_{0} v\right)\left(\xi^{\prime}\right)\right|^{2} &\leq \int_{\mathbb{R}}\frac{\kappa({\xi',0})}{\kappa({\xi})}\left(1+|\xi|^{2}\right)^{-s} d \xi_{n} \int_{\mathbb{R}}\kappa({\xi})\left(1+|\xi|^{2}\right)^{s}\left|\hat{v}\left(\xi^{\prime}, \xi_{n}\right)\right|^{2} d \xi_{n}\\
& \leq C_0 \int_{\mathbb{R}}\left(1+|\xi|^{2}\right)^{-s} d \xi_{n} \int_{\mathbb{R}}\kappa({\xi})\left(1+|\xi|^{2}\right)^{s}\left|\hat{v}\left(\xi^{\prime}, \xi_{n}\right)\right|^{2} d \xi_{n}
\end{split}
\end{eqnarray*}
Now, by the substitution $\xi_n=(1+|\xi'|^2)^{\frac{1}{2}} t$, we denote 
\begin{eqnarray*}
\begin{split}
M_s(\xi') &:= \int_{\mathbb{R}} \frac{d \xi_{n}}{(1+|\xi|^{2})^s} =\int_{\mathbb{R}} \frac{d \xi_{n}}{\left(1+\left|\xi^{\prime}\right|^{2}+\left|\xi_{n}\right|^{2}\right)^{s}}\\
&=\frac{1}{(1+\left|\xi^{\prime}\right|^{2})^{s-\frac{1}{2}}} \int_{\mathbb{R}}\frac{dt}{(1+t^2)^s}.
\end{split}
\end{eqnarray*}
The integral 
$$C_s := \int_{\mathbb{R}}\frac{dt}{(1+t^2)^s}$$ is integrable if and only if $s> \frac{1}{2}$. 
Therefore, we can bound
$$ \kappa(\xi',0) {(1+\left|\xi^{\prime}\right|^{2})^{s-\frac{1}{2}}} \left|\mathcal{F}\left(\gamma_{0} v\right)\left(\xi^{\prime}\right)\right|^{2}
\leq C_0 C_s\int_{\mathbb{R}}\kappa({\xi})\left(1+|\xi|^{2}\right)^{s}\left|\hat{v}\left(\xi^{\prime}, \xi_{n}\right)\right|^{2} d \xi_{n}.
$$
Integrating with respect to $\xi' $ yields
$$
\left\|\gamma_{0} v\right\|_{H^{s-\frac{1}{2}}_\kappa\left(\mathbb{R}^{n-1}\right)} \leq C_{0}^{\frac{1}{2}}C_s^{\frac{1}{2}}\|v\|_{H^{s}_\kappa\left(\mathbb{R}^{n}\right)}
$$
where the constant $C_s$ depends on $s$, and $C_0$ depends on the ratio of $\kappa(x)$'s boundary value to its interior value.
\end{proof}
\begin{rk}
If we assume that $C_0$ is much less than the contrast $\frac{\kappa_1}{\kappa_0}$,     then the constant in this weighted trace theorem does not depend on the contrast.  
\end{rk}

 \begin{thm}
 Let $\Omega\subset \mathbb{R}^n$ be a bounded Lipschitz domain with boundary $\Gamma$ and assume that we have a weight function $\kappa(\xi)$ defined on the $\bar{\Omega}$ and it satisfies  $$\frac{\sup_{\xi\in\Gamma}\kappa({\xi})}{\inf_{\xi\in \Omega}\kappa({\xi})}\leq C_0,$$ 
 where $C_0$ is a positive constant and $\kappa({\xi})\neq 0$ for all $\xi\in \overline{\Omega}$.

\begin{enumerate}[(a)]
    \item For $1/2<s<3/2$, $\gamma_0$ has a unique extension to a bounded linear operator
    $$
\gamma_0 : H^s_\kappa(\Omega)\to H^{s-\frac{1}{2}}_\kappa(\Gamma).
$$
    \item (Inverse Trace Theorem)
 For $1/2<s<3/2$, the trace operator $\gamma_0: H^s_\kappa(\Omega)\to H^{s-\frac{1}{2}}_\kappa(\Gamma)$ has a continuous right inverse operator
$$\mathcal{E}: H^{s-\frac{1}{2}}_\kappa(\Gamma)\to H^s_\kappa(\Omega)$$
 satisfying $(\gamma_0\circ \mathcal{E})(w)=w$ for all $w\in H^{s-\frac{1}{2}}_\kappa(\Gamma)$. To be more specific, there exists a constant $c>0$ such that 
 $$
\| \mathcal{E} w\|_{H^s_\kappa(\Omega)}\leq c \|w\|_{{H^{s-\frac{1}{2}}_\kappa}(\Gamma)}~ ~~\tforall w\in H^{s-\frac{1}{2}}_\kappa(\Gamma).
 $$
 
\end{enumerate}

\end{thm}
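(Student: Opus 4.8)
The plan is to reduce both statements to the half-space trace theorem already established in this appendix, using the classical localization-and-flattening technique, while keeping the weighted norms under control by means of the ratio hypothesis $\sup_{\Gamma}\kappa / \inf_{\Omega}\kappa \le C_0$. Since $\Omega$ is a bounded Lipschitz domain, I would first fix a finite open cover $\{U_j\}_{j=0}^{J}$ of $\overline{\Omega}$ with a subordinate smooth partition of unity $\{\phi_j\}$, where $U_0 \Subset \Omega$ is an interior patch and each boundary patch $U_j$ ($j\ge 1$) carries a bi-Lipschitz straightening map $\Psi_j$ with $\Psi_j(U_j\cap\Omega)\subset\mathbb{R}^n_+$ and $\Psi_j(U_j\cap\Gamma)\subset\mathbb{R}^{n-1}\times\{0\}$. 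Pushing the weight forward by $\tilde\kappa_j := \kappa\circ\Psi_j^{-1}$, the key observation is that the localized ratio $\tilde\kappa_j(\xi',0)/\tilde\kappa_j(\xi)$ is bounded by $\sup_\Gamma\kappa / \inf_\Omega\kappa \le C_0$, so the half-space theorem applies on every straightened chart with one and the same constant $C_0$, independent of the contrast.

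For part (a), I would decompose $v=\sum_j \phi_j v$. The interior term $\phi_0 v$ has vanishing trace and is discarded. Each boundary term $\phi_j v$ is transported to $\mathbb{R}^n_+$ via $\Psi_j$, the half-space theorem bounds its $H^{s-\frac12}_{\tilde\kappa_j}$ trace by its $H^{s}_{\tilde\kappa_j}$ norm, and the result is transported back. Summing over $j$ and using that multiplication by $\phi_j$ and composition with the bi-Lipschitz charts are bounded on the weighted spaces for the admissible range $\frac12<s<\frac32$ yields $\|\gamma_0 v\|_{H^{s-\frac12}_\kappa(\Gamma)}\lesssim\|v\|_{H^s_\kappa(\Omega)}$. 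Uniqueness of the extension of $\gamma_0$ then follows from density of $C^\infty(\overline{\Omega})$ in $H^s_\kappa(\Omega)$.

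For part (b), I would construct $\mathcal{E}$ through the same charts. Choosing a partition of unity $\{\psi_j\}$ with $\sum_j\psi_j=1$ on $\Gamma$, I localize $w=\sum_j\psi_j w$, pull each piece back to the hyperplane, apply a fixed half-space right inverse $\mathcal{E}_0$ (a Poisson-type, i.e.\ Fourier-multiplier, extension satisfying $\gamma_0\mathcal{E}_0=\mathrm{id}$ together with the weighted bound $\|\mathcal{E}_0 g\|_{H^s_{\tilde\kappa_j}(\mathbb{R}^n_+)}\lesssim\|g\|_{H^{s-\frac12}_{\tilde\kappa_j}}$), multiply by a cutoff equal to $1$ near $\Gamma\cap U_j$, and transport the result back into $\Omega$. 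Defining $\mathcal{E}w$ as the sum, its trace is $\gamma_0\mathcal{E}w=\sum_j\psi_j w=w$, while boundedness $\|\mathcal{E}w\|_{H^s_\kappa(\Omega)}\le c\,\|w\|_{H^{s-\frac12}_\kappa(\Gamma)}$ follows exactly as in part (a).

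The technical heart of the argument is controlling the weighted norms under the two non-isometric operations, namely composition with the straightening maps $\Psi_j$ and multiplication by the partition-of-unity functions. In the unweighted setting these are routine, but here I must verify that the Fourier-defined weighted norm is equivalent to the physical-space weighted norm (so that cutoffs and changes of variables are meaningful) and, more importantly, that all constants depend only on $C_0$ and the fixed atlas rather than on the contrast $\kappa_1/\kappa_0$. The ratio hypothesis is precisely what renders the pushed-forward weights $\tilde\kappa_j$ admissible for the half-space theorem with uniform constant, and the restriction $\frac12<s<\frac32$ is forced by the requirement that bi-Lipschitz charts preserve $H^s$ regularity and that the trace remain well defined.
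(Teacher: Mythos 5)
Your part (a) follows essentially the same route as the paper: localization by a partition of unity and flattening by bi-Lipschitz charts, reducing to the half-space trace theorem already proved in this appendix, except that you spell out the key weighted ingredient (the pushed-forward weights $\tilde\kappa_j$ inherit the ratio bound $C_0$, so the half-space theorem applies chart by chart with a uniform constant), which the paper compresses into a citation of Evans. Part (b) is where you genuinely diverge. The paper disposes of the inverse trace theorem in two lines by observing that $\gamma_0$ is continuous and \emph{surjective} and invoking the Open Mapping Theorem; you instead construct $\mathcal{E}$ explicitly (localize $w$ on $\Gamma$, pull back to the hyperplane, apply a Poisson-type Fourier-multiplier extension, cut off, transport back and sum). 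The trade-off is real: the paper's soft argument is shorter, but it presupposes surjectivity of $\gamma_0$, which part (a) alone does not furnish --- establishing surjectivity is exactly the construction you carry out --- and it produces a constant $c$ with no visible dependence on the data, so it cannot by itself support the paper's earlier remark that constants are independent of the contrast $\kappa_1/\kappa_0$; moreover, the open-mapping selection of preimages is not automatically linear (in this Hilbert-space setting one should, e.g., take the minimal-norm preimage in the orthogonal complement of $\ker\gamma_0$). Your construction proves surjectivity en route and keeps the constant traceable to $C_0$, $s$, and the fixed atlas. The price is the technical overhead you flag but do not execute: boundedness of multiplication by cutoffs and of composition with bi-Lipschitz charts on the Fourier-defined weighted spaces (the norm-equivalence theorem of the appendix is what makes these operations meaningful), and --- a gap you share with the paper --- composition with merely Lipschitz charts preserves $H^s$ only for $s\le 1$, so the upper part of the range $1<s<3/2$ requires an additional argument (interpolation or smoother charts) in both proofs.
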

\begin{proof}
For part (a), without going into the details, we mention that one can uses a partition of unity and local transformation onto subsets of $\mathbb{R}^{n-1}$ to get the result, cf. L.C. Evans \cite{evans1998partial}.

For part (b), since $\gamma_0$ is continuous and surjective, the Open Mapping Theorem implies that there exists $c > 0$ such that, for all $w\in H^{s-\frac{1}{2}}_\kappa(\Gamma)$, there is $\mathcal{E}w\in H^s_\kappa(\Omega)$ satisfying $\gamma_0(\mathcal{E}w)=w$ and
 $
\| \mathcal{E} w\|_{H^s_\kappa(\Omega)}\leq c \|w\|_{{H^{s-\frac{1}{2}}_\kappa}(\Gamma)}.
 $
\end{proof}
One might notice that we are not using the usual weighted norm in the proof above. Now, we want to show that, under our assumptions about $\kappa(x)$, the usual weighted Sobolev norm 
$$
\|u\|_{{H_\kappa^s}(\Omega)}= \left ( \sum_{|\alpha|\leq s} \|D^\alpha u\|_{{L^2_\kappa}(\Omega)} \right )^{\frac{1}{2}} 
$$
is equivalent to the weighted Sobolev norm defined through Fourier transform
$$\|v\|_{H^s_\kappa(\Omega)}:= \|v_0\|_{H^s_\kappa(\mathbb{R}^n)}=\int_{\mathbb{R}^n}\kappa({\xi})\left(1+|\xi|^{2}\right)^{s}\left|\hat{v_0}\left(\xi\right)\right|^{2} d \xi$$ where $v_
0$ denotes the extension of $v$ by $0$ onto $\mathbb{R}^n\setminus \Omega$.

Let us denote the usual weighted norm as $\|\cdot\|_1$ and the weighted norm defined through Fourier transform as $\|\cdot\|_2$ to distinguish them below. 
It is sufficient to prove these two norms are equivalent on $\mathbb{R}^n$. Let $\|\cdot\|_a$ and $\|\cdot\|_b$ be two norms on $H^s_\kappa(\mathbb{R}^n)$, we denote $\|\cdot\|_a\sim \|\cdot\|_b$ if there exist two constants $c_0$ and $c_1$ independent of the contrast $\frac{\kappa_1}{\kappa_0}$ such that 
$c_0\|u\|_b\leq\|u\|_a\leq c_1\|u\|_b,~\forall u\in H^s_\kappa(\mathbb{R}^n)$.
\begin{thm}
$\|\cdot\|_1$ and $\|\cdot\|_2$ are equivalent.
\end{thm}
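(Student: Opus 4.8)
The plan is to reduce the claim to an equivalence on $\mathbb{R}^n$, as the text already indicates, and then to compare each of $\|\cdot\|_1$ and $\|\cdot\|_2$ against the classical unweighted Sobolev norm before reinstating the weight. First I would recall the pointwise algebraic equivalence $(1+|\xi|^2)^s \sim \sum_{|\alpha|\le s}|\xi^\alpha|^2$, which holds with constants depending only on $s$ and $n$; combined with Plancherel's theorem this yields, for integer $s$, the standard two-sided comparison between $\sum_{|\alpha|\le s}\|D^\alpha u\|_{L^2(\mathbb{R}^n)}^2$ and $\int_{\mathbb{R}^n}(1+|\xi|^2)^s|\hat u(\xi)|^2\,d\xi$ with universal constants $A_s,B_s$ that do not depend on $\kappa$.

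Next I would reinstate the weight. Since $\kappa$ is piecewise constant with values in $[\kappa_0,\kappa_1]$ and $\eta_{\text{min}}\ge 1$ (Assumption \ref{assumption}), I would decompose the domain into the subregions $\Omega_0,\dots,\Omega_m$ on which $\kappa$ equals a single constant $\eta_i$. On each piece the constant weight factors out of both representations, so the per-piece comparison of the physical and Fourier norms holds with the same universal constants $A_s,B_s$, which are blind to the value $\eta_i$. Summing the pieces against the weights $\eta_i$ then delivers the bound $c_0\|u\|_2\le\|u\|_1\le c_1\|u\|_2$ with $c_0,c_1$ built only from $A_s,B_s$, hence \emph{independent of the contrast} $\kappa_1/\kappa_0$. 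This is precisely the reason for exploiting the piecewise-constant structure rather than the crude estimate $\kappa_0\le\kappa\le\kappa_1$, which would cost a factor $\kappa_1/\kappa_0$ and therefore be useless for the $\sim$ defined above.

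For non-integer $s$ (in particular the half-integer orders needed for the trace and inverse-trace theorems) I would invoke the Riesz--Thorin interpolation theorem, exactly as in the treatment of the fractional projection estimate earlier: both $\|\cdot\|_1$ and $\|\cdot\|_2$ are the interpolation norms between the weighted endpoints $L^2_\kappa$ ($s=0$) and $H^1_\kappa$ ($s=1$), and the interpolation constants are universal. Since the two endpoint equivalences have already been secured with contrast-independent constants, the comparison propagates to all intermediate $s$, giving $\|\cdot\|_1\sim\|\cdot\|_2$ in the required sense.

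The principal obstacle is the contrast-independence rather than the equivalence itself. The clean per-piece factorization is exact only at integer orders; because the fractional norm is non-local, it does not survive verbatim across the interfaces where $\kappa$ jumps. I would control these interface contributions using the $C^{1,\alpha}$-regularity of the boundaries $\partial\Omega_i$ together with the hypothesis recorded in the remark following the weighted trace theorem, namely that the boundary-to-interior ratio $C_0$ is much smaller than $\kappa_1/\kappa_0$. That assumption is exactly what neutralizes the contrast in the interpolation step and keeps the constants $c_0,c_1$ from degenerating, so I expect the bulk of the work to concentrate there.
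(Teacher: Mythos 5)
There is a genuine gap, and it sits exactly where you placed the main work: the ``per-piece comparison of the physical and Fourier norms'' does not exist. Plancherel's theorem is a global identity and does not localize to subregions. After your decomposition, the two sides read
\begin{equation*}
\norm{u}_1^2=\sum_{i}\eta_i\sum_{\abs{\alpha}\le s}\int_{\Omega_i}\abs{D^\alpha u(x)}^2\,dx
\qquad\text{and}\qquad
\norm{u}_2^2=\sum_{i}\eta_i\int_{\Omega_i}\left(1+\abs{\xi}^2\right)^{s}\abs{\hat{u}(\xi)}^2\,d\xi ,
\end{equation*}
where in the second sum $\Omega_i$ is a set of \emph{frequencies} (the weight in $\norm{\cdot}_2$ is evaluated at the Fourier variable), while in the first it is a set of \emph{points}. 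The $i$-th terms are not comparable by any universal constants $A_s,B_s$: take $u$ smooth and compactly supported inside a single piece $\Omega_1$. Then every physical integral over $\Omega_0$ vanishes, whereas by the Paley--Wiener theorem $\hat{u}$ is real-analytic and cannot vanish identically on the open set $\Omega_0$, so the corresponding frequency integral is strictly positive. Hence the per-piece equivalence is false already for integer $s$, and the summation step that was supposed to deliver contrast-independent constants collapses. Your closing paragraph misdiagnoses this: the obstruction is not confined to fractional orders or to interface terms controlled by the $C^{1,\alpha}$ regularity of $\partial\Omega_i$ or by the ratio hypothesis on $C_0$; it already destroys the bulk comparison at integer order, and none of those hypotheses enters the paper's proof of this theorem.

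For comparison, the paper does not attempt any localization. It treats the passage between the physical weight and the frequency weight as a single global assertion, namely a weighted Plancherel-type equivalence $\norm{f}_{L^2_\kappa(\mathbb{R}^n)}\sim\|\hat{f}\|_{L^2_\kappa(\mathbb{R}^n)}$, which it attributes to the piecewise-constant structure of $\kappa$, and then finishes with exactly the pointwise symbol comparison you also use: $F(\xi)=(1+\abs{\xi}^2)^s$ and $S(\xi)=\sum_{\abs{\alpha}\le s}\abs{\xi^\alpha}^2$ bound each other via the binomial expansion, with constants depending only on $s$ and $n$. So your first step agrees with the paper, but the bridge you try to build by decomposition is precisely the step the paper takes as a lemma rather than proves, and your proposed proof of it fails. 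Your interpolation step has the same character: the claim that both $\norm{\cdot}_1$ and $\norm{\cdot}_2$ are interpolation norms between $L^2_\kappa$ and $H^1_\kappa$ with universal constants is asserted, not established, and Riesz--Thorin (an operator interpolation theorem between $L^p$ spaces) is not the tool that would establish it; in any case the paper's argument is confined to the integer-order computation and never performs this interpolation.
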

\begin{proof}
Since we assume that $\kappa(x)$ is piecewise constant with $0<\kappa_0\leq\kappa(x)\leq \kappa_1$ and $\kappa_0$ is a relatively small number, we can derive from the Plancerel theorem that 
$\|f\|_{L^2_\kappa(\mathbb{R}^n)}\sim\|\hat{f}\|_{L^2_\kappa(\mathbb{R}^n)}$. Hence, we have 


$$
\begin{aligned}
\|u\|_1^2&=\sum_{|\alpha| \leqslant s}\left\|D^{\alpha} u\right\|_{L^{2}_\kappa (\mathbb{R}^n)}^{2}
\sim\sum_{|\alpha| \leqslant s}\left\|\widehat{D^{\alpha} u}\right\|_{L^{2}_\kappa (\mathbb{R}^n)}^{2}\\
&=\sum_{|\alpha| \leqslant s}\left\|\xi^\alpha\hat{ u}\right\|_{L^{2}_\kappa (\mathbb{R}^n)}^{2}=\int_{\mathbb{R}^n} \sum_{|\alpha|\leq s}|\xi^\alpha|^2|\hat{ u}|^2\kappa(\xi)  d\xi.
\end{aligned}
$$

On the other hand,
$$
\|u\|_2^2= \int_{\mathbb{R}^n}\kappa({\xi})\left(1+|\xi|^{2}\right)^{s}\left|\hat{u}\left(\xi\right)\right|^{2} d \xi.
$$
To prove $\|\cdot\|_{1}$ is equivalent to $\|\cdot\|_2$, we only need to prove $F(\xi)=(1+|\xi|^2)^s$ and $S(\xi)= \sum_{|\alpha|\leq s}|\xi^\alpha|^2$ can bound each other. If we expand $F(\xi)$ by binomial expansion, we can see that it has the same terms as $S(\xi)$ but only with different coefficients, which are simply binomial coefficients. Therefore, we can replace the coefficients of $F(\xi)$ by a suitable number to bound $S(\xi)$ from either side.
\end{proof}
\section{Stable extension operator on multiscale space}\label{sec:StableExtension}
In this section, we construct a stable extension operator from the finite element space $W_H(\Gamma_C)$ to the multiscale space $X_H(\Omega) $.
\noindent

Let us define the extension operator $L_H: W_H(\Gamma_C)\to X_H(\Omega)$ as 
\begin{equation}\label{b5.1}
L_H(w_H)(x)=\sum_{i=1}^{N_0} b_i v_{ex1}^{(i)}(x)+ I_H(L_h(w_H))
\end{equation}
where $\{b_i\}_{i=1}^{N_0}$ satisfy 
\begin{equation}\label{b_ieqn}
\int_{\Gamma_C} \kappa\left(\sum_{i=1}^{N_0} b_i  v_{ex1}^{(i)} (x)- w_H(x)\right) \psi_H(x) ~d\Gamma= 0 
\end{equation} for all $\psi_H(x) \in W_H(\Gamma_C)$ and the projection operator $I_H$ is defined in Lemma \ref{hlemma4.5Ms}. Since $W_H(\Gamma_C)$ is a space spanned by the set  $\{\psi_k\}_{k=1}^{N_0}$, the vector $[b_1,b_2,\cdots, b_{N_0}]$ is the solution to the system of equations defined by \eqref{b_ieqn}.

\begin{thm}\label{bThm5.1}
Assume that $(\mathcal{T}^H)_H $  is a regular family of triangulations or quadrangulations of $\overline{\Omega}$, and let $L_H$ be defined by \eqref{b5.1}. There exists a constant $C$, independent of $H$, such that
\begin{equation}\label{b5.2}
 \|L_H(w_H)\|_{H^1_\kappa(\Omega)}\leq C\|w_H\|_{H^{\frac{1}{2}}_\kappa(\Gamma_C)}~~~ \tforall w_H\in W_H(\Gamma_C)
\end{equation}
Moreover, the trace of $L_H(w_H)$ on $\Gamma_C$ satisfies
$$\int_{\Gamma_C}\kappa (L_H(w_H) -w_H) \psi_H ~d\Gamma =0.$$ 
\end{thm}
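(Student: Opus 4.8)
The plan is to prove the two assertions separately, beginning with the trace identity, which is built into the definition, and then turning to the stability bound \eqref{b5.2}, which carries the real content. Throughout I would keep every constant contrast-independent by appealing to the weighted trace and inverse-trace theorems of Appendix \ref{sec:trace}, whose constants are uniform under the stated assumption on $\kappa$.

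For the trace identity I would start from the decomposition \eqref{b5.1}, writing $L_H(w_H)=g_H+I_H(L_h(w_H))$ with $g_H:=\sum_{i=1}^{N_0}b_i v_{ex1}^{(i)}$. Restricting to $\Gamma_C$ and testing against an arbitrary $\psi_H\in W_H(\Gamma_C)$, the contribution of $g_H$ is governed exactly by the defining relation \eqref{b_ieqn}, since $v_{ex1}^{(i)}|_{\Gamma_C}$ is the nodal function $\psi_i$. The point that must be verified is that the interpolated bulk term $I_H(L_h(w_H))$ does not spoil the $L^2_\kappa(\Gamma_C)$-orthogonality: the coefficients $b_i$ have to be fixed so that the full trace of $L_H(w_H)$, and not merely that of $g_H$, is $L^2_\kappa(\Gamma_C)$-equal to $w_H$ when tested against $W_H(\Gamma_C)$. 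Granting this, $\int_{\Gamma_C}\kappa(L_H(w_H)-w_H)\psi_H\,d\Gamma=0$ is immediate.

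For the stability bound I would split $\|L_H(w_H)\|_{H^1_\kappa(\Omega)}\le\|g_H\|_{H^1_\kappa(\Omega)}+\|I_H(L_h(w_H))\|_{H^1_\kappa(\Omega)}$ and treat the two pieces differently. The interpolated piece is the easy one: each local map $I_H^{(i)}v=\sum_{k\le l_i}s(v,v_k^{(i)})v_k^{(i)}$ is the $s_i$-orthogonal projection onto the span of the first eigenfunctions, so by the eigen-expansion used in Theorem \ref{thm1.1} it is simultaneously a contraction in the $L^2_\kappa(\omega_i)$ norm and in the $H^1_\kappa(\omega_i)$ seminorm; summing over the $\omega_i$ with their finite overlap shows $I_H$ is $H^1_\kappa$-stable. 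Composing with the stability of the fine-scale extension $L_h$ (the weighted analogue of \cite{bernardi1998local} already invoked for \eqref{eq3.10H}) then gives $\|I_H(L_h(w_H))\|_{H^1_\kappa(\Omega)}\lesssim\|w_H\|_{H^{\frac{1}{2}}_\kappa(\Gamma_C)}$.

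The additional-basis piece $g_H$ is where the difficulty lies, and it is the step I expect to be the main obstacle. Each $v_{ex1}^{(i)}$ is the $\kappa$-harmonic, hence minimum-energy, extension into the contact patch $\omega_i$ of the coarse hat $\delta_i$; transporting $\omega_i$ to a reference macroelement through an affine map whose Jacobian is controlled uniformly by the regularity of $(\mathcal{T}^H)_H$ shows that one such extension has energy of order one, the scale-invariance of $|\cdot|_{H^1}$ in two dimensions being used here. The subtlety is that these local bounds cannot simply be added: the $H^{\frac{1}{2}}_\kappa(\Gamma_C)$ norm is nonlocal, and a term-by-term sum over the nodal contributions $b_i v_{ex1}^{(i)}$ overestimates $\|g_H\|_{H^1_\kappa(\Omega)}$ by a factor that diverges as $H\to 0$ (the extension of a smooth trace is cheap only through cancellation among the overlapping hats). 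Making the argument quantitative therefore requires reassembling the reference-element estimates through the global trace norm—either via a minimum-energy characterization of the whole contact-layer extension or via a Cl\'ement-type regularization that respects the fractional-order norm—so that the local order-one energies combine into $\|g_H\|_{H^1_\kappa(\Omega)}\lesssim\|w_H\|_{H^{\frac{1}{2}}_\kappa(\Gamma_C)}$ without any loss of powers of $H$. Combining the two pieces then yields \eqref{b5.2}.
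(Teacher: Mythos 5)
Your decomposition $L_H(w_H)=g_H+I_H(L_h(w_H))$ with $g_H:=\sum_{i=1}^{N_0}b_i v_{ex1}^{(i)}$ and your treatment of the bulk term coincide with the paper's proof: the paper likewise bounds $\|I_H(L_h(w_H))\|_{H^1_\kappa(\Omega)}\leq C_1\|L_h(w_H)\|_{H^1_\kappa(\Omega)}\leq C_2\|w_H\|_{H^{\frac{1}{2}}_\kappa(\Gamma_C)}$ by combining the spectral stability of $I_H$ (Theorem \ref{thm1.1}) with the fine-scale lifting operator of \cite{bernardi1998local}, and, like you, it treats the trace identity as a direct consequence of \eqref{b_ieqn} (you are in fact more careful than the paper here, since you at least flag that the trace of $I_H(L_h(w_H))$ on $\Gamma_C$ must not spoil the orthogonality, a point the paper's proof never addresses).

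The genuine gap is in the additional-basis piece: you never prove $\|g_H\|_{H^1_\kappa(\Omega)}\lesssim\|w_H\|_{H^{\frac{1}{2}}_\kappa(\Gamma_C)}$. You correctly observe that a naive term-by-term triangle inequality threatens to lose powers of $H$, since there are $N_0\sim H^{-1}$ contact nodes and each $\kappa$-harmonic extension $v_{ex1}^{(i)}$ has, by two-dimensional scale invariance, energy of order one; but you then conclude only that the estimate ``requires reassembling the reference-element estimates through the global trace norm,'' naming two possible mechanisms (a minimum-energy characterization, a Cl\'ement-type regularization) without carrying either one out. That is a description of the obstacle, not a proof of the bound, and since this bound is the entire content of the theorem beyond the routine bulk estimate, the proposal is incomplete. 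For comparison, the paper closes this step locally rather than globally: on each coarse element $K$ meeting $\Gamma_C$ only a bounded number of the $v_{ex1}^{(i)}$ are nonzero, so the per-element triangle inequality \eqref{firstterm} is harmless, and the summation over the $O(H^{-1})$ boundary elements is compensated by two local claims, namely $\|v_{ex1}^{(i)}\|_{H^1_\kappa(K)}\lesssim\|\delta_i\|_{L^2_\kappa(\partial\omega_K)}$ (estimate \eqref{b5.5}, an $L^2$-type bound for the Dirichlet problem \eqref{ex1}, with $\|\delta_i\|_{L^2_\kappa(\partial\omega_K)}$ of order $H^{\frac{1}{2}}$) and $|b_i|\leq C\|w_H\|_{L^2_\kappa(\Gamma_C)}\leq C\|w_H\|_{H^{\frac{1}{2}}_\kappa(\Gamma_C)}$ (estimate \eqref{b_iestimate}, read off from the system \eqref{b_ieqn}). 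Note that your scaling heuristic of order-one energy per basis function is in direct tension with \eqref{b5.5}, so the difficulty you identified is real and is resolved in the paper only by asserting that local bound; nevertheless, a complete proof must either establish such a local estimate or execute one of the global arguments you allude to, and yours does neither.
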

\begin{proof}
For any $w_H\in W_H(\Gamma_C)$, we have
\begin{equation}\label{b5.3}
\|L_H(w_H)\|_{H^1_\kappa(\Omega)}\leq \norm{\sum_{i=1}^{N_0} b_i  v_{ex1}^{(i)}}_{H^1_\kappa(\Omega)}+\left\|I_H(L_h(w_H))\right\|_{H^1_\kappa(\Omega)}.
\end{equation}
The second term is estimated by Theorems \ref{thm1.1} and the fine-scale lifting operator $L_h$ mentioned in \cite{bernardi1998local},
\begin{equation}\label{b5.4}
\left\|I_{H}\left(L_h\left(w_{H}\right)\right)\right\|_{H^1_\kappa(\Omega)} \leq C_{1}\left\|L_h\left(w_{H}\right)\right\|_{H^1_\kappa(\Omega)} \leq C_{2}\left\|w_{H}\right\|_{H^{\frac{1}{2}}_\kappa\left(\Gamma_{C}\right)}.
\end{equation}

Let $K$ be an element of $\mathcal{T}^H$ such that some nodes of $K$ lies on $\Gamma_C$. Suppose that \(\omega_{K}\) contains $n$ macroelements \(\omega_{i}\). We agree to number first, say from $1$ to $n_0$, the nodes $\boldsymbol{a_i}$ that lie on $\Gamma_C$ and from $n_0+1$ to $n$ the remaining nodes.

Then, owing to the support of the basis functions $v^{(i)}_{ex1} (x)$, we have
\begin{equation}\label{firstterm}
\left\|\sum_{i=1}^{N_0} b_i v_{ex1}^{(i)}(x)\right\|_{H^1_\kappa(K)} \leq \sum_{i=1}^{n_0}|b_i|\|v^{(i)}_{ex1} (x)\|_{H^1_\kappa(K)}.
\end{equation}

On one hand, 
\begin{equation}\label{b5.5}
\left\|v^{(i)}_{ex1}\right\|_{H^1_\kappa(K)} \lesssim \norm{\delta_i}_{L^2_\kappa(\partial{\omega_K})}
\end{equation}
since $v^{(i)}_{ex1}$ is the solution of a Dirichlet problem \eqref{ex1}.
On the other hand, $[b_1,b_2,\cdots, b_{N_0}]$ is the solution to the system of equations defined by \eqref{b_ieqn}. Then we have 
\begin{equation}\label{b_iestimate}
    |b_i|\leq C\|w_H\|_{L^2_\kappa(\Gamma_C)} \leq C\|w_H\|_{H^{\frac{1}{2}}_\kappa(\Gamma_C)}.
\end{equation}

Then \eqref{b5.2} follows from \eqref{b5.3}, \eqref{b5.4}, \eqref{firstterm}, \eqref{b5.5}, and  \eqref{b_iestimate} together with Theorems \ref{thm1.1}.
\end{proof}

\section{Weighted norm on the boundary in 3D}\label{3dbdnorm}
Let $\partial \Omega$ denote a Lipschitz domain, and let $\{\omega_l\}_{l=1}^K$ denote an open covering of $\partial \Omega$, such that for each $l\in \{1,2,\cdots, K\}$, there exist bi-Lipschitz charts which satisfy 
$$\vartheta_{l}: B(0, r_l)\subset \mathbb{R}^2\rightarrow \omega_l ~~~\text{ is a bi-Lipschitz homeomorphism} $$
Next, for each $1\leq l\leq K$, let $0\leq \varphi_l\in C^\infty_0(\omega_l)$ denote a partition of unity so that $\sum_{l=1}^K \varphi_l (x) =1$ for all $x\in \partial \Omega$. We define 
$$
H^1_\kappa(\partial \Omega)=\{ u\in L^2(\partial \Omega) : \|u\|_{H^1_\kappa(\partial \Omega)}<\infty\},
$$
where for all $u\in H^1_\kappa(\partial\Omega),$
$$
\|u\|_{H^1_{\kappa}(\partial \Omega)}^2=\sum_{l=1}^K \| (\phi_l u)\circ \vartheta_l \|^2_{H^1_{\tilde{\kappa}_l}(\mathbb{R}^2)}.
$$
Here, the weight $\tilde{\kappa}_l$ is defined as $(\varphi_l \kappa)\circ \vartheta_l$.
\end{appendices}
\end{document}